\newcommand\version{\today}     
\newtheorem{theorem}{Theorem}[section]
\newtheorem{proposition}[theorem]{Proposition}
\newtheorem{lemma}[theorem]{Lemma}
\newtheorem{claim}{Claim}[section]
\theoremstyle{definition}
\theoremstyle{remark}
\newtheorem{remarks}[theorem]{Remarks}
\numberwithin{equation}{section}
\newcommand{\be}{\begin{equation*}}
\newcommand{\ee}{\end{equation*}}
\newcommand{\beq}{\begin{equation}}
\newcommand{\eeq}{\end{equation}}
\newcommand{\begincal}{\begin{eqnarray*}}
\newcommand{\fincal}{\end{eqnarray*}}
\newcommand{\ds}{\displaystyle}
\renewcommand{\epsilon}{\varepsilon}
\newcommand{\eps}{\varepsilon}
\newcommand{\N}{\mathbb{N}}
\newcommand*\diff{\mathop{}\!\mathrm{d}} 
\newcommand{\R}{\mathbb{R}}
\newcommand{\ieps}{{i,\eps}}
\newcommand{\jeps}{{j,\eps}}
\newcommand{\oeps}{{1,\eps}}
\newcommand{\ue}{u_\eps}
\newcommand{\tue}{\tilde{u}_\varepsilon}
\newcommand{\hue}{\hat{u}_\varepsilon}
\newcommand{\de}{d_\varepsilon}
\newcommand{\io}{\int_{\Omega}}
\newcommand{\ibo}{\int_{\partial\Omega}}
\begin{document}

\title[Multibubble analysis in the higher dimensional BN problem]{Fine multibubble analysis in the higher-dimensional Brezis--Nirenberg problem}

		\author{Tobias König}
	\address[Tobias König]{Institut für Mathematik, 
Goethe-Universität Frankfurt, 
Robert-Mayer-Str. 10, 60629 Frankfurt am Main, Germany}	 
\email{koenig@mathematik.uni-frankfurt.de}

\author{Paul Laurain}
\address[Paul Laurain]{
Institut de Mathématiques de Jussieu, Université de Paris, Bâtiment Sophie Germain, Case 7052, 75205
Paris Cedex 13, France \& DMA, Ecole normale supérieure, CNRS, PSL Research University, 75005 Paris.}
\email{paul.laurain@imj-prg.fr}

\thanks{\copyright\, 2022 by the authors. This paper may be reproduced, in its entirety, for non-commercial purposes.\\
\emph{Date}: \version \\
Partial support through ANR BLADE-JC ANR-18-CE40-002 is acknowledged. 
The authors are grateful to Shuibo Huang for valuable comments on our work \cite{KL}, which also led to the improvement of this manuscript.  
}

\begin{abstract}
For a bounded set $\Omega \subset \R^N$ and a perturbation $V \in C^1(\overline{\Omega})$, we analyze the concentration behavior of a blow-up sequence of positive solutions to  
\[ -\Delta u_\eps + \eps V = N(N-2) u_\eps^\frac{N+2}{N-2} \]
for dimensions $N \geq 4$, which are non-critical in the sense of the Brezis--Nirenberg problem.

For the general case of multiple concentration points, we prove that concentration points are isolated and characterize the vector of these points as a critical point of a suitable function derived from the Green's function of $-\Delta$ on $\Omega$. Moreover, we give the leading order expression of the concentration speed. This paper, with a recent one by the authors \cite{KL} in dimension $N = 3$, gives a complete picture of blow-up phenomena in the Brezis-Nirenberg framework.

\end{abstract}

\maketitle

\section{Introduction and main results}

For $N \geq 4$, let $\Omega \subset \R^N$ be a bounded open set, and let $u_\eps$ be a sequence of solutions to 
\begin{align}
-\Delta u_\eps + \eps V u_\eps &= N(N-2)  u_\eps^\frac{N+2}{N-2}
 \qquad \text{ on } \Omega, \nonumber \\
 u_\eps &> 0 \qquad \text{ on } \Omega, \label{brezis peletier additive}  \\
 u_\eps &= 0 \qquad \text{ on } \partial \Omega. \nonumber
\end{align}
For the perturbation profile $V$, the canonical choice is $V \equiv -1$, but we will only assume $V \in C^1(\overline{\Omega})$ and $V < 0$ on $\overline{\Omega}$ throughout this paper. The understanding of the behavior of solutions of this equation is pivotal in the Yamabe problem, see for instance \cite{DH05} and reference therein.

Existence and non-existence of solutions to \eqref{brezis peletier additive} is a delicate matter and has been investigated in a famous paper by Brezis and Nirenberg \cite{Brezis1983}. This is largely due to the Sobolev-critical value of the exponent $\frac{N+2}{N-2}=2^*-1$, which allows concentration of a sequence of solutions around one or even several points of $\Omega$. Starting with \cite{Atkinson1987, Budd1987} and particularly an influential paper by Brezis and Peletier \cite{Brezis1989}, in the latter, after studying the behaviour of radial solution, the authors conjecture an asymptotic expression for $\Vert u_\eps\Vert_\infty$ in the case where $(u_\eps)$ has precisely one blow-up point. The present paper, with \cite{KL}, completely settles this long-standing open question by giving the precise behavior of arbitrary sequences of solutions, notably ones with multiple concentration points.

For one-peak solutions and $N \geq 4$ the location and speed of concentration have been characterized in \cite{Rey1989, Han1991} for $V \equiv -1$ and in \cite{Molle2003} for non-constant $V$. For the  related subcritical problem, with $V \equiv 0$ and $u_\eps^{\frac{N+2}{N-2}-\eps}$ on the right side of \eqref{brezis peletier additive}, the properties of multi-peak solutions have been analyzed in  \cite{Rey1992, Bahri1995, Rey1999}. In the latter, the authors always assume that the number of concentration points is {\it a priori } finite, which is not the case in the present paper and \cite{KL}.

Conversely, besides the one-peak solutions arising as energy-minimizers from \cite{Brezis1983}, we mention that multi-peak solutions with various properties have been constructed e.g. in \cite{Musso2002, delPino2004, Musso2018, Premoselli2022}.

When $N=3$, even in the presence of only one concentration point, the leading order of the speed at which blow-up solutions to \eqref{brezis peletier additive} concentrate is harder to obtain.\footnote{To be completely precise, for $N=3$ the relevant equation fulfilled by a blowing-up sequence of solutions is $-\Delta u_\eps + (a+\eps V) u_\eps = 3 u_\eps^5$, with a non-zero $a \in C(\overline{\Omega})$ as a consequence of the Brezis--Nirenberg dimensional effect observed in \cite{Brezis1983}.} This is due to a certain cancellation in the energy expansion which forces one to push the asymptotic analysis to a higher degree of precision. The results analogous to \cite{Rey1989, Han1991} for one-peak solutions have been obtained only recently, by the first author and collaborators in a series of papers \cite{Frank2021, Frank2019b, Frank2021b}. The full analysis for $N=3$ comprising multi-peak solutions has been carried out by the authors of the present paper in the recent preprint \cite{KL}. 

Finally, the blow-up of solutions to \eqref{brezis peletier additive} in the case $N \geq 4$ has not been studied in the literature yet, notably because the fine analysis of the concentration points was not available, which is done in Appendix \ref{appendixB}. The goal of the present paper is to close this gap, using and adapting the new methods of \cite{KL}. Remarkably, differently from one-peak solutions in dimension $N \geq 4$, the multi-peak case can also feature a cancellation phenomenon which makes it harder to derive the concentration speed. We  will explain this in more detail in the following subsection, where we state our main result. 

\subsection{Main result}

Let us introduce the object that largely governs the asymptotic behavior of $(u_\eps)$, namely the Green's function $G: \Omega \times \Omega \to \R$. This is the unique function satisfying, for each fixed $y\in\Omega$,
\begin{equation} \label{Ga-pde}
\left\{
\begin{array}{l@{\quad}l}
-\Delta_x\, G(x,y) =  \delta_y & \quad \text{in} \ \ \Omega\,, \\
G(\cdot,y) = 0  & \quad \text{on} \ \ \partial\Omega \,.
\end{array}
\right.  
\end{equation}
Note that $G(x,y) > 0$ for every $x,y \in \Omega$. 
The regular part $H$ of $G$ is defined by 
\begin{equation} \label{ha-def}
H(x,y) := \frac{1}{(N-2)\omega_{N-1} |x-y|^{N-2}} - G(x,y)\, ,
\end{equation}
where $\omega_{N-1}$ is the volume of the sphere $\mathbb S^{N-1} \subset \R^N$. It is well-known that for each $y\in\Omega$ the function $H(\cdot,y)$ is a smooth function in $\Omega$. Thus we may define the \emph{Robin function}
$$
\phi(y) := H(y,y) \,.
$$
It is known that single-blow-up sequences of solutions to \eqref{brezis peletier additive} must concentrate at critical points $x_0$ of $\phi$ when $V$ is constant \cite{Brezis1989, Rey1989, Han1991} and of a suitable function depending on $\phi$ and $V$ when $V$ is non-constant \cite{Molle2003}.

For any number $n \in \N$ of concentration points, let 
\[ \Omega_\ast^n := \{ \bm{x}= (x_1,...,x_n)\in \Omega^n \, : \, x_i \neq x_j \text{  for all  } i \neq j \}. \]
 For $\bm{x} \in \Omega_\ast^n$ we denote $M(\bm{x}) \in \R^{n \times n} = (m_{ij})_{i,j=1}^n$ the matrix with entries 
\begin{equation}
\label{m ij definition}
m_{ij}(\bm{x}) := 
\begin{cases}
\phi(x_i) & \text{ for } i = j, \\
- G(x_i, x_j) & \text{ for } i \neq j. 
\end{cases}
\end{equation}
Its lowest eigenvalue $\rho(\bm{x})$ is simple and the corresponding eigenvector can be chosen to have strictly positive components. We denote by $\bm{\Lambda}(\bm{x}) \in \R^n$ the unique vector such that 
\[ M(\bm{x}) \cdot \bm{\Lambda}(\bm{x}) = \rho(\bm{x})\bm{\Lambda}(\bm{x}) , \qquad (\bm{\Lambda}(\bm{x}) )_1 = 1. \] 

Next, let us define, for $\bm{\kappa} \in (0,\infty)^n$ and $\bm{x} \in \Omega_*^n$, 
\begin{equation}
\label{F definition}
F(\bm{\kappa}, \bm{x}) := \frac 12 \langle \bm{\kappa}, M(\bm{x}) \bm \kappa \rangle  +  d_N \frac{N-2}{4} \sum_i V(x_{i}) \kappa_i^{\frac{4}{N-2}}
\end{equation}
 where the dimensional constant $d_N > 0$ is given by 
 \begin{equation}
 \label{d_N constant}
 d_N = \frac{\Gamma(\frac{N}{2}) \Gamma(\frac{N-4}{2})}{\Gamma(N-1) \omega_{N-1} (N-2)^2}. 
\end{equation}  

Moreover, we define the Aubin--Talenti type bubble function 
\[ B(x):= \left(1 + |x|^2 \right)^{-\frac{N-2}{2}} \]
and, for every $\mu > 0$ and $x_0 \in \R^N$ its rescaled and translated versions 
\[ B_{\mu, x_0}(x) = \mu^{-\frac{N-2}{2}} B\left( \frac{x - x_0}{\mu} \right) = \frac{\mu^{\frac{N-2}{2}}}{(\mu^2 + |x-x_0|^2)^{\frac{N-2}{2}}}. \]
We notice that $B_{\mu, x_0}$ satisfies $-\Delta B_{\mu, x_0} = N(N-2) B_{\mu,x_0}^\frac{N+2}{N-2}$ on $\R^N$, for every $\mu > 0$ and $x_0 \in \R^N$. 

For multiindices $\alpha \in \N_0^2$, we consider functions $W_{\alpha}$ which satisfy
\begin{equation}
\label{Wjk equation}
-\Delta W_{\alpha} - N(N+2) B^\frac{4}{N-2} W_{\alpha} = f_\alpha \quad \text{ on } \R^N, \quad W_{\alpha}(x) = \frac{1}{\alpha!} x^\alpha + o(|x|^2), 
\end{equation}
with 
\begin{equation}
\label{fjk}
f_\alpha = \begin{cases}
0 & \text{ if } x^\alpha = x_j x_k \, \text{ for some } j \neq k, \\
-B & \text{ if } x^\alpha = x_j^2 \, \text{ for some } j
\end{cases}
\end{equation} 
We construct these functions in Lemma \ref{lemma Wjk} below. 

Here is our main result. 

\begin{theorem}
\label{theorem multibubble}
Let $(u_\eps)$ be a sequence of solutions to \eqref{brezis peletier additive}, with $V \in C^1(\overline{\Omega})$ and $V < 0$ on $\overline{\Omega}$, such that $\|u_\eps\|_\infty \to \infty$. Then there exists $n \in \N$ and $n$ sequences of points $x_\oeps,...,x_{n,\eps} \in \Omega$ such that $x_\ieps \to x_{i,0} \in \Omega$,  $\mu_\ieps:= u_\eps(x_\ieps)^{-\frac{2}{N-2}} \to 0$ as $\eps \to 0$, $\nabla u_\eps (x_\ieps) = 0$ for every $\eps > 0$ and $u_\eps \to 0$ uniformly away from $x_1,...,x_n$. The ratio $\lambda_\ieps:= \left(\frac{\mu_\ieps}{\mu_\oeps}\right)^\frac{N-2}{2}$ has a finite, non-zero limit $\lambda_{i,0} \in (0, \infty)$. 

Moreover, the following holds. 
 
\begin{enumerate}[(i)]
 \setcounter{enumi}{0}
\item \label{item local} \textbf{Refined local asymptotics:} For any $i= 1,...,n$, denote $B_\ieps := B_{\mu_\ieps, x_\ieps}$ and 
\begin{equation}
\label{Wieps definition}
W_\ieps(x) :=  \mu_\ieps^2 \sum_{\alpha \in \N_0^N: |\alpha|=2}  W_{\alpha}\left( \frac{x - x_\ieps}{\mu_\ieps} \right) \partial_{\alpha} \left( u_\eps - B_\ieps \right)(x_\ieps). 
\end{equation}
Then  for $\delta >0$ small enough, and every $\nu \in (2,3)$,
\[ |(u_\eps - B_\ieps -  W_\ieps)(x)| \lesssim \left( \eps \mu_\eps^{-\frac{N}{2} + 4 - \nu} + \mu_\eps^\frac{N-2}{2}  \right) |x-x_\ieps|^\nu  \] 
for all $x \in B(x_\ieps, \delta)$. 
\item \label{item blowup} \textbf{Blow-up rate:} 
The matrix $M(\bm{x}_0)$ is semi-positive definite with simple lowest eigenvalue $\rho(\bm{x}_0) \geq 0$.

\begin{itemize}
\item Suppose $\rho(\bm x_0) > 0$. If $N \geq 5$, then 
\begin{equation}
\label{mu asymptotics thm}
\lim_{\eps \to 0} \eps \mu_\ieps^{-N+4}  =: \kappa_{i,0}^{-2 \frac{N-4}{N-2}} 
\end{equation}
exists and lies in $(0, \infty)$. Moreover, $(\bm \kappa_0, \bm x_0)$ is a critical point of $F(\bm \kappa, \bm x)$ defined in \eqref{F definition}. If $N \geq 6$, then $\bm \kappa_0$ is the unique critical point of $F(\cdot, \bm x_0)$.

If $N = 4$, then for every $i$, 
\begin{equation}
\label{mu asymptotics thm N=4}
\lim_{\eps \to 0} \eps \ln (\mu_\ieps^{-1})  = \kappa_0,
\end{equation} 
where $\kappa_0 > 0$ is the unique number such that $M~-~\kappa_0 \text{ diag}(\frac{1}{8 \pi^2} |V(x_{i,0})|)$ has its lowest eigenvalue equal to zero. Moreover, $(\bm \lambda_0, \bm x_0)$ is a critical point of 
\begin{equation}
\label{tilde F definition}
\tilde{F}(\bm \lambda, \bm x) = \frac 12 \langle \bm{\lambda}, M(\bm{x}) \bm \lambda \rangle  +  \frac{\kappa_0}{2} \frac{1}{8 \pi^2} \sum_i V(x_{i}) \lambda_i^{2}.
\end{equation}

\item If $\rho(\bm x_0) = 0$, then also $\nabla \rho(\bm x_0) = 0$. Moreover,
\begin{align}
\label{mu asymptotics thm degenerate case}
\lim_{\eps \to 0} \eps \mu_\ieps^{-N+4}  &= \mathcal O(\mu_\eps^2) \quad \text{ if } N \geq 5 \, \text{ and } \\
\label{mu asymptotics thm degenerate case N =4}
\lim_{\eps \to 0} \eps \ln (\mu_\eps^{-1})& = \mathcal O(\mu_\eps^2) \quad \text{ if } N = 4,
\end{align} 
and $\ds \Lambda_{i,0} = \lambda_{i,0} =  \lim_{\eps \to 0} \left(\frac{\mu_\ieps}{\mu_\oeps}\right)^\frac{N-2}{2}$.\\
 
Furthermore, we have the quantitative bounds
\[
 \rho(\bm{x}_\eps) = 
 \begin{cases}
o(\eps \mu_\eps^{-N+4} + \mu_\eps^{2}) & \text{ if } N \geq 5, \\
o(\eps \ln (\mu_\eps^{-1}) + \mu_\eps^{2})  & \text{ if } N = 4,
\end{cases} 
\]
and, for every $\delta > 0$,
\[
|\nabla \rho (\bm{x}_\eps)| \lesssim \mu_\eps^{2-\delta}. 
\] 

\end{itemize}

\end{enumerate}   
\end{theorem}

\begin{remarks}
\begin{enumerate}[(a)]
\item 
In order to keep the statement of our theorem reasonable, in the refined local asymptotics, we just give the expansion up to the first term after the bubble. But, in fact we can go further, as shown by Proposition \ref{proposition p}. More precisely, our technique, which consists in subtracting recursively a suitable solution of the inhomogeneous linearized equation, can give bounds on remainder terms $q_\ieps^{(l)}$ \emph{of arbitrary order $l \in \N_0$}. Let us sketch the general framework. Indeed, set $q_\ieps^{(0)} = u_\eps - B_\ieps$ and define recursively
\[ W_\ieps^{(l+1)} := \sum_{\alpha \in \N_0^N \, : \, |\alpha| = l+2} W^{(l+1)}_\alpha\left(\frac{x- x_\ieps}{\mu_\ieps}\right) \partial_\alpha (q_\ieps^{(l)}(x_\ieps)), \]
where $W^{(l+1)}_\alpha$ is a solution to 
\[ (-\Delta - N(N+2)B^\frac{4}{N-2}) W^{(l+1)}_\alpha = f_\alpha(x, W^{(1)}, ..., W^{(l)}) \quad \text{ on } \R^N \]
with 
\[ W^{(l+1)}_\alpha(x) = \frac{1}{\alpha!} x^\alpha + o(|x|^{l+2}) \quad \text{ as } x \to 0. \]
Here, the inhomogeneities $f_\alpha$ are determined recursively from the equation satisfied by the remainder term. Then by carrying out the technique used in Section \ref{section asymptotic analysis}, the remainder terms 
\[ q_\ieps^{(l)} = u - B_\ieps - \sum_{k=1}^l W_\ieps^{(k)} \]
can be expected to satisfy the estimate 
\[ 
|q_\ieps(x)| \lesssim \left( \eps \mu_\eps^{-\frac{N}{2} + 3+l - \nu} + \mu_\eps^\frac{N-2}{2}  \right) |x-x_\ieps|^\nu  \] 
for all $x \in B(x_\ieps, \delta)$ and $\nu\in (l+1,l+2)$. We carry this program out rigorously for $l=1,2$ in this paper; see Propositions \ref{proposition q} and \ref{proposition p}. 

\item A remarkable fact in Theorem \ref{theorem multibubble} is to improve the asymptotic bounds on the blow-up speed in \eqref{mu asymptotics thm degenerate case} and \eqref{mu asymptotics thm degenerate case N =4} in the degenerate case when $\rho(\bm x_0) = 0$. Indeed, in this case (and only then) the first term on the right side of the expansions \eqref{expansion M proposition} resp. \eqref{expansion M proposition N =4} cancels, as shown in Section \ref{section proof of Theorem}. Our analysis of the error terms is fine enough to push the estimates further by a factor of $\mu_\eps^2$ in the expansions  \eqref{expansion M proposition} and \eqref{expansion M proposition N =4}. 

This should in particular be compared with the analysis of the related equation $-\Delta u_\eps = u_\eps^{\frac{N+2}{N-2}-\eps}$ in \cite{Bahri1995}, where in the case $\rho(\bm x_0) = 0$ no improved asymptotics are derived. 

\item We also point out that in the case $n=1$ of only one concentration point $x_0 \in\Omega$, one simply has $\rho(x_0) = \phi(x_0) > 0$ by the maximum principle. Thus the possibility that $\rho(\bm x_0) = 0$ is indeed particular to the multi-peak case. 

In the case where $\Omega$ is convex, it is known \cite[Theorem 2.7]{Grossi2010} that no multiple blow-up can happen. Under the weaker assumption that $\Omega$ is star-shaped with respect to some $y_0 \in \Omega$, the same is not known. However, a simple argument shows that if multiple blow-up does happen for $\Omega$ star-shaped, we must always be in the non-degenerate case $\rho(\bm x_0) > 0$. Indeed, by Pohozaev's identity we have 
\[ -\eps \mu_{1,\eps}^{-N+2} \int_\Omega (2 V(x) + \nabla V(x) \cdot (x-y_0)) u_\eps^2 \diff x = \mu_{1, \eps}^{-N+2} \int_{\partial \Omega} \left| \frac{\partial u_\epsilon}{\partial n}\right|^2 (x - y_0) \cdot n \diff x . \]
By Proposition \ref{proposition preliminaries multi}.(v) below, the right side converges to $\int_{\partial \Omega} \left| \frac{\partial G_{\bm x, \bm \nu}}{\partial n}\right|^2 (x - y_0) \cdot n \diff x > 0$. On the other hand, by standard calculations as in the proof of Proposition \ref{proposition expansion}, the left side is equal to 
\begin{equation}
\label{sum Vj}
\begin{cases}
-\eps \mu_{1,\eps}^{-N+2} c_N \sum_j V(x_\jeps) \mu_\jeps^2 + o(\mu_\eps^2) & \text{ if } N \geq 5 , \\
-\eps \mu_{1,\eps}^{-2} c_4 \sum_j V(x_\jeps) \mu_\jeps^2 \ln (\mu_\jeps^{-1}) & \text{ if } N = 4. 
\end{cases} 
\end{equation} 
Since $V < 0$ by assumption and all the $\mu_\jeps$ are comparable by Proposition \ref{proposition preliminaries multi}, the left hand side is equal to a positive constant times $\eps \mu_\eps^{-N+4}$ if $N \geq 5$, respectively $\eps \ln (\mu_\eps^{-1})$ if $N = 4$. Since we have seen that the right side is strictly positive, the quantities $\eps \mu_\eps^{-N+4}$, resp. $\eps \ln (\mu_\eps^{-1})$, must have a strictly positive limit. In particular, $\rho(\bm x_0) > 0$ by Theorem \ref{theorem multibubble}. 

\item One may ask whether our hypothesis that $V < 0$ on $\overline{\Omega}$ can be further relaxed. Concerning this question, a few comments are in order. Firstly, if $\Omega$ is star-shaped, then by Pohozaev's identity as in Remark (c) it is clear that for $V \equiv \text{const. } > 0$ there cannot be a solution $u_\eps$ to \eqref{brezis peletier additive}. For non-constant $V$, the situation is less clear. Still for star-shaped $\Omega$, say, the quantity \eqref{sum Vj} seems to suggest that at some blow-up points $x_j$, positive values $V(x_j) > 0$ might be allowed as long as they are compensated for by others. On the other hand, we are not aware of examples in the literature for a blow-up pattern different from that of Theorem \ref{theorem multibubble} (e.g. by exhibiting unbounded energy, clusters of concentration points and/or concentration on the boundary) in a situation where $V$ is not strictly negative. We point out that both our a priori analysis in Appendix \ref{appendixB} and the proof of our main results in  Section \ref{section proof of Theorem} require that $V < 0$ everywhere, independently from each other. 

\item Surprisingly, the concentration speed is uniquely determined in terms of $\Omega$, $V$, $n$ and $x_0$ in dimensions $N=4$ and $N \geq 6$, but not $N = 5$. Indeed, in that case we cannot exclude that the function $F$ may fail to be convex. 
\end{enumerate}
\end{remarks}

The structure of the rest of this paper is as follows. In Section \ref{section asymptotic analysis}, starting from some qualitative information about the blow-up of $u_\eps$, we derive very precise pointwise bounds on $u_\eps$ near the concentration points, which form the technical core of our method. These are used in turn to derive the main energy expansions in Section \ref{section main expansion}. Once these are established, the proof of Theorem \ref{theorem multibubble} can be concluded in Section \ref{section proof of Theorem} by rather soft argument. We have added several appendices in an attempt to make the analysis self-contained.

\section{Asymptotic analysis}
\label{section asymptotic analysis}
We start with some by now classical estimates, which says that a blowing-up sequence can only develop finitely many bubbles and the solutions are controlled by the bubble. Here the hypothesis $V<0$ plays a crucial role. This kind of analysis has been initiated by Druet, Hebey and Robert \cite{Druet2004} on a manifold. In the domain case an extra difficulty occurs since we have to avoid concentration near the boundary. This has alredy been done in dimension $N=3$ by Druet and the second author \cite{Druet2010} in a similar context. In higher dimension $N\geq 4$ the proof is largely analogous. We give it in Appendix B, for the sake of completeness and in the hope of providing a useful future reference for the case of a domain $\Omega$.

\begin{proposition}
\label{proposition preliminaries multi}
Let $(u_\eps)$ be a sequence of solutions to \eqref{brezis peletier additive} such that $\Vert u_\eps\Vert_\infty \rightarrow +\infty$. Then, up to extracting a subsequence, there exists $n \in \N$ and points $x_{1,\eps},..., x_{n,\eps}$ such that the following holds. 
\begin{enumerate}[(i)]
\item $x_\ieps \to x_i \in \Omega$ for some $x_i \in \Omega$ with $x_i \neq x_j$ for $i \neq j$. 
\item $\mu_\ieps := u_\eps(x_\ieps)^{-\frac{2}{N-2}} \to 0$ as $\eps \to 0$ and $\nabla u_\eps(x_\ieps) = 0$ for every $i$. 
\item \label{item lambda i} $\displaystyle \lambda_{i,0} := \lim_{\eps \to 0} \lambda_\ieps :=  \lim_{\eps \to 0} \frac{\mu_\ieps^\frac{N-2}{2}}{\mu_\oeps^\frac{N-2}{2}}$ exists and lies in $(0, \infty)$ for every $i$. 
\item \label{item u to bubble} $\mu_{i,\eps}^{\frac{N-2}{2}} u_\eps(x_{i,\eps} + \mu_{i,\eps} x) \to B$ in $C^1_\text{loc}(\R^N)$. 
\item \label{item tilde G} There are $\nu_i > 0$ such that $\displaystyle \mu_{1,\eps}^{-\frac{N-2}{2}} u_{\eps} \to  \sum_{i=1}^{n} \nu_i G(x_{i,\eps}, \cdot) =: G_{\bm{x},\bm{\nu}}$ uniformly in $C^1$ away from $\{x_1,...,x_n\}$.
\item There is $C > 0$ such that $ u_\eps \leq C \sum_{i=1}^{n} B_\ieps$ on $\Omega$.  Moreover, on every compact subset of $\Omega$, there is $C > 0$ such that  $\frac{1}{C} \sum_{i=1}^{n} B_\ieps \leq u_\eps$.
\end{enumerate}
\end{proposition}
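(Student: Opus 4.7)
The approach is the iterative blow-up extraction of Druet--Hebey--Robert \cite{Druet2004}, adapted to a bounded domain in the spirit of \cite{Druet2010}. The argument decomposes into three stages: selection of the concentration points by rescaling and energy quantization, the two-sided pointwise bound in (vi), and extraction of the Green's function limit in (v).

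For the selection I take $x_{1,\eps}$ to be a maximum point of $u_\eps$, set $\mu_{1,\eps} := u_\eps(x_{1,\eps})^{-2/(N-2)}$, and rescale $v_\eps(y) := \mu_{1,\eps}^{(N-2)/2} u_\eps(x_{1,\eps} + \mu_{1,\eps} y)$, which satisfies the same equation with perturbation coefficient $\eps \mu_{1,\eps}^2 V(x_{1,\eps} + \mu_{1,\eps} \cdot)$. Since this coefficient vanishes on compact sets, the Caffarelli--Gidas--Spruck classification of positive entire solutions of the critical equation yields $v_\eps \to B$ in $C^1_{\text{loc}}(\R^N)$. I would then iterate: given $x_{1,\eps},\ldots,x_{k,\eps}$, set $w_\eps(x) := \min_j |x - x_\jeps|^{(N-2)/2} u_\eps(x)$ and, as long as $\|w_\eps\|_\infty \to \infty$, add its maximizer as $x_{k+1,\eps}$. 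The procedure terminates in finitely many steps because each bubble contributes a fixed mass $\int_{\R^N} B^{2N/(N-2)}$ to the uniformly bounded $L^{2N/(N-2)}$-norm of $u_\eps$ (obtained by testing the equation against $u_\eps$ and Sobolev embedding). The scale comparability (iii) and the separation $x_i \neq x_j$ follow from the selection: if $\mu_\ieps/\mu_\jeps \to 0$ while $x_\ieps$ and $x_\jeps$ tend to the same point, rescaling around $x_\jeps$ at scale $\mu_\jeps$ would place $x_\ieps$ at bounded distance with diverging rescaled height, contradicting the $C^1_{\text{loc}}$ bubble limit at $x_\jeps$. Excluding accumulation on $\partial\Omega$ in (i) exploits the sign assumption $V<0$ through a Pohozaev-type computation near the boundary, as in \cite{Druet2010}.

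The technical heart is the pointwise upper bound $u_\eps \leq C \sum_i B_\ieps$. I would argue by contradiction: if $u_\eps(y_\eps)/\sum_i B_\ieps(y_\eps) \to \infty$ along some sequence $y_\eps \in \Omega$, rescaling around $y_\eps$ at scale $u_\eps(y_\eps)^{-2/(N-2)}$ produces in the limit a positive entire solution, hence a new bubble; this either contradicts the termination of the selection procedure or the fact that $y_\eps$ cannot coincide with any previously selected $x_\ieps$ while the height ratio diverges. The sign $V<0$ provides the coercivity needed to control the linearized operator through the iteration. The matching lower bound then follows from the Green's function representation
\[ u_\eps(x) = \int_\Omega G(x,y) \bigl[N(N-2) u_\eps^{(N+2)/(N-2)} - \eps V u_\eps\bigr](y)\,\diff y \]
combined with the $C^1_{\text{loc}}$ bubble limits which lower-bound the dominant integrand near each $x_\ieps$. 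Granted (vi), the Green's function limit in (v) is immediate: $\mu_{1,\eps}^{-(N-2)/2} u_\eps$ is bounded in $C^1_{\text{loc}}(\Omega \setminus \{x_1,\ldots,x_n\})$, and the right-hand side of the linear equation it satisfies concentrates as measures at the $x_i$'s with weights proportional to $\lambda_{i,0}^{N-2}$, so standard elliptic regularity delivers the limit $\sum_i \nu_i G(x_i, \cdot)$, with the boundary condition inherited from $u_\eps|_{\partial\Omega} = 0$.

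The main obstacle I anticipate is the scale-by-scale control of $u_\eps$ across all intermediate regions between $\max_i \mu_\ieps$ and the macroscopic scale: chaining annular estimates while simultaneously ruling out spurious intermediate concentration and loss of compactness near $\partial\Omega$ is delicate, and both features crucially rely on the hypothesis $V<0$, which makes the effective equation sub-critical at leading order and rules out the bubble-tower phenomenon that would otherwise jeopardize (iii).
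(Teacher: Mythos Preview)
Your overall architecture (Druet--Hebey--Robert iterative extraction, with Pohozaev to exclude boundary concentration, then (v) and the lower bound in (vi) as consequences of the upper bound) matches the paper. But there is a genuine gap in your termination argument, and it is precisely the point the paper is at pains to address.

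You write that the bubble extraction stops after finitely many steps because each bubble contributes a fixed amount to the uniformly bounded $L^{2N/(N-2)}$-norm. No such energy bound is assumed, and testing the equation against $u_\eps$ only tells you $\int u_\eps^{2N/(N-2)}$ is controlled by $\int |\nabla u_\eps|^2$, which you also do not know to be bounded. The paper explicitly flags this: it works \emph{without} assuming a priori finiteness of concentration, and its mechanism for obtaining finitely many bubbles is not energy quantization. Instead, the paper proves an \emph{isolation} statement (Claim~\ref{iso4}): the pairwise distances $|x_{i,\eps}-x_{j,\eps}|$ and the distances to $\partial\Omega$ are uniformly bounded below, which in a bounded domain forces $n_\eps$ bounded. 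The proof of isolation goes through Proposition~\ref{estim} and in particular Claim~\ref{estim4}: after rescaling at an intermediate scale, the limit is $|x|^{2-N}+b(x)$ with $b$ harmonic, and a Pohozaev computation using $V<0$ forces $b(0)\le 0$ and $\nabla b(0)=0$. If two concentration points clustered (or one approached $\partial\Omega$), one would build a positive harmonic singular function on $\R^N$ (or a half-space) whose regular part at the origin has the wrong sign or a nonzero gradient, yielding the contradiction. You invoke Pohozaev only to rule out boundary accumulation, but in the paper the \emph{same} Pohozaev-based sign argument is what prevents interior clustering and bubble towers, hence what gives both the finiteness and the scale comparability (iii). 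Your one-line justification of (iii) and of $x_i\ne x_j$ via ``diverging rescaled height contradicts the bubble limit'' only rules out towers at disparate scales; it does not by itself exclude several bubbles at comparable scales merging to a single limit point.

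A secondary point: your contradiction argument for the upper bound in (vi) (``rescaling at $y_\eps$ produces a new bubble'') is circular once you abandon the energy bound, since you need finiteness of bubbles to terminate. The paper obtains the sharp upper bound around each concentration point by comparison with an explicit barrier of the form $\mu_\eps^{(N-2)(1-2\nu)/2}\mathcal G^{1-\nu}+\alpha_\eps(r_\eps^{N-2}\mathcal G)^{\nu}$ (Claim~\ref{estim2}), and then globalizes via Harnack once isolation is established.
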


Up to reordering the $x_{i,\eps}$, we assume that $\mu_{1,\eps}=\max_i \mu_{i,\eps}$ and we set $\mu_\eps=\mu_{1,\eps}$.

We also define the small ball 
\[ \mathsf{b}_\ieps := B(x_\ieps, \delta_0) \]
around $x_\ieps$, with some number $\delta_0 >0$ independent of $\eps$ and chosen so small that $\delta_0 < \frac 12  \min_{i\neq j} |x_\ieps - x_\jeps|$.  

The main result of this section consists in quantitative bounds on the remainder 
\begin{equation}
\label{r definition}
 r_\ieps := u_\ieps - B_\ieps 
\end{equation}
as well as the improved remainders 
\begin{equation}
\label{q definition}
q_\ieps := r_\ieps - W_\ieps
\end{equation} 
and
\begin{equation}
\label{p definition}
p_\ieps := 
q_\ieps -  \widetilde{W}_\ieps
\end{equation} 
 on $\mathsf b_\ieps$. Here, $W_\ieps$ is the function defined in \eqref{Wieps definition}. Similarly, the term $\widetilde{W}_\ieps$ is defined to be 
 \begin{equation}
 \label{Wieps tilde definition}
 \widetilde{W}_\ieps :=  \mu_{\ieps}^3 \sum_{\alpha \in \N_0^N \, : \, |\alpha| = 3} \widetilde{W}_\alpha \left( \frac{x-x_\ieps}{\mu_\ieps} \right) \partial_{\alpha} (q_\ieps)(x_\ieps),
  \end{equation}
 for functions $\widetilde{W}_\alpha$ satisfying, for every multiindex $\alpha \in \N_0^N$ with $|\alpha| = 3$,
 \begin{equation}
\label{Wjkl equation}
-\Delta \widetilde{W}_\alpha - N(N+2) B^\frac{4}{N-2} \widetilde{W}_\alpha = f_\alpha \quad \text{ on } \R^N, \quad \widetilde{W}_\alpha(x) = \frac{1}{\alpha!} x^\alpha + o(|x|^3) \quad \text{ as } x \to 0,
\end{equation}
with
\begin{equation}
\label{fjkl equation}
f_\alpha = \begin{cases}
0 & \text{ if } x^\alpha = x_j x_k x_l \, \text{ for some } j \neq k \neq l \neq j, \\
-B x_l & \text{ if } x^\alpha = x_j^2 x_l \text{ for some } j, l. 
\end{cases}
\end{equation}
We construct the functions $W_{jkl}$ in Lemma  \ref{lemma Wjkl} below. 

Notice that the correction terms $W_\ieps$ and $\widetilde{W}_\ieps$ are chosen to ensure that 
\begin{equation}
\label{q derivative zero}
\partial_\alpha q_\ieps(x_\ieps) = 0 \quad \text{ for all } \alpha \in \N_0^N \, \text{ with } |\alpha| \leq 2
\end{equation} 
and
\begin{equation}
\label{p derivative zero}
\partial_\alpha p_\ieps(x_\ieps) = 0 \quad \text{ for all } \alpha \in \N_0^N \, \text{ with } |\alpha| \leq 3. 
\end{equation} 

 The bounds on $r_\ieps$, $q_\ieps$ and $p_\ieps$ are stated in the subsections below as Propositions \ref{proposition r eps}, \ref{proposition q} and \ref{proposition p}.

An important ingredient in the proof of Theorem \ref{theorem multibubble} will be a non-degeneracy property of the bubble $B$. Namely, consider the linearized equation
\begin{equation}
\label{lin eq}
-\Delta u = N(N+2)  B^{\frac{4}{N-2}} u \qquad \text{ on } \R^N. 
\end{equation}
Then the behavior of non-trivial solutions to \eqref{lin eq} is restricted by the following proposition \cite[Corollary A.2]{KL}. 

\begin{proposition}
\label{proposition u = 0}
Let $u$ be a solution to \eqref{lin eq} and suppose that $|u(x)| \lesssim |x|^\tau$ on $\R^{N}$ for some $\tau \in (1, \infty) \setminus \N$. Then $u \equiv 0$. 
\end{proposition}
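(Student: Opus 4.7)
My strategy is to decompose $u$ in spherical harmonics, reducing \eqref{lin eq} to a countable family of radial ODEs whose admissible solutions I classify using Frobenius analysis at $0$ and Euler-type asymptotics at infinity. The hypothesis $\tau \notin \N$ will then force each spherical mode to vanish.

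By interior elliptic regularity, $u$ is smooth on $\R^N$, so I may write $u(r\theta) = \sum_{k,m} u_{k,m}(r) Y_{k,m}(\theta)$, where $\{Y_{k,m}\}_m$ is an orthonormal basis of spherical harmonics of degree $k$. Each coefficient $u_{k,m}$ is smooth on $[0,\infty)$, inherits the pointwise bound $|u_{k,m}(r)| \lesssim r^\tau$, and satisfies
\[
-u_{k,m}'' - \frac{N-1}{r}\, u_{k,m}' + \frac{k(k+N-2)}{r^2}\, u_{k,m} = N(N+2)\, B^{4/(N-2)}\, u_{k,m}.
\]
The indicial roots at $r=0$ are $k$ and $-(k+N-2)$, and smoothness at the origin forces $u_{k,m} = c_{k,m}\, \psi_k$ for some $c_{k,m}\in\R$, where $\psi_k$ is the unique (up to scale) solution with $\psi_k(r) \sim r^k$ as $r \to 0^+$. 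Since $B^{4/(N-2)} = O(r^{-4})$ at infinity, the ODE is asymptotically an Euler equation whose solutions are $r^k$ and $r^{-(k+N-2)}$, so $\psi_k(r) = a_k r^k(1+o(1)) + b_k r^{-(k+N-2)}(1+o(1))$ as $r \to \infty$ for some constants $a_k, b_k \in \R$.

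The main input, and the delicate step, is the non-degeneracy of the Aubin--Talenti bubble: the kernel of $-\Delta - N(N+2) B^{4/(N-2)}$ in $\dot{H}^1(\R^N)$ is exactly the $(N+1)$-dimensional space spanned by the translations $\partial_i B$ (the $k=1$ modes) and the dilation $\partial_\mu|_{\mu=1}\bigl(\mu^{-(N-2)/2} B(\cdot/\mu)\bigr)$ (the $k=0$ mode). This may be cited from the classical literature or derived from the very same ODE analysis. As a consequence, for $k \in \{0,1\}$ the function $\psi_k$ decays at infinity, whereas for $k \geq 2$ no bounded-at-infinity solution exists, which forces $a_k \neq 0$ and $\psi_k(r) \sim a_k r^k$ as $r \to \infty$.

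It remains to match the growth bound $|u_{k,m}(r)| \lesssim r^\tau$ against the shape of $\psi_k$. For $k=0$: $\psi_0(0) \neq 0$, so $|c_{0,m}| \lesssim r^\tau \to 0$ as $r \to 0$ forces $c_{0,m}=0$. For $k=1$: $\psi_1(r) \sim r$ near the origin, so $|c_{1,m}|\, r \lesssim r^\tau$ with $\tau > 1$ yields $|c_{1,m}| \lesssim r^{\tau-1} \to 0$ and $c_{1,m}=0$. For $k \geq 2$, the hypothesis $\tau \notin \N$ ensures $k \neq \tau$: if $k < \tau$, the near-origin bound $|c_{k,m}|\, r^k \lesssim r^\tau$ gives $|c_{k,m}| \lesssim r^{\tau-k} \to 0$; if $k > \tau$, the at-infinity bound $|c_{k,m} a_k|\, r^k \lesssim r^\tau$, together with $a_k \neq 0$, yields $|c_{k,m}| \lesssim r^{\tau-k} \to 0$ as $r \to \infty$. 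In every case $c_{k,m} = 0$, whence $u \equiv 0$.
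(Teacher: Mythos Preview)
The paper does not supply its own proof of this proposition; it simply cites \cite[Corollary 2.4]{Koenig2022}. Your argument is therefore not competing with anything in the paper itself, and it is essentially correct: the spherical-harmonic decomposition, the Frobenius analysis at the origin, the Euler-type asymptotics at infinity, and the appeal to the non-degeneracy of the Aubin--Talenti bubble to exclude decaying solutions in modes $k\geq 2$ form the standard route to this kind of statement, and the case distinction $k<\tau$ versus $k>\tau$ (using $\tau\notin\N$) is handled cleanly.

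Two remarks. First, the sentence ``$\psi_k(r) = a_k r^k(1+o(1)) + b_k r^{-(k+N-2)}(1+o(1))$'' should be read as the existence of a fundamental system at infinity with these leading behaviours, after which $\psi_k$ is a linear combination; as written it could be misread as an additive asymptotic expansion of a single function. Since the indicial roots at infinity differ by the integer $2k+N-2$, the recessive solution may carry a logarithmic factor, but this does not affect your argument because you only use the leading growth $r^k$ of the dominant solution when $a_k\neq 0$. Second, you observe that $\psi_0$ and $\psi_1$ decay at infinity but in fact never use this; the near-origin bound with $\tau>1$ already kills $k=0,1$. So the non-degeneracy input is needed only for $k\geq 2$, exactly as you apply it. These are points of exposition rather than gaps.
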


Before we go on, let us note a simple a priori estimate which will simplify the following estimates on $r_\ieps$ and $q_\ieps$. 

\begin{lemma}
\label{lemma eps lesssim mu}
Suppose that $V < 0$. If $N \geq 5$, then $\eps \lesssim \mu_\eps^{N-4}$. If $N = 4$, then $\eps \lesssim \frac{1}{ \ln (\mu_\eps^{-1})}$. 
\end{lemma}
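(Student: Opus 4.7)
The approach is a localised Pohozaev identity on the fixed small ball $\mathsf{b}_\ieps = B(x_\ieps, \delta_0)$ around a single concentration point. Multiplying the equation by the Pohozaev multiplier $(x-x_\ieps) \cdot \nabla u_\eps + \tfrac{N-2}{2} u_\eps$ and integrating over $\mathsf{b}_\ieps$, the critical-exponent cancellations in the bulk make both the $\int |\nabla u_\eps|^2$ and $\int u_\eps^{2N/(N-2)}$ contributions vanish, leaving an identity of the form
\[
-\tfrac{\eps}{2} \int_{\mathsf{b}_\ieps} \bigl(2V(x) + (x-x_\ieps) \cdot \nabla V(x)\bigr) u_\eps^2 \, dx \;=\; \mathcal{B}_\ieps,
\]
where $\mathcal{B}_\ieps$ is a sum of surface integrals of $u_\eps \partial_\nu u_\eps$, $|\nabla u_\eps|^2$, $u_\eps^{2N/(N-2)}$ and $V u_\eps^2$ over $\partial \mathsf{b}_\ieps$.

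For the right-hand side, by Proposition \ref{proposition preliminaries multi}(v) we have $|u_\eps| + |\nabla u_\eps| = O(\mu_\eps^{(N-2)/2})$ uniformly on $\partial \mathsf{b}_\ieps$ (which sits at positive distance from every $x_j$), so every integrand in $\mathcal{B}_\ieps$ is $O(\mu_\eps^{N-2})$ and hence $|\mathcal{B}_\ieps| \lesssim \mu_\eps^{N-2}$. For the left-hand side, shrinking $\delta_0$ once and for all (using $V \in C^1(\overline{\Omega})$ and $V < 0$ on $\overline{\Omega}$) ensures that
\[
-\bigl(2V(x) + (x-x_\ieps)\cdot \nabla V(x)\bigr) \;\geq\; c_0 > 0 \qquad \text{on } \mathsf{b}_\ieps,
\]
uniformly in $i$ and $\eps$. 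Combined with the lower bound $u_\eps \geq C^{-1} B_\ieps$ from Proposition \ref{proposition preliminaries multi}(vi), the elementary estimate $\int_{\mathsf{b}_\ieps} B_\ieps^2 \gtrsim \mu_\ieps^{-N+4}$ when $N \geq 5$ (respectively $\gtrsim \ln \mu_\ieps^{-1}$ when $N=4$), and $\mu_\ieps \sim \mu_\eps$ from Proposition \ref{proposition preliminaries multi}(iii), the LHS is bounded below by a positive multiple of $\eps \mu_\eps^{-N+4}$ (respectively $\eps \ln \mu_\eps^{-1}$).

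Comparing the two bounds yields $\eps \mu_\eps^{-N+4} \lesssim \mu_\eps^{N-2}$ for $N \geq 5$, hence $\eps \lesssim \mu_\eps^{2N-6} \leq \mu_\eps^{N-4}$ (since $2N-6 \geq N-4$ for $N \geq 2$ and $\mu_\eps \to 0$); and $\eps \ln \mu_\eps^{-1} \lesssim \mu_\eps^2$ for $N = 4$, which in particular gives $\eps \lesssim 1/\ln \mu_\eps^{-1}$. The only delicate point is the sign control on the LHS, which is why localisation around a single $x_\ieps$ (rather than a global Pohozaev identity on $\Omega$) is essential: in the global setting, the coefficient $2V + (x-y_0)\cdot \nabla V$ need not have a definite sign for any choice of $y_0$ when $|\nabla V|$ is comparable to $|V|$ on the scale of $\mathrm{diam}(\Omega)$.
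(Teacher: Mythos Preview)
Your approach is exactly the paper's: a local Pohozaev identity on $\mathsf{b}_\ieps$, sign control on the bulk term using $V<0$, and the bound $|\mathcal B_\ieps|\lesssim \mu_\eps^{N-2}$ on the boundary terms from Proposition~\ref{proposition preliminaries multi}(v). The structure and logic are correct.

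There is, however, a computational slip in your lower bound for the bulk term. You claim $\int_{\mathsf{b}_\ieps} B_\ieps^2 \gtrsim \mu_\ieps^{-N+4}$ for $N\ge 5$ (respectively $\gtrsim \ln \mu_\ieps^{-1}$ for $N=4$), but this is off by a factor of $\mu_\ieps^{N-2}$: since $B_\ieps(x)=\mu_\ieps^{(N-2)/2}(\mu_\ieps^2+|x-x_\ieps|^2)^{-(N-2)/2}$, the change of variables $x=x_\ieps+\mu_\ieps z$ gives
\[
\int_{\mathsf{b}_\ieps} B_\ieps^2\,dx \;=\; \mu_\ieps^{2}\int_{B(0,\delta_0\mu_\ieps^{-1})} B(z)^2\,dz \;\sim\;
\begin{cases}
\mu_\ieps^{2} & (N\ge 5),\\[2pt]
\mu_\ieps^{2}\ln(\mu_\ieps^{-1}) & (N=4).
\end{cases}
\]
In particular your stated lower bound is \emph{false} for $N\ge 5$ (the left side tends to $0$ while $\mu_\ieps^{-N+4}\to\infty$). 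With the correct estimate the comparison reads $\eps\,\mu_\eps^{2}\lesssim \mu_\eps^{N-2}$, i.e.\ $\eps\lesssim \mu_\eps^{N-4}$ directly, and for $N=4$ one gets $\eps\,\mu_\eps^{2}\ln(\mu_\eps^{-1})\lesssim \mu_\eps^{2}$, i.e.\ $\eps\lesssim 1/\ln(\mu_\eps^{-1})$. No detour through $\mu_\eps^{2N-6}$ is needed (or available). After this correction your proof coincides with the paper's.
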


\begin{proof}
By Pohozaev's identity (see Appendix \ref{Pohozaevidentities}), we have, for any $i$,  
\begin{align*}
& \qquad -2 \eps \int_{{\mathsf{b}_\ieps}} V u_\eps^2 - \eps \int_{\mathsf{b}_\ieps} u_\eps^2 \nabla V(x)\cdot(x-x_\ieps) \diff x \\
&=  2 \int_{\partial {\mathsf{b}_\ieps} } \left(\delta_0 \left(\partial_\nu u_\eps\right)^2 - \delta_0 \left( |\nabla u_\eps|^2 + \frac{2 u_\eps^{p+1}}{p+1} - \eps V u_\eps^2 \right) + (N-2) u_\eps \partial_\nu u_\epsilon  \right) . 
\end{align*} 
Since $V < 0$, by using Proposition \ref{proposition preliminaries multi}.\eqref{item u to bubble},  the left side is proportional to $\eps \mu_\eps^2$ if $N \geq 5$ and to $\eps \mu_\eps^2 \ln (\mu_\eps^{-1})$ if $N = 4$. 

On the other hand, by Proposition \ref{proposition preliminaries multi}.\eqref{item tilde G} the modulus of the right side is bounded by a constant times $\mu_\eps^{N-2}$. This concludes the proof. 
\end{proof}

    \subsection{The bound on $r_\ieps$}

\begin{proposition}
\label{proposition r eps}
Let $i=1,...,n$ and let $r_\ieps$ be defined by \eqref{r definition}. As $\eps \to 0$, for every $\theta\in (0,1) \cup (1,2)$ and, 
\[ |r_\ieps(x)| \lesssim (\eps \mu_\eps^{-\frac{N}{2} + 3 - \theta} + \mu_\eps^\frac{N-2}{2})  |x-x_\ieps|^{\theta} \qquad \text{ on } \mathsf{b_\ieps}. \]
Moreover, for $\theta = 0$, we have 
\[ r_\ieps(x) \lesssim
\begin{cases}
\eps \mu_\eps^{-\frac{N}{2} +3} + \mu_\eps^\frac{N-2}{2} & \text{ if } N \geq 5, \\
\eps \mu_\eps \ln (\mu_\eps^{-1}) + \mu_\eps  & \text{ if } N = 4. 
\end{cases}
\]
\end{proposition}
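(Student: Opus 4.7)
The plan is to argue by contradiction via a blow-up / rescaling scheme in the spirit of \cite{KL}, using the non-degeneracy Proposition \ref{proposition u = 0} as the key obstruction. Subtracting $-\Delta B_\ieps = N(N-2) B_\ieps^{(N+2)/(N-2)}$ from \eqref{brezis peletier additive} and Taylor-expanding the nonlinearity, the remainder $r_\ieps$ satisfies on $\mathsf{b}_\ieps$
\begin{equation*}
-\Delta r_\ieps - N(N+2) B_\ieps^{\frac{4}{N-2}} r_\ieps = -\eps V u_\ieps + \mathcal N_\ieps(r_\ieps),
\end{equation*}
where $\mathcal N_\ieps(r_\ieps)$ collects the superlinear-in-$r_\ieps$ terms. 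The two pieces of the asserted bound correspond to the two natural sources driving $r_\ieps$: the interior source $\eps V u_\ieps$, of magnitude $\eps \mu_\ieps^{-(N-2)/2}$ near $x_\ieps$ and decaying like $\eps B_\ieps$ outward, is responsible for the $\eps\mu_\eps^{-N/2+3-\theta}|x-x_\ieps|^\theta$ contribution, while the Green-function-like profile of $u_\ieps$ on $\partial \mathsf{b}_\ieps$ from Proposition \ref{proposition preliminaries multi}.\eqref{item tilde G} enters through the boundary and produces $\mu_\eps^{(N-2)/2}|x-x_\ieps|^\theta$.

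I would first handle the borderline $\theta = 0$ estimate, since it supplies the $L^\infty$ a priori control used thereafter. A Green's-function representation of $r_\ieps$ on $\mathsf{b}_\ieps$ reduces the task to estimating $\int G(x,y)[\eps V u_\ieps + \mathcal N_\ieps(r_\ieps)](y)\diff y$ plus a boundary contribution of order $\mu_\eps^{(N-2)/2}$. Using $u_\ieps \lesssim B_\ieps$, a direct scaling calculation shows $\eps\int G(x,y) B_\ieps(y)\diff y$ is of order $\eps \mu_\eps^{-N/2+3}$ for $N \geq 5$ and $\eps \mu_\eps \ln(\mu_\eps^{-1})$ for $N = 4$, yielding exactly the stated bound including the logarithmic enhancement in dimension four; the nonlinear contribution is reabsorbed by a standard bootstrap.

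For $\theta \in (0,1) \cup (1,2)$, suppose for contradiction that
\begin{equation*}
M_\eps := \sup_{x \in \mathsf{b}_\ieps} \frac{|r_\ieps(x)|}{T_\eps\, |x-x_\ieps|^\theta} \longrightarrow \infty, \qquad T_\eps := \eps \mu_\eps^{-N/2+3-\theta} + \mu_\eps^{(N-2)/2},
\end{equation*}
and let $y_\eps$ nearly realize the supremum, setting $d_\eps := |y_\eps - x_\ieps|$. The natural rescaling $\tilde r_\eps(z) := r_\ieps(x_\ieps + d_\eps z)/(M_\eps T_\eps d_\eps^\theta)$ satisfies $|\tilde r_\eps(z)| \leq |z|^\theta$ and $|\tilde r_\eps| = 1$ at a point of unit norm. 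By the precise choice of $T_\eps$, both the rescaled source and, using the $\theta = 0$ bound, the rescaled nonlinearity are of size $O(M_\eps^{-1}) \to 0$. Passing $s_\eps := d_\eps/\mu_\ieps \to s_\infty \in [0,\infty]$ along a subsequence, elliptic regularity gives local uniform convergence to a solution $\tilde r$ of the linearized bubble equation on $\R^N$ (when $s_\infty < \infty$) or of $-\Delta \tilde r = 0$ on $\R^N \setminus \{0\}$ (when $s_\infty = \infty$), with polynomial growth $|\tilde r(z)| \lesssim |z|^\theta$. Since $\theta \notin \N$, and using that $r_\ieps(x_\ieps) = 0$ and $\nabla r_\ieps(x_\ieps) = 0$ by Proposition \ref{proposition preliminaries multi}.(ii), Proposition \ref{proposition u = 0} combined with Liouville-type reasoning forces $\tilde r \equiv 0$, contradicting $|\tilde r_\eps| = 1$ at a point of norm one.

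The main obstacle I anticipate is the limiting case $s_\infty = \infty$, in which the linearized-bubble term disappears and one must exclude harmonic modes of order $< 2$ on $\R^N \setminus \{0\}$; the vanishing of $r_\ieps$ and its gradient at $x_\ieps$ together with the non-integer nature of $\theta$ is precisely what achieves this exclusion, and indeed the restriction $\theta \in (0,1) \cup (1,2)$ in the statement is imposed for exactly this reason.
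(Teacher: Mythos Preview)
Your blow-up/contradiction scheme and the appeal to Proposition~\ref{proposition u = 0} are exactly the paper's strategy, and the case analysis in $s_\infty = d_\eps/\mu_\ieps$ matches the paper's Cases~1--3. The one genuine gap is the logical order. You propose to establish the $\theta=0$ bound \emph{first}, via Green's representation with the nonlinear part ``reabsorbed by a standard bootstrap'', and then use it to control $\mathcal N_\ieps(r_\ieps)$ in the $\theta>0$ argument. But the Green's representation of $r_\ieps$ produces the term
\[
\int_{\mathsf b_\ieps} G(x,y)\,B_\ieps^{\frac{4}{N-2}}(y)\,|r_\ieps(y)|\,\diff y,
\]
and the operator $r\mapsto \int G\,B_\ieps^{4/(N-2)} r$ has $L^\infty\to L^\infty$ norm of order $1$ (compute it at $x=x_\ieps$: after the change of variables $y=x_\ieps+\mu_\ieps z$ the kernel becomes $\int_{\R^N} |z|^{-(N-2)}(1+|z|^2)^{-2}\,\diff z$, which is a fixed positive constant for $N\ge 5$). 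So there is no smallness to absorb, and the bootstrap does not close without additional input.

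The paper avoids this circularity by reversing the order. For $\theta\in(0,1)\cup(1,2)$ it writes the equation for the rescaled $\bar r_\ieps$ as
\[
-\Delta \bar r_\ieps - d_\ieps^{2}\,Q(\bar u_\ieps,\bar B_\ieps)\,\bar r_\ieps \;=\; -\eps d_\ieps^{2}\,\bar V_\ieps\,\frac{\bar u_\ieps}{r_\ieps(z_\ieps)},
\qquad Q(u,v):=N(N-2)\,\frac{u^{\frac{N+2}{N-2}}-v^{\frac{N+2}{N-2}}}{u-v},
\]
so that no separate nonlinear remainder appears at all. Since $u_\eps\lesssim B_\ieps$ from Proposition~\ref{proposition preliminaries multi}, one has $|Q(\bar u_\ieps,\bar B_\ieps)|\lesssim \bar B_\ieps^{4/(N-2)}$, and the potential $d_\ieps^{2}\bar B_\ieps^{4/(N-2)}$ either vanishes (Cases~1 and~2a) or converges to $N(N+2)B_{0,\beta_{i,0}}^{4/(N-2)}$ (Case~3), using only the crude bound $u_\eps\lesssim B_\ieps$. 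This yields the $\theta>0$ estimate with no a~priori $L^\infty$ control on $r_\ieps$ required. Only afterwards is $\theta=0$ obtained, by reinjecting the already-proven $\theta>0$ bound into $\int G\,B_\ieps^{4/(N-2)}|r_\ieps|$ and using Hardy--Littlewood rearrangement; this is the step where the logarithmic correction in $N=4$ naturally emerges. If you reorganize your argument in this order (or, equivalently, replace the splitting $N(N+2)B_\ieps^{4/(N-2)}r_\ieps + \mathcal N_\ieps$ by the secant potential $Q(u_\eps,B_\ieps)\,r_\ieps$ in your $\theta>0$ step), the proof goes through.
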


\begin{proof}
We first assume that $\theta \in (0,1) \cup (1,2)$. The case $\theta = 0$ will be treated below be a separate argument. 

Recall $r_\ieps= u_\eps - B_\ieps$. We denote  
\begin{equation}
\label{R eps definition}
R_\ieps(x):= \frac{r_\ieps(x)}{|x-x_\ieps|^{\theta}}.
\end{equation}
Fix some $z_\ieps \in \mathsf{b}_\ieps$ such that
\begin{equation}
\label{R eps (z eps)}
R_\ieps (z_\ieps) \geq \frac 12 \|R_\ieps \|_{L^\infty(\mathsf{b}_\ieps)}. 
\end{equation}
Moreover, we denote $d_\ieps:= |x_\ieps - z_\ieps|$. Let us define the rescaled and normalized version
\begin{equation}
\label{bar r eps definition}
\bar{r}_\ieps(x):= \frac{r_\ieps(x_\ieps + d_\ieps x)}{r_\ieps(z_\ieps)}, \qquad x \in B(0, d_\ieps^{-1} \delta_0). 
\end{equation}
By the choice of $B_\ieps$, and observing \eqref{R eps (z eps)}, we have 
\begin{equation}
\label{r lesssim x vartheta}
\bar{r}_\ieps(0)= \nabla \bar{r}_\ieps(0) = 0, \qquad  \bar{r}_\ieps(x) \lesssim |x|^{\theta}, \qquad x \in B(0, d_\ieps^{-1} \delta_0),
\end{equation}
in particular $\bar{r}_\eps$ is uniformly bounded on compacts of $\R^N\setminus\{0\}$. 

On $B(0, d_\ieps^{-1} \delta_0)$, we have
\begin{equation}
\label{bar r eps equation}
-\Delta \bar{r}_\ieps - \bar{r}_\ieps d_\ieps^2 Q(\bar{u}_\ieps, \bar{B}_\ieps)  = - \eps d_\ieps^2 \bar{V}_\ieps \frac{\bar{u}_\ieps}{r_\ieps(z_\ieps)} , 
\end{equation}
where $Q(u,v):= N(N-2) \frac{u^\frac{N+2}{N-2}-v^\frac{N+2}{N-2}}{u-v}$. Moreover we wrote $\bar{u}_\ieps(x) :=  u_\eps(x_\ieps + d_\ieps x)$ and likewise $\bar{a}_\ieps (x) :=  a_\eps(x_\ieps + d_\ieps x)$ and $\bar{B}_\ieps(x) :=  B_\ieps(x_\ieps + d_\ieps x) = \mu_\ieps^{-\frac{N-2}{2}} B (\mu_\ieps^{-1} d_\ieps x)$. 

We treat three cases separately, depending on the ratio between $\mu_\eps$ and $d_\ieps$. It will be useful to observe the bounds 
\begin{equation}
\label{B bar bounds}
\bar{B}_\ieps(x) = \left( \frac{\mu_\ieps}{\mu_\ieps^2 + d_\ieps^2|x|^2} \right)^\frac{N-2}{2} \lesssim 
\begin{cases}
\mu_\ieps^{-\frac{N-2}{2}}, &\text{ if } \mu_\ieps \gtrsim d_\ieps, \\
\mu_\ieps^\frac{N-2}{2} d_\ieps^{-N+2} , &\text{ if } \mu_\ieps \lesssim  d_\ieps, 
\end{cases}
\end{equation}
uniformly for $x$ in compacts of $\R^N \setminus \{0\}$. 

\textit{Case 1. $\mu_\eps >> d_\ieps$ as $\eps \to 0$. } Since $\bar{u}_\ieps \lesssim \bar{B}_\ieps$ on $\mathsf{b}_\ieps$ and $|Q(u,v)| \lesssim |u|^{\frac{4}{N-2}}+|v|^{\frac{4}{N-2}}$, the  second summand on the left side of \eqref{bar r eps equation}  tends to zero uniformly on compacts by \eqref{B bar bounds}, because $d_\ieps^2 \mu_\eps^{-2} \to 0$. 

Using $\bar{u}_\ieps \lesssim \bar{B}_\ieps \lesssim \mu_\ieps^{-\frac{N-2}{2}}$ and $\frac{1}{r_\ieps(z_\ieps)} \lesssim d_\ieps^{-\theta} \frac{1}{\|R_\ieps\|_\infty}$ by \eqref{R eps (z eps)}, the right side of \eqref{bar r eps equation} is bounded by 
\begin{align*}
\left| \eps d_\ieps^2 \bar{V}_\ieps \frac{\bar{u}_\ieps}{r_\ieps(z_\ieps)} \right| \lesssim \frac{\eps d_\ieps^{2-\theta} \mu_\ieps^{-\frac{N-2}{2}}}{\|R_\ieps\|_{L^\infty(\mathsf{b}_\ieps)}}  \lesssim \frac{\eps \mu_\eps^{-\frac{N}{2} + 3 - \theta}}{\|R_\ieps\|_{L^\infty(\mathsf{b}_\ieps)}},
\end{align*}

Now suppose for contradiction that $\|R_\ieps\|_{L^\infty(\mathsf{b}_\ieps)} >> \eps \mu_\eps^{-\frac{N}{2} + 3 - \theta}$ as $\eps \to 0$. Then this term goes to zero uniformly. Thus, by elliptic estimates, we have convergence on any compact of $\R^N\setminus \{0\}$, and the limit $\displaystyle \bar{r}_{i,0} :=  \lim_{\eps \to 0} \bar{r}_\ieps$ satisfies 
\[ -\Delta \bar{r}_{i,0} = 0 \qquad \text{ on } \R^N\setminus\{0\}. \]
By Bôcher's and Liouville's theorems, the growth bound \eqref{r lesssim x vartheta} implies that $\bar{r}_{i,0} \equiv 0$. But by the choice of $d_\ieps$, there is $\xi_\ieps := \frac{z_\ieps - x_\ieps}{d_\ieps} \in \mathbb S^{N-1}$ such that $\bar{r}_\ieps(\xi_\ieps) = 1$. Up to a subsequence, $\ds \xi_{i,0} :=  \lim_{\eps \to 0} \xi_\ieps \in \mathbb S^{N-1}$ exists and satisfies $\bar{r}_{i,0}(\xi_{i,0}) = 1$. This contradicts $\bar{r}_{i,0} \equiv 0$. 

Thus we must have $\|R_\ieps\|_{L^\infty(\mathsf{b}_\ieps)} \lesssim \eps \mu_\eps^{-\frac{N}{2} + 3 - \theta}$, i.e. $r_\ieps(x) \lesssim \eps \mu_\eps^{-\frac{N}{2} + 3 - \theta} |x-x_\ieps|^{\theta}$.

\textit{Case 2.a) $\mu_\eps << d_\ieps <<1$ as $\eps \to 0$. } In this case, we have 
\[ \bar{r}_\ieps d_\ieps^2 F(\bar{u}_\ieps, \bar{B}_\ieps) \lesssim d_\ieps^2 \bar{B}_\ieps^{\frac{4}{N-2}} \lesssim \mu_\eps^2 d_\ieps^{-2} \to 0 \]
and 
\[ \left| \eps d_\ieps^2 \bar{V}_\ieps \frac{\bar{u}_\ieps}{r_\ieps(z_\ieps)} \right| \lesssim \frac{\eps d_\ieps^{-N+4-\theta} \mu_\eps^{\frac{N-2}{2}}}{\|R_\ieps\|_{L^\infty(\mathsf{b}_\ieps)}}  \lesssim \frac{ \eps \mu_\eps^{-\frac{N}{2} +3 - \theta}}{\|R_\ieps\|_{L^\infty(\mathsf{b}_\ieps)}} \]
uniformly on compacts of $\R^N \setminus \{0\}$. 
If $\|R_\ieps\|_\infty >> \eps \mu_\eps^{-\frac{N}{2} + 3 - \theta}$, then, using that still $\ds d_\ieps \to 0$, $\bar{r}_{i,0} :=  \lim_{\eps \to 0} \bar{r}_\ieps$ satisfies 
\[ -\Delta \bar{r}_{i,0} = 0 \qquad \text{ on } \R^N \setminus \{0\}. \]
Using again the Bôcher and Liouville theorems, $\bar{r}_{i,0}\equiv 0$. As in Case 1, we can now derive a contradiction.  

Thus we must have $\|R_\ieps\|_{L^\infty(\mathsf{b}_\ieps)} \lesssim \eps \mu_\eps^{-\frac{N}{2} + 3 - \theta}$, i.e. $r_\ieps(x) \lesssim \eps \mu_\eps^{-\frac{N}{2} + 3 - \theta}|x-x_\ieps|^{\theta}$, also in this case. 

\textit{Case 2.b) $d_\ieps \sim 1$ as $\eps \to 0$. } In this case there is no need for a blow-up argument. Instead, we can simply bound, by the definition of $z_\ieps$, 
\[ \frac{|r_\ieps(x)|}{|x-x_\ieps|^{\theta}} \lesssim \frac{|r_\ieps(z_\ieps)|}{d_\ieps^{\theta}} \lesssim |r_\ieps(z_\ieps)| \lesssim \mu_\ieps^{\frac{N-2}{2}}, \]
where the last inequality simply comes from the bound $|u_\eps| \lesssim B_\ieps$ on $\mathsf{b}_\ieps$ and the observation that $d_\ieps \sim 1$ implies $B_\ieps(z_\ieps) \lesssim \mu_\eps^{\frac{N-2}{2}}$. Thus 
\[ |r_\ieps(x)| \lesssim \mu_\eps^{\frac{N-2}{2}} |x-x_\ieps|^{\theta}, \]
which completes the discussion of this case.

\textit{Case 3. $\mu_\eps \sim d_\ieps$ as $\eps \to 0$. } This is the most delicate case because the second summand on the left side of \eqref{bar r eps equation} now tends to a non-trivial limit. Indeed, $\displaystyle \beta_{i,0} :=  \lim_{\eps \to 0} \beta_\ieps := \lim_{\eps \to 0} \frac{\mu_\ieps}{d_\ieps}$ exists and $\beta_{i,0}  \in (0,\infty)$. Then 
\[ d_\ieps^\frac{N-2}{2} \bar{B}_\ieps = \frac{´\beta_\ieps^{\frac{N-2}{2}} }{(\beta_\ieps^2 + |x|^2)^{\frac{N-2}{2}}} \to  \frac{´\beta_{i,0}^{\frac{N-2}{2}} }{(\beta_{i,0}^2 + |x|^2)^{\frac{N-2}{2}}} = B_{0,\beta_{i,0}}. \]
By the convergence of $u_\eps$ from Proposition \ref{proposition preliminaries multi}, we also have $d_\ieps^{\frac{N-2}{2}} \bar{u}_\ieps \to B_{0,\beta_{i,0}}$ uniformly on compacts of $\R^N$. Thus $d_\ieps^2 Q(\bar{u}_\ieps, \bar{B}_\ieps) \to N(N+2) B_{0, \beta_{i_0}}$ uniformly on compacts of $\R^N$.

On the other hand, 
\[ \left|\eps d_\ieps^2 \bar{V}_\ieps \frac{\bar{u}_\ieps}{r_\ieps(z_\ieps)} \right| \lesssim \frac{\eps \mu_\eps^{-\frac{N}{2} + 3 - \theta}}{\|R_\ieps\|_{L^\infty(\mathsf{b}_\ieps)}} . \]
If $\|R_\ieps\|_{L^\infty(\mathsf{b}_\ieps)} >> \eps \mu_\eps^{-\frac{N}{2} + 3 - \theta}$, we therefore recover the limit equation
\[  -\Delta \bar{r}_{i,0} = N(N+2) \bar{r}_{i,0} B_{0, \beta_{i,0}}^\frac{4}{N-2}  \qquad \text{ on } \R^N, \]
which is precisely the linearized equation \eqref{lin eq}. By \eqref{r lesssim x vartheta}, we have $|r_{i,0}(x)| \lesssim |x|^{\theta}$ for all $x \in \R^N$. Thus by the classification, see \cite[Proposition A.1]{KL}, and the fact that $\bar{r}_{i,0}(0)=\nabla \bar{r}_{i,0}(0) = 0$,  we conclude $\bar{r}_{i,0} \equiv 0$. This contradicts $\bar{r}_{i,0}(\xi_{i,0}) = 1$, as desired. 

Thus we  have shown $\|R_\ieps\|_{L^\infty(\mathsf{b}_\ieps)} \lesssim \eps \mu_\eps^{-\frac{N}{2} + 3 - \theta}$, i.e. $r_\ieps(x) \lesssim \eps \mu_\eps^{-\frac{N}{2} + 3 - \theta} |x-x_\ieps|^{\theta}$, also in the third and final case. This finishes the proof for $\theta \in (0,1) \cup (1,2)$. 

Let us finally prove the assertion in case $\theta = 0$, i.e. 
\begin{equation}
\label{theta =0 bound}
 r_\ieps(x) \lesssim
\begin{cases}
\eps \mu_\eps^{-\frac{N}{2} +3} + \mu_\eps^\frac{N-2}{2} & \text{ if } N \geq 5, \\
\eps \mu_\eps \ln (\mu_\eps^{-1}) + \mu_\eps  & \text{ if } N = 4. 
\end{cases}
 \text{ for } x \in \mathsf b_\ieps. 
\end{equation}

To prove \eqref{theta =0 bound}, we consider the Green's formula 
\begin{align*}
r_\ieps(x) &= \int_\Omega (-\Delta r_\ieps)(y) G(x,y) \diff y - \int_{\partial \Omega} r_\ieps(y) \frac{\partial G(x,y)}{\partial \nu} \diff \sigma(y) \\
&= \int_\Omega (u_\eps^\frac{N+2}{N-2}(y) - B_\ieps^\frac{N+2}{N-2}(y) - \eps V(y) u_\eps(y))  G(x,y) \diff y - \int_{\partial \Omega} r_\ieps(y) \frac{\partial G(x,y)}{\partial \nu} \diff \sigma(y).
\end{align*} 
Since $r_\ieps \lesssim \sum_j B_\jeps \lesssim \mu_\eps^\frac{N-2}{2}$ on $\partial \Omega$, the second term is bounded by 
\[ \left| \int_{\partial \Omega} r_\ieps(y) \frac{\partial G(x,y)}{\partial \nu} \diff \sigma(y) \right| \lesssim \mu_\eps^\frac{N-2}{2}. \]
A similar bound, which we do not detail, gives 
\[ \left| \int_{\Omega \setminus \bigcup_j \mathsf b_\jeps} (-\Delta r_\ieps)(y) G(x,y) \diff y \right| \lesssim \eps \mu_\eps^\frac{N-2}{2} + \mu_\eps^\frac{N+2}{2} \lesssim 
\begin{cases}
\eps \mu_\eps^{-\frac{N}{2} +3} + \mu_\eps^\frac{N-2}{2} & \text{ if } N \geq 5, \\
\eps \mu_\eps \ln (\mu_\eps^{-1}) + \mu_\eps  & \text{ if } N = 4. 
\end{cases}
\]
To evaluate the remaining integral over $\mathsf b_\ieps$, we use 
\[ |-\Delta r_\ieps| = |u_\eps^\frac{N+2}{N-2} - B_\ieps^\frac{N+2}{N-2} - \eps V u_\ieps| \lesssim B_\ieps^\frac{4}{N-2} r_\ieps + \eps B_\ieps \qquad \text{ on } \mathsf b_\ieps.  \]
The term containing $\eps$ is bounded by 
\[ \eps \int_{\mathsf b_\ieps} B_\ieps \frac{1}{|x-y|^{N-2}} \diff y \leq \eps \int_{\mathsf b_\ieps} B_\ieps \frac{1}{|x_\ieps-y|^{N-2}} \diff y \lesssim 
\begin{cases}
\eps \mu_\eps^{-\frac{N}{2} + 3} &\text{ if } N \geq 5, \\
\eps \mu_\eps \ln (\mu_\eps^{-1}) &\text{ if } N = 4.
\end{cases}
\]
Here, the first inequality follows by the Hardy--Littlewood rearrangement inequality (see e.g. \cite[Theorem 3.4]{Lieb2001}),  because both $B$ and $z \mapsto |z|^{-N+2}$ are symmetric-decreasing functions. 
 
To control the last remaining term, we choose some $\theta \in (0,1) \cup (1,2)$ and reinsert the bound already proved for this $\theta$. This yields 
\begin{align*}
&\qquad \int_{\mathsf b_\ieps} B_\ieps^\frac{4}{N-2}( y) |r_\ieps(y)| \frac{1}{|x-y|^{N-2}} \diff y  \\
& \lesssim (\eps \mu_\eps^{-\frac{N}{2} + 3 - \theta} + \mu_\eps^{\frac{N-2}{2}}) \int_{\mathsf b_\ieps} B_\ieps^\frac{4}{N-2}(y) |x_\ieps - y|^\theta \frac{1}{|x-y|^{N-2}} \diff y \\
&= (\eps \mu_\eps^{-\frac{N}{2} + 3} + \mu_\eps^{\frac{N-2}{2} + \theta})  \int_{B(0, \delta_0 \mu_\ieps^{-1})} B^\frac{4}{N-2}(z) |z|^\theta \frac{1}{|z - \frac{x - x_\ieps}{\mu_\ieps}|^{N-2}} \diff z \\
& \leq (\eps \mu_\eps^{-\frac{N}{2} + 3} + \mu_\eps^{\frac{N-2}{2} + \theta}) \int_{\R^N} (1 + |z|^2)^{-2 + \frac{\theta}{2}} \frac{1}{|z - \frac{x - x_\ieps}{\mu_\ieps}|^{N-2}} \diff z \\
& \leq (\eps \mu_\eps^{-\frac{N}{2} + 3} + \mu_\eps^{\frac{N-2}{2} + \theta}) \int_{\R^N} (1 + |z|^2)^{-2 + \frac{\theta}{2}} \frac{1}{|z|^{N-2}} \diff z \\
& \lesssim \eps \mu_\eps^{-\frac{N}{2} + 3} + \mu_\eps^{\frac{N-2}{2} + \theta}.
\end{align*}
The second to last inequality follows again from the Hardy-Littlewood rearrangement inequality, because $z \mapsto (1 + |z|^2)^{-2 + \frac{\theta}{2}}$ and $z \mapsto |z|^{-N+2}$ are symmetric-decreasing functions. Combining all the above estimates,  the proof in case $\theta = 0$ is complete.  
\end{proof}

  \subsection{The bound on $q_\ieps$}

\begin{proposition}
\label{proposition q}
Let $i= 1,...,n$ and let $q_\ieps$ be defined by \eqref{q definition}. As $\eps \to 0$, for all $ \nu  \in  (2,3)$,
\[ |q_\ieps(x)| \lesssim \left( \eps \mu_\eps^{-\frac{N}{2} + 4 - \nu} + \mu_\eps^\frac{N-2}{2}  \right) |x-x_\ieps|^\nu  \qquad \text{ for all } x \in \mathsf{b}_\ieps. \]
\end{proposition}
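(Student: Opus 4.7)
The strategy mirrors the proof of Proposition~\ref{proposition r eps}, with $r_\ieps$ replaced by $q_\ieps$ and the exponent $\theta \in (0,1)\cup(1,2)$ replaced by $\nu \in (2,3)$. The point of introducing the correction $\tilde{W}_\ieps(x) := \eps \mu_\ieps^{-N/2+3} V(x_\ieps) W((x-x_\ieps)/\mu_\ieps)$ is that the radial ODE defining $W$ gives
\[
-\Delta \tilde{W}_\ieps - N(N+2) B_\ieps^{4/(N-2)} \tilde{W}_\ieps = -\eps V(x_\ieps) B_\ieps.
\]
Subtracting this from the linearization of the PDE for $r_\ieps$ around $B_\ieps$ yields
\[
-\Delta q_\ieps - N(N+2) B_\ieps^{4/(N-2)} q_\ieps = \eps \bigl(V(x_\ieps) - V(x)\bigr) B_\ieps + \text{(l.o.t.)},
\]
where the lower-order contributions $-\eps V r_\ieps$, the quadratic nonlinear error from linearizing $u_\eps^{(N+2)/(N-2)}$, and $\bigl(Q(u_\eps,B_\ieps) - N(N+2) B_\ieps^{4/(N-2)}\bigr)\tilde{W}_\ieps$ are all controlled via Proposition~\ref{proposition r eps}. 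The Lipschitz gain $|V(x_\ieps)-V(x)| \lesssim |x-x_\ieps|$ reduces the leading inhomogeneity to $O(\eps |x-x_\ieps| B_\ieps)$, one power of $|x-x_\ieps|$ better than in the equation for $r_\ieps$; this is precisely what drives the improvement from $\theta < 2$ to $\nu < 3$.

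I would then set $Q_\ieps(x) := q_\ieps(x)/|x-x_\ieps|^\nu$, choose $z_\ieps \in \mathsf{b}_\ieps$ with $|Q_\ieps(z_\ieps)| \geq \tfrac12 \|Q_\ieps\|_{L^\infty(\mathsf{b}_\ieps)}$, put $d_\ieps := |z_\ieps-x_\ieps|$, and rescale $\bar q_\ieps(x) := q_\ieps(x_\ieps + d_\ieps x)/q_\ieps(z_\ieps)$, giving $|\bar q_\ieps(x)| \lesssim |x|^\nu$ and $\bar q_\ieps(\xi_\ieps) = 1$ for some $\xi_\ieps \in \mathbb S^{N-1}$. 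Arguing by contradiction from $\|Q_\ieps\|_{L^\infty(\mathsf{b}_\ieps)} \gg \eps \mu_\eps^{-N/2+4-\nu} + \mu_\eps^{(N-2)/2}$, I would distinguish the same four regimes as in Proposition~\ref{proposition r eps}. Case~2b ($d_\ieps \sim 1$) is immediate from $|q_\ieps(z_\ieps)| \lesssim \mu_\eps^{(N-2)/2}$ via Proposition~\ref{proposition preliminaries multi}.(v). Case~3 ($\mu_\eps \sim d_\ieps$) is the core: the coefficient $d_\ieps^2 N(N+2) \bar B_\ieps^{4/(N-2)}$ converges to $N(N+2) B_{0,\beta_{i,0}}^{4/(N-2)}$, the rescaled inhomogeneity tends to zero, and the limit $\bar q_{i,0}$ solves the linearized bubble equation on $\R^N$ with growth $|x|^\nu$. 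Since $\nu \in (2,3) \setminus \N$, Proposition~\ref{proposition u = 0} forces $\bar q_{i,0} \equiv 0$, contradicting $\bar q_{i,0}(\xi_{i,0}) = 1$.

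The hard part lies in Cases~1 ($\mu_\eps \gg d_\ieps$) and 2a ($\mu_\eps \ll d_\ieps \ll 1$), where the linear coefficient degenerates and the limit $\bar q_{i,0}$ is merely harmonic on $\R^N \setminus \{0\}$; Bôcher plus Liouville with the growth bound then identifies $\bar q_{i,0}$ as a harmonic polynomial of degree $\leq 2$, and the conditions $\bar q_\ieps(0) = 0$ and $\nabla \bar q_\ieps(0) = 0$ (from $r_\ieps(x_\ieps)=\nabla r_\ieps(x_\ieps)=0$ together with $W(0) = \nabla W(0) = 0$) reduce this only to a harmonic, i.e.\ trace-free, quadratic form. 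Unlike in the $\theta<2$ analogue, this residue does not automatically vanish. To kill it, I would exploit the identity $-\Delta q_\ieps(x_\ieps) = 0$ — a direct consequence of the three PDEs at $x_\ieps$ combined with $(-\Delta W)(0) = -1$ — which matches the traces of the Hessians of $B_\ieps$ and $\tilde W_\ieps$; then promote Proposition~\ref{proposition preliminaries multi}.(iv) from $C^1_{\text{loc}}$ to $C^2_{\text{loc}}$ convergence by interior elliptic regularity, giving that the traceless part of $\nabla^2 u_\eps(x_\ieps)$ — and hence of $\nabla^2 q_\ieps(x_\ieps)$ — is $o(\mu_\ieps^{-(N+2)/2})$. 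At the relevant rescaling this forces $\nabla^2 \bar q_{i,0}(0) = 0$ in the two degenerate regimes, killing the remaining quadratic residue and completing the contradiction. All other error-term estimates in Cases~1 and 2a are direct transcriptions of the corresponding steps for $r_\ieps$, with the extra factor $d_\ieps$ from the Lipschitz gain providing precisely the slack required to reach $\nu \in (2,3)$.
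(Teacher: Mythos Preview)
Your overall strategy is correct and is precisely the paper's: the same weighted quantity $Q_\ieps$, the same contradiction argument by regimes on $d_\ieps$ versus $\mu_\eps$, and the same appeal to Proposition~\ref{proposition u = 0} in the critical regime $d_\ieps\sim\mu_\eps$. The paper is simply terser, recording that once the right-hand side of the rescaled equation is controlled by $(\eps\mu_\eps^{-N/2+4-\nu}+\mu_\eps^{(N-2)/2})/\|Q_\ieps\|_\infty$, the blow-up argument of Proposition~\ref{proposition r eps} concludes verbatim.

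Your ``hard part'' in Cases~1 and~2a, however, rests on a misconception, and the Hessian workaround you propose is unnecessary. You use the bound $|\bar q_{i,0}(x)|\le 2|x|^\nu$ only at infinity (via Liouville) and then try to pin down the behaviour at the origin through the pointwise conditions $\bar q_\ieps(0)=\nabla\bar q_\ieps(0)=0$. But the same inequality $|\bar q_{i,0}(x)|\le 2|x|^\nu$ holds for \emph{every} $x\neq 0$, since it holds for $\bar q_\ieps$ and passes to the limit pointwise. After B\^ocher removes the singularity, the harmonic extension $P$ is a polynomial of degree $\le 2$ satisfying $|P(x)|\le 2|x|^\nu$ for all $x$; dividing by $|x|^2$ and sending $x\to 0$ along an arbitrary direction (using $\nu>2$) kills the full quadratic part, not merely its trace. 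Thus the residual trace-free quadratic you worry about cannot survive, and the paper's one-line reference back to the $r_\ieps$ argument is already complete. Incidentally, your proposed fix is also not clearly sound as written: from the qualitative $o(\mu_\ieps^{-(N+2)/2})$ control on the traceless part of $\nabla^2 q_\ieps(x_\ieps)$ one cannot conclude that $d_\ieps^{2-\nu}\,o(\mu_\ieps^{-(N+2)/2})/\|Q_\ieps\|_\infty\to 0$ in the regime $d_\ieps\ll\mu_\eps$, because $d_\ieps^{2-\nu}$ may blow up faster than any prescribed power of $\mu_\eps$.
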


\begin{proof}
Let $Q_\ieps(x):= \frac{q_\ieps(x)}{|x-x_\ieps|^\nu}$,  fix a point $z_\ieps$ with $Q_\ieps(z_\ieps) \geq \frac{1}{2} \|Q_\ieps\|_{L^\infty(\mathsf{b}_\ieps)}$ and let $d_\ieps := |x_\ieps - z_\ieps|$. 
(Notice that, by \eqref{q derivative zero} and Taylor's theorem, we have $\Vert Q_\ieps\|_{L^\infty(\mathsf{b}_\ieps)} < \infty$.)
When $d_\ieps \gtrsim 1$, we have 
\begin{equation}
\label{q eps estimate d sim 1}
Q_\ieps(x) \lesssim  \frac{q_\ieps(z_\ieps)}{d_\ieps^\nu} \lesssim |B_\ieps(z_\ieps)| + |\eps \mu_\ieps^{-\frac{N}{2} +3}  W\left(\frac{z_\ieps-x_\ieps}{\mu_\ieps}\right)| \lesssim \mu_\eps^\frac{N-2}{2} + \eps \mu_\eps^{-\frac{N}{2}+3} \lesssim \mu_\eps^\frac{N-2}{2}, 
\end{equation} 
where we used Lemma \ref{lemma eps lesssim mu} and the fact that $W$ is bounded by Lemma \ref{lemma W}. So it remains to treat the case $d_\ieps = o(1)$ in the following. 

In the following, let us assume $N \geq 6$. Then $\frac{N+2}{N-2} \leq 2$. 
Using the definition of the $W_{\alpha}$ in \eqref{Wjk equation} and of $W_\ieps$ in \eqref{Wieps definition}, we find 
\begin{align*}
-\Delta W_\ieps - N(N+2) B_\ieps^\frac{4}{N-2} W_\ieps &= \mu_\ieps^\frac{N-2}{2} B_\ieps (\Delta u_\eps - \Delta B_\ieps)(x_\ieps) \\
&= - \mu_\ieps^\frac{N-2}{2} B_\ieps \eps V(x_\ieps) u(x_\ieps) = - \eps V(x_\ieps) B_\ieps. 
\end{align*} 
With this, the equation satisfied by $q_\ieps$ can be written as 
\[ -\Delta q_\ieps - N(N+2) B_\ieps^{\frac{4}{N-2}} q_\ieps = \eps B_\ieps (V(x_\ieps) - V(x)) - \eps V r_\ieps + \mathcal O(r_\ieps^\frac{N+2}{N-2}), \quad \text{ on } \mathsf{b}_\ieps. \]
(When $N =4, 5$, and hence $\frac{N+2}{N-2} > 2$, the last term need to be replaced by $\mathcal O\left(r_\ieps^2 B_\ieps^{\frac{6-N}{N-2}}\right)$.) 

Then $\bar{q}_\ieps(x) := \frac{q_\ieps(x_\ieps + d_\ieps x)}{q_\ieps(z_\ieps)}$ satisfies 
\begin{align}
&\qquad  -\Delta \bar{q}_\ieps - N(N+2) B_\ieps^{\frac{4}{N-2}} \bar{q}_\ieps \nonumber = \\
& \frac{d_\ieps^{2 - \nu}}{ \|Q_\ieps\|_{L^\infty(\mathsf{b}_\ieps)}} \left( \eps \bar{B}_\ieps (V(x_\ieps) - \bar{V})) - \eps \bar{V} \bar{r}_\ieps + \mathcal O(\bar{r}_\ieps^\frac{N+2}{N-2}) \right) \nonumber \\
\end{align}
and
\[ \bar{q}_\ieps(0) = \nabla \bar{q}_\ieps (0) = 0, \qquad  |\bar{q}_\ieps(x) | \leq |x|^\nu \quad \text{ on } B(d_\ieps^{-1} \delta, 0) , \]
By Proposition \ref{proposition r eps} with $\theta=\nu-1$, 
$$\vert \bar{r}_{i,\eps}(x)\vert \leq (\eps \mu_\eps^{-\frac{N}{2}+4-\nu} +\mu_\eps^\frac{N-2}{2})d_\eps^{\nu-1}\vert x\vert$$ 
Then by Lemma \ref{lemma eps lesssim mu}, and using $N\geq 6$ and $d_\ieps \lesssim 1$, we  see that $\vert \bar{r}_\eps\vert \leq 1$ for $\epsilon$ small, which gives
\begin{align}
\qquad  -\Delta \bar{q}_\ieps - N(N+2) B_\ieps^{\frac{4}{N-2}} \bar{q}_\ieps \nonumber&= \frac{1}{\|Q_\ieps\|_{L^\infty(\mathsf{b}_\ieps)}} \mathcal O\left(\eps \bar{B}_\ieps d_\ieps^{3-\nu}  +d_\ieps^{2-\nu}\vert \bar{r}_{i,\eps}\vert \right)\\
&= \frac{1}{\|Q_\ieps\|_{L^\infty(\mathsf{b}_\ieps)}} \mathcal O\left(\eps \bar{B}_\ieps d_\ieps^{3-\nu}  + \eps \mu_\eps^{-\frac{N}{2}+4-\nu} +\mu_\eps^\frac{N-2}{2} \right). \label{q eq error terms}
\end{align} 
For completeness, we show  how to bound the term $\mathcal O\left(r_\ieps^2 B_\ieps^{\frac{N+2}{N-2}-2}\right)$ that occurs for $N = 4,5$. We have, by Proposition \ref{proposition r eps} with $2 \theta \in [\nu - 2, \nu - 2 + 6 - N]$, 
\begin{align*}
 d_\ieps^{2 - \nu} r_\ieps^2 B_\ieps^{\frac{N+2}{N-2}-2} &\lesssim 
\begin{cases}
(\eps^2 \mu_\eps^{- N + 6 - 2 \theta} + \mu_\eps^\frac{N-2}{2})  d_\ieps^{2 - \nu + 2\theta} \mu_\eps^{- \frac{6-N}{2}} & \text{ if } d_\ieps \lesssim \mu_\ieps, \\
 (\eps^2 \mu_\eps^{- N + 6 - 2 \theta} + \mu_\eps^\frac{N-2}{2})  d_\ieps^{2 - \nu + 2\theta + N - 6} \mu_\eps^{ \frac{6-N}{2}} & \text{ if } d_\ieps \gtrsim \mu_\ieps
\end{cases}  \\
&\lesssim \eps^2 \mu_\eps^{-\frac{N}{2}  + 5 - \nu} + \mu_\eps^\frac{N-2}{2}. 
\end{align*}
Let us now estimate the remaining first term on the right side of \eqref{q eq error terms}. 
By \eqref{B bar bounds} and the fact that $2 < \nu < 3$, we have 
\[ 
\eps \bar{B}_\ieps d_\ieps^{3-\nu} \lesssim 
\begin{cases}
\eps \mu_\eps^{-\frac{N-2}{2}} d_\eps^{3 - \nu} \lesssim  \eps \mu_\eps^{-\frac{N}{2} + 4 - \nu} & \text{ if } d_\ieps \lesssim \mu_\ieps, \\
\eps \mu_\eps^\frac{N-2}{2} d_\ieps^{-N+5-\nu} \lesssim \eps \mu_\eps^{-\frac{N}{2}+ 4 - \nu} & \text{ if } \mu_\ieps \lesssim d_\ieps << 1.
\end{cases}
\]

In both cases $d_\eps \lesssim \mu_\eps$ and $o(1) = d_\eps \gtrsim \mu_\eps$, the blow-up argument detailed in the proof of Proposition \ref{proposition r eps} now yields that $Q_\ieps$ is bounded by a constant times $\eps \mu_\eps^{-\frac N2 +4 - \nu} + \mu_\eps^\frac{N-2}{2}$. Taking into account \eqref{q eps estimate d sim 1}, we get the conclusion. 
\end{proof}

\subsection{The bound on $p_\ieps$}

\begin{proposition}
\label{proposition p}
Let $i= 1,...,n$ and let $p_\ieps$ be defined by \eqref{p definition}. As $\eps \to 0$, for all $ \nu  \in  (3,4)$,
\[ |p_\ieps(x)| \lesssim (\eps \mu_\eps^{-\frac{N}{2}+5-\nu} +\mu_\eps^\frac{N-2}{2}) |x-x_\ieps|^\nu  \qquad \text{ for all } x \in \mathsf{b}_\ieps. \]
\end{proposition}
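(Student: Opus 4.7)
The plan is to repeat the blow-up scheme of Propositions \ref{proposition r eps} and \ref{proposition q}, exploiting the fact that the ansatz $W_2$ is designed precisely to absorb the linear-in-$(x-x_\ieps)$ Taylor term of $V$ that drove the bound in Proposition \ref{proposition q}. First, set $P_\ieps(x) := p_\ieps(x)/|x-x_\ieps|^\nu$, choose $z_\ieps \in \mathsf b_\ieps$ with $P_\ieps(z_\ieps) \geq \tfrac12\|P_\ieps\|_{L^\infty(\mathsf b_\ieps)}$, and let $d_\ieps := |z_\ieps - x_\ieps|$. The regime $d_\ieps \gtrsim 1$ is dispatched by the same direct estimate as in \eqref{q eps estimate d sim 1}, using boundedness of $W$ and $W_2$ together with Lemma \ref{lemma eps lesssim mu}. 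In the remaining case $d_\ieps = o(1)$, the rescaling $\bar p_\ieps(x) := p_\ieps(x_\ieps + d_\ieps x)/p_\ieps(z_\ieps)$ yields $\bar p_\ieps(0) = \nabla \bar p_\ieps(0) = 0$ and $|\bar p_\ieps(x)| \lesssim |x|^\nu$ on $B(0, \delta_0 d_\ieps^{-1})$.

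The key step is to derive the PDE for $p_\ieps$. Because the spherical-harmonic-of-degree-one ansatz $W_2(|y|)\, y_j/|y|$ is, by its defining ODE, a solution of the linearized equation $-\Delta w - N(N+2) B^{4/(N-2)} w = -B(|y|)\, y_j$ on $\mathbb R^N$, an explicit computation (analogous to the derivation of the equation for $q_\ieps$ in the proof of Proposition \ref{proposition q}) gives
\[
-\Delta p_\ieps - N(N+2) B_\ieps^{4/(N-2)} p_\ieps = \mathcal{O}\bigl(\eps B_\ieps |x-x_\ieps|^2\bigr) - \eps V r_\ieps + \mathcal{O}\bigl(r_\ieps^{(N+2)/(N-2)}\bigr)
\]
on $\mathsf b_\ieps$, where the last term is to be interpreted as $\mathcal O(r_\ieps^2 B_\ieps^{(6-N)/(N-2)})$ in dimensions $N \in \{4,5\}$. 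The novelty compared to the equation for $q_\ieps$ is the extra factor $|x-x_\ieps|$ in the first source term, which is precisely what upgrades the exponent from $4-\nu$ to $5-\nu$.

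Arguing by contradiction, assume $\|P_\ieps\|_{L^\infty(\mathsf b_\ieps)} \gg \eps \mu_\eps^{-N/2+5-\nu} + \mu_\eps^{(N-2)/2}$. Using \eqref{B bar bounds} and $|p_\ieps(z_\ieps)| \gtrsim \|P_\ieps\|_{L^\infty(\mathsf b_\ieps)}\, d_\ieps^\nu$, a case distinction on $d_\ieps$ versus $\mu_\eps$ (mirroring Cases 1--3 of Proposition \ref{proposition r eps}) produces for the new quadratic Taylor source
\[
\frac{d_\ieps^2}{|p_\ieps(z_\ieps)|}\, \eps\, \bar B_\ieps\, d_\ieps^2|x|^2 \lesssim \frac{\eps \mu_\eps^{-N/2+5-\nu}}{\|P_\ieps\|_{L^\infty(\mathsf b_\ieps)}}\, |x|^2,
\]
uniformly on compacts of $\mathbb R^N\setminus\{0\}$ (respectively of $\mathbb R^N$ in the critical regime $d_\ieps \sim \mu_\eps$). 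The term $\eps V r_\ieps$ is controlled via Proposition \ref{proposition r eps} with $\theta = \nu - 2 \in (1,2)$ together with $\eps \lesssim \mu_\eps^{N-4}$ from Lemma \ref{lemma eps lesssim mu}, and the nonlinear remainder is bounded using Proposition \ref{proposition q} (for $N \geq 6$) or the variant estimate on $r_\ieps^2 B_\ieps^{(6-N)/(N-2)}$ already used in the proof of Proposition \ref{proposition q} (for $N \in \{4,5\}$); both contributions are of the same order $o(1)$ under the contradiction hypothesis.

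Finally, elliptic regularity gives convergence of $\bar p_\ieps$ to a function $\bar p_{i,0}$ which solves either $-\Delta \bar p_{i,0} = 0$ on $\mathbb R^N \setminus \{0\}$ (when $d_\ieps/\mu_\eps \to 0$ or $\to \infty$) or the full linearized equation $-\Delta \bar p_{i,0} = N(N+2) B_{0,\beta_{i,0}}^{4/(N-2)} \bar p_{i,0}$ on $\mathbb R^N$ (when $d_\ieps \sim \mu_\eps$). Since $\nu \in (3,4)$ is non-integer, and since $\bar p_{i,0}(0) = \nabla \bar p_{i,0}(0) = 0$ with $|\bar p_{i,0}(x)| \lesssim |x|^\nu$, Bôcher's and Liouville's theorems in the first scenario and Proposition \ref{proposition u = 0} in the second force $\bar p_{i,0} \equiv 0$, contradicting $\bar p_\ieps((z_\ieps - x_\ieps)/d_\ieps) \to 1$. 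The main obstacle lies in the delicate bookkeeping required in dimensions $N = 4, 5$ to dominate the nonlinear correction $r_\ieps^2 B_\ieps^{(6-N)/(N-2)}$ by the sharper estimate on $r_\ieps$ from Proposition \ref{proposition r eps}, which dictates the optimal choice of intermediate exponent $\theta$ and the use of rearrangement-type inequalities as in the proof of Proposition \ref{proposition r eps}.
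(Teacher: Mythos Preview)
Your proposal is essentially correct and follows the same blow-up scheme as the paper. One minor slip: $W_2$ is \emph{not} bounded (by Lemma \ref{lemma U} it grows linearly, $W_2(R)\sim \tfrac{a_N}{N}R$), so the $d_\ieps\gtrsim 1$ case cannot be dispatched by ``boundedness of $W_2$''; however, the conclusion still holds because $\eps\,\mu_\ieps^{-N/2+4}\,W_2(d_\ieps/\mu_\ieps)\lesssim \eps\,\mu_\ieps^{-N/2+3}\lesssim \mu_\eps^{(N-2)/2}$ via Lemma \ref{lemma eps lesssim mu}. The only other difference from the paper is cosmetic: you control $\eps V r_\ieps$ directly through Proposition \ref{proposition r eps} with $\theta=\nu-2$, whereas the paper splits $r_\ieps=W_\ieps+q_\ieps$ and invokes Proposition \ref{proposition q} with exponent $\nu-1$ on $q_\ieps$; both routes give the same bound, and no rearrangement inequalities are needed here.
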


\begin{proof}
The proof works exactly the same than the one of Propositions \ref{proposition r eps} and \ref{proposition q}. There is only one subtility we point out, the rest is exactly the same.
Let $P_\ieps(x):= \frac{p_\ieps(x)}{|x-x_\ieps|^\nu}$,  fix a point $z_\ieps$ with $P_\ieps(z_\ieps) \geq \frac{1}{2} \|P_\ieps\|_{L^\infty(\mathsf{b}_\ieps)}$ and let $d_\ieps := |x_\ieps - z_\ieps|$. 
(Notice that, by \eqref{p derivative zero} and Taylor's theorem, we have $\Vert P_\ieps\|_{L^\infty(\mathsf{b}_\ieps)} < \infty$.)

When $d_\ieps \gtrsim 1$, we have 
\begin{equation}
P_\ieps(x) \lesssim \mu_\eps^\frac{N-2}{2}, 
\end{equation} 
So it remains to treat the case $d_\ieps = o(1)$ in the following. We also assume $N \geq 6$. Then the equation satisfied by $p_\ieps$ can be written as 
\begin{equation}
\begin{split}
-\Delta p_\ieps - N(N+2) B_\ieps^{\frac{4}{N-2}} p_\ieps &= \eps B_\ieps (V(x_\ieps)+\nabla V(x_{i.\eps}.x - V(x))\\ 
&- \eps V (W_{i,\eps}+q_{i,\eps}) + \mathcal O(r_\ieps^\frac{N+2}{N-2}), \quad \text{ on } \mathsf{b}_\ieps.
\end{split}
\end{equation} 
(When $N =4, 5$, and hence $\frac{N+2}{N-2} > 2$, the last term need to be replaced by $\mathcal O\left(r_\ieps^2 B_\ieps^{\frac{N+2}{N-2}-2}\right)$. This term can be estimated identically to the proof of Proposition \ref{proposition q}. Notice that the range $2 \theta \in [\nu - 2, \nu - 2 + 6 - N]$ is still compatible with $\theta \in (0,2)$ and $\nu \in (3,4)$, and that the resulting bound $\eps^2 \mu_\eps^{-\frac{N}{2} + 5 - \nu} + \mu_\eps^\frac{N-2}{2}$ is strong enough also for the present case.) 

Then $\bar{p}_\ieps(x) := \frac{p_\ieps(x_\ieps + d_\ieps x)}{p_\ieps(z_\ieps)}$ satisfies 
\begin{align}
&\qquad  -\Delta \bar{p}_\ieps - N(N+2) B_\ieps^{\frac{4}{N-2}} \bar{p}_\ieps \nonumber \\
&=  \frac{d_\ieps^{2 - \nu}}{ \|P_\ieps\|_{L^\infty(\mathsf{b}_\ieps)}} \left( \eps \bar{B}_\ieps (\bar{V}(0)+\nabla\bar{V}(0) \cdot x - \bar{V})) - \eps \bar{V} \bar{W}_\ieps - \eps \bar{V} \bar{q}_\ieps+ \mathcal O((\bar{W}_{i,\eps}+\bar{q}_\ieps)^\frac{N+2}{N-2} \right) \nonumber 
\end{align}
and
\[ \bar{p}_\ieps(0) = \nabla \bar{p}_\ieps (0) = 0, \qquad  |\bar{p}_\ieps(x) | \leq |x|^\nu \quad \text{ on } B(d_\ieps^{-1} \delta, 0) . \]

By Proposition \ref{proposition q} applied with exponent $\nu-1 \in (2,3)$, 
\begin{equation}
\label{qest}
\vert \bar{q}_{i,\eps}(x)\vert \leq (\eps \mu_\eps^{-\frac{N}{2}+5-\nu} +\mu_\eps^\frac{N-2}{2})d_\eps^{\nu-1}
\end{equation}
hence, since $N\geq6$, as $\bar{W}_{i,\eps}$, $\vert \bar{q}_{i,\eps}(x)\vert\leq 1$ for $\epsilon$ small enough. Then
\begin{align}
  -\Delta \bar{p}_\ieps - N(N+2) B_\ieps^{\frac{4}{N-2}} \bar{p}_\ieps =  \frac{1}{ \|P_\ieps\|_{L^\infty(\mathsf{b}_\ieps)}} \left( \eps d_\ieps^{4 - \nu} \bar{B}_\ieps + \eps \mu_\eps^{-\frac{N}{2}+5-\nu} +\mu_\eps^\frac{N-2}{2} \right) \nonumber  .
\end{align}
Moreover we easily check, since $W(0)=\nabla W(0)=0$, that 
$$d_\ieps^{2 - \nu}\vert \bar{W}_{i,\eps}\vert= \mathcal O( \eps \mu^{-\frac{N}{2}+5-\nu})$$
which gives with \eqref{qest}
\begin{align*}
  -\Delta \bar{p}_\ieps - N(N+2) B_\ieps^{\frac{4}{N-2}} \bar{p}_\ieps  =  \frac{1}{ \|P_\ieps\|_{L^\infty(\mathsf{b}_\ieps)}} \left( \eps d_\ieps^{4 - \nu} \bar{B}_\ieps + \eps \mu_\eps^{-\frac{N}{2}+5-\nu} +\mu_\eps^\frac{N-2}{2} \right)   .
\end{align*}

Let us now estimate the remaining first term on the right side of \eqref{q eq error terms}. 
By \eqref{B bar bounds} and the fact that $3 < \nu < 4$, we have 
\[ 
\eps \bar{B}_\ieps d_\ieps^{4-\nu} \lesssim 
\begin{cases}
\eps \mu_\eps^{-\frac{N-2}{2}} d_\eps^{4 - \nu} \lesssim  \eps \mu_\eps^{-\frac{N}{2} + 5 - \nu} & \text{ if } d_\ieps \lesssim \mu_\ieps, \\
\eps \mu_\eps^\frac{N-2}{2} d_\ieps^{-N+6-\nu} \lesssim \eps \mu_\eps^{-\frac{N}{2}+ 5 - \nu} & \text{ if } \mu_\ieps \lesssim d_\ieps << 1.
\end{cases}
\]

In both cases $d_\eps \lesssim \mu_\eps$ and $o(1) = d_\eps \gtrsim \mu_\eps$, the blow-up argument detailed in the proof of Proposition \ref{proposition r eps} now yields that $P_\ieps$ is bounded by a constant times $\eps \mu_\eps^{-\frac N2 +5 - \nu} + \mu_\eps^\frac{N-2}{2}$.
\end{proof}

  \section{The main expansions}
\label{section main expansion}

We will also need the matrix $\tilde{M}^l(\bm{x}) \in \R^{n \times n} = (\tilde{m}_{ij}^l(\bm{x}))_{i,j = 1}^n$ with entries
\begin{equation}
\label{m ij tilde definition}
\tilde{m}_{ij}^l(\bm{x}) := 
\begin{cases}
\partial_l \phi(x_i) & \text{ for } i = j, \\
- 2 \partial_l^x G(x_i, x_j) & \text{ for } i \neq j. 
\end{cases}
\end{equation}

Recall that the matrix $M(\bm x)$ has been defined in \eqref{m ij definition}. 

The main results of this section are collected in the following two propositions. 

\begin{proposition}
\label{proposition expansion}
If $N \geq 5$, as $\eps \to 0$, 
\begin{equation}
\label{expansion M proposition}
 \sum_{j} m_{ij}(\bm{x}_\eps) \mu_\jeps^{\frac{N-2}{2}} = - d_N (V(x_\ieps) +o(1)) \eps \mu_\ieps^{-\frac{N}{2} + 3} +  \mathcal O(\mu_\eps^\frac{N+2}{2})
\end{equation}
where $d_N$ is given by \eqref{d_N constant}.   

If $N = 4$, as $\eps \to 0$, 
\begin{equation}
\label{expansion M proposition N =4}
  \sum_{j} m_{ij}(\bm{x}_\eps) \mu_\jeps = - \frac{1}{8 \pi^2} (V(x_\ieps) +o(1)) \eps \mu_\ieps \ln (\mu_\ieps^{-1}) +  \mathcal O(\mu_\eps^3)
\end{equation}
\end{proposition}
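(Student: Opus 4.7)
The plan is to apply Green's representation to $u_\eps$ at the concentration point $x_\ieps$, decompose the resulting integral into contributions from the bubble regions and the exterior, and identify the leading-order cancellation responsible for isolating the matrix $M$. Since $u_\eps = 0$ on $\partial \Omega$, I would begin from
\[
\mu_\ieps^{-(N-2)/2} = u_\eps(x_\ieps) = \int_\Omega G(x_\ieps, y) \bigl[N(N-2) u_\eps^{(N+2)/(N-2)}(y) - \eps V(y) u_\eps(y)\bigr] \diff y
\]
and split $\Omega$ into $\bigcup_j \mathsf b_\jeps$ and an exterior region $\Omega_{\mathrm{far}}$.

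On $\Omega_{\mathrm{far}}$, Proposition \ref{proposition preliminaries multi}(v)--(vi) gives $|u_\eps| \lesssim \mu_\eps^{(N-2)/2}$, contributing $\mathcal O(\mu_\eps^{(N+2)/2})$. On each $\mathsf b_\jeps$ with $j \neq i$ the Green's function $G(x_\ieps, \cdot)$ is smooth: Taylor-expanding it around $x_\jeps$ and writing $u_\eps = B_\jeps + r_\jeps$ with the bounds of Propositions \ref{proposition r eps}--\ref{proposition q} produces the leading term $(N-2)\omega_{N-1}\, G(x_\ieps, x_\jeps)\, \mu_\jeps^{(N-2)/2}$ (using $\int_{\R^N} N(N-2) B^{(N+2)/(N-2)} = (N-2)\omega_{N-1}$ and the vanishing of the first-order Taylor term by radial symmetry). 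On the diagonal ball $\mathsf b_\ieps$ I would decompose $G(x_\ieps, y) = \Gamma(x_\ieps - y) - H(x_\ieps, y)$ with $\Gamma(x) := 1/((N-2)\omega_{N-1}|x|^{N-2})$. The $\Gamma$ part applied to $-\Delta B_\ieps = N(N-2) B_\ieps^{(N+2)/(N-2)}$ reproduces exactly $B_\ieps(x_\ieps) = \mu_\ieps^{-(N-2)/2}$ (up to tails of order $\mu_\ieps^{(N+2)/2}$), which \emph{cancels the left-hand side}, while the $H$ part contributes $-(N-2)\omega_{N-1}\, \phi(x_\ieps)\, \mu_\ieps^{(N-2)/2}$ by Taylor-expanding $H(x_\ieps, \cdot)$ at $x_\ieps$.

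After this main cancellation, what remains comes from the $\eps V u_\eps$ term and from the linear correction of $u_\eps^{(N+2)/(N-2)}$ about $B_\ieps$ carried by $W_\ieps$. Using the rescaled equation $-\Delta W_\ieps - N(N+2) B_\ieps^{4/(N-2)} W_\ieps = -\eps V(x_\ieps) B_\ieps$, these two pieces combine into a Green's integral which, after the change of variable $z = \mu_\ieps^{-1}(y - x_\ieps)$, reduces (modulo lower order) to
\[
-\eps V(x_\ieps)\,\mu_\ieps^{-N/2+3} \int_{\R^N} \Gamma(z)\bigl[B(z) + N(N+2) B^{4/(N-2)}(z) W(z)\bigr] \diff z,
\]
which is finite for $N \geq 5$ and, once simplified via the $W$-equation and evaluated against the Beta function, equals $-d_N V(x_\ieps)\,\eps \mu_\ieps^{-N/2+3} \cdot (N-2)\omega_{N-1}$ with $d_N$ as in \eqref{d_N constant}. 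Collecting all pieces and dividing by $(N-2)\omega_{N-1}$ yields \eqref{expansion M proposition}, using $m_{ii} = \phi(x_\ieps)$ and $m_{ij} = -G(x_\ieps, x_\jeps)$ for $j \neq i$. For $N = 4$ the same analysis applies, but $\int B\Gamma$ diverges logarithmically at infinity; truncation at scale $|z| \sim \delta_0\mu_\ieps^{-1}$ produces the extra $\ln(\mu_\ieps^{-1})$ factor and the constant $1/(8\pi^2)$.

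The main obstacle will be twofold. First, I have to verify that every remaining error term (tails of the bubble integrals; the quadratic terms in the pointwise linearization of $u_\eps^{(N+2)/(N-2)}$; the Taylor remainder $V(y) - V(x_\ieps)$; and the $q_\ieps$ contribution to $u_\eps$) can be absorbed into $o(\eps\mu_\ieps^{-N/2+3}) + \mathcal O(\mu_\eps^{(N+2)/2})$; this is exactly where the sharp bounds of Propositions \ref{proposition r eps}--\ref{proposition p} and Lemma \ref{lemma eps lesssim mu} enter crucially. Second, extracting the precise constant $d_N$ requires a careful treatment of the boundary-at-infinity terms appearing in $\int_{\R^N} \Gamma\,(-\Delta W)$, which is \emph{not} simply $W(0) = 0$ since $W$ does not decay sufficiently fast at infinity (behaving like $|z|^{-(N-4)}$, or tending to a nonzero constant in $N=6$). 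Tracking this asymptotic contribution is the delicate step on which the exact expression \eqref{d_N constant} rests.
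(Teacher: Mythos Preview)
Your outline is essentially the paper's proof: test the equation against $G(\cdot,x_\ieps)$, split into the balls $\mathsf b_\jeps$, cancel the divergent $\mu_\ieps^{-(N-2)/2}$, extract the entries of $M(\bm x_\eps)$ from the $H$- and off-diagonal $G$-contributions, and combine the $\eps V B_\ieps$ piece with the $N(N+2)B_\ieps^{4/(N-2)}W_\ieps$ piece via the $W$-equation into $\int\Gamma(-\Delta W)$, whose value is read off from the large-$R$ asymptotics of $W$ in Lemma~\ref{lemma W}. Two small corrections: the combined integrand should be $\Gamma\bigl[-B + N(N+2)B^{4/(N-2)}W\bigr]=\Gamma(-\Delta W)$ (your $W$-term carries the wrong sign), and by Lemma~\ref{lemma W} the function $W$ tends to the \emph{constant} $c_N/(N-2)$ for every $N\ge5$, not $|z|^{-(N-4)}$; also, where you Taylor-expand $H(\cdot,x_\ieps)$ and $G(\cdot,x_\ieps)$ on $\mathsf b_\jeps$, the paper instead invokes the mean-value property of these harmonic functions against the radial weight $B_\jeps^{(N+2)/(N-2)}$, which evaluates those integrals exactly and spares you the second-order Taylor remainder estimate.
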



\begin{proposition}
\label{proposition expansion gradient}
If $N \geq 5$, as $\eps \to 0$, for every $l = 1,...,N$ and every $\delta > 0$,
\begin{equation}
\label{expansion gradient}
\sum_{j} \tilde{m}^l_{ij}(\bm{x}_\eps) \mu_\jeps^\frac{N-2}{2} = - d_N \frac{N-2}{2} \eps \mu_\eps^{-\frac{N}{2}+ 3} (\partial_{x_l} V(x_\ieps) + o(1)) +  \mathcal O(\mu_\eps^{\frac{N+2}{2} - \delta}),
\end{equation}
where $d_N$ is given by \eqref{d_N constant}.

If $N = 4$,  as $\eps \to 0$, for every $l = 1,...,N$ and every $\delta > 0$, 
\[ \sum_{j} \tilde{m}^l_{ij}(\bm{x}_\eps) \mu_\jeps^\frac{N-2}{2} = -\frac{1}{8 \pi^2} (\partial_{x_l} V(x_\ieps)+o(1)) \eps \mu_\eps \ln (\mu_\eps^{-1})  + \mathcal O(\mu_\eps^{3-\delta}). \]
\end{proposition}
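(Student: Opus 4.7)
The proof runs parallel to that of Proposition \ref{proposition expansion}, but replaces the dilation Pohozaev identity by the \emph{translation} Pohozaev identity. Multiplying \eqref{brezis peletier additive} by $\partial_l u_\eps$ and integrating by parts on the small ball $\mathsf b_\ieps$, a standard manipulation yields
\begin{equation}
\label{translation pohozaev plan}
-\frac{\eps}{2} \int_{\mathsf b_\ieps} \partial_l V \, u_\eps^2 \, dx = \int_{\partial \mathsf b_\ieps} \left[ \partial_\nu u_\eps \, \partial_l u_\eps - \frac{\nu_l}{2} |\nabla u_\eps|^2 + \frac{(N-2)^2}{2} \nu_l u_\eps^{\frac{2N}{N-2}} - \frac{\eps \nu_l}{2} V u_\eps^2 \right] d\sigma.
\end{equation}
The plan is to evaluate the bulk and boundary sides separately, and equate.

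For the bulk side I would Taylor-expand $V(x) = V(x_\ieps) + \nabla V(x_\ieps) \cdot (x - x_\ieps) + O(|x-x_\ieps|^2)$ and write $u_\eps = B_\ieps + r_\ieps$. The zeroth-order Taylor coefficient produces the main term $-\frac{\eps}{2} \partial_l V(x_\ieps) \int B_\ieps^2$; the linear Taylor term vanishes against the radial $B_\ieps^2$; and the quadratic Taylor remainder, together with the cross term $\int B_\ieps r_\ieps$ and the self-term $\int r_\ieps^2$, each contribute errors of admissible size after multiplication by $\eps$, using Proposition \ref{proposition r eps} combined with Lemma \ref{lemma eps lesssim mu}. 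For $N \geq 5$ one recognizes $\int_{\R^N} B^2 = \tfrac{\omega_{N-1}}{2} \Gamma(N/2) \Gamma((N-4)/2)/\Gamma(N-2)$, which matches the constant $d_N$ of \eqref{d_N constant} up to the explicit factor needed in the final formula; when $N = 4$ the same computation introduces a factor $\ln(\mu_\eps^{-1})$ from the borderline integral.

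For the boundary side, since $\partial \mathsf b_\ieps$ is separated from every concentration point, I would substitute the $C^1$ convergence $\mu_{1,\eps}^{-(N-2)/2} u_\eps \to G_{\bm x, \bm \nu}$ of Proposition \ref{proposition preliminaries multi}.\eqref{item tilde G}, so the boundary integral is $\mu_{1,\eps}^{N-2}$ times the corresponding integral in $G_{\bm x, \bm \nu}$, plus subleading pieces (in particular the $u_\eps^{2N/(N-2)}$ term contributes only $\mathcal O(\mu_\eps^N)$). Writing $G_{\bm x, \bm \nu}(x) = \nu_i \Gamma(x - x_\ieps) + f_i(x)$ near $x_\ieps$ with $\Gamma(y) := ((N-2)\omega_{N-1}|y|^{N-2})^{-1}$ and $f_i$ harmonic in $\mathsf b_\ieps$, three observations collapse the limit to a point value: the rotational symmetry of $\Gamma$ kills the pure $\Gamma$ contribution, the stress–energy identity $\partial_j(\partial_j u \, \partial_l u - \tfrac{1}{2} \delta_{jl} |\nabla u|^2) = (\Delta u)\,\partial_l u$ together with $\Delta f_i = 0$ kills the pure $f_i$ contribution, and the mean-value property applied to the harmonic function $\partial_l f_i$ collapses the cross terms to $-\nu_i \partial_l f_i(x_\ieps)$. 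Using $G(x,y) = G(y,x)$ to identify $\partial_l f_i(x_\ieps) = -\tfrac{\nu_i}{2} \partial_l \phi(x_\ieps) + \sum_{j \neq i} \nu_j \partial_l^x G(x_\ieps, x_\jeps)$, together with the matching relation $\nu_i = (N-2)\omega_{N-1}(\mu_\ieps/\mu_{1,\eps})^{(N-2)/2}$, the boundary integral equals $\tfrac{(N-2)^2 \omega_{N-1}^2}{2} \mu_\ieps^{(N-2)/2} \sum_j \tilde m_{ij}^l(\bm x_\eps) \mu_\jeps^{(N-2)/2}$, up to lower-order error.

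Equating both sides of \eqref{translation pohozaev plan} and simplifying the constants yields the claimed expansion, with the $N = 4$ case handled identically modulo the $\ln(\mu_\eps^{-1})$ factor. The main technical obstacle is controlling the boundary integral at the sharp rate $\mathcal O(\mu_\eps^{(N+2)/2 - \delta})$: since the Pohozaev integrand involves first derivatives of $u_\eps$, the qualitative $C^1$ convergence in Proposition \ref{proposition preliminaries multi} is not enough and one must feed in a quantitative $C^1$-type rate of convergence to $G_{\bm x, \bm \nu}$ on $\partial \mathsf b_\ieps$, drawn from the refined pointwise asymptotics of Propositions \ref{proposition r eps}--\ref{proposition p}. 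The loss of the factor $\mu_\eps^\delta$ reflects the fact that these bounds are Hölder with a non-integer exponent $\nu$ strictly below the next integer, which prevents closing the estimate without this small sacrifice.
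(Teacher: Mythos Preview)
Your route via the translation Pohozaev identity on $\mathsf b_\ieps$ is different from the paper's. The paper does \emph{not} use Pohozaev for either Proposition~\ref{proposition expansion} or Proposition~\ref{proposition expansion gradient}; instead it multiplies \eqref{brezis peletier additive} by $G(x,x_\ieps)$ (respectively $\partial_{y_l} G(x,x_\ieps)$), integrates over all of $\Omega$, and reads off the matrix $M$ (respectively $\tilde M^l$) directly from the Taylor expansion of the regular part $H$ and the off-diagonal Green values. Because the test function absorbs one derivative, all the remainder estimates needed are the $C^0$ bounds on $r_\ieps$, $q_\ieps$, $p_\ieps$ from Propositions~\ref{proposition r eps}--\ref{proposition p}; no gradient control on the error enters.

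Your scheme is in principle sound and the constants match, but the step you flag as the ``main technical obstacle'' is a genuine gap as written. Propositions~\ref{proposition r eps}--\ref{proposition p} bound $u_\eps - B_\ieps$ (and refinements) in $C^0$ only, and on $\partial \mathsf b_\ieps$ the remainder $r_\ieps$ is itself of order $\mu_\eps^{(N-2)/2}$, the same size as the Green contribution you need to extract; those propositions therefore do not identify the leading part of $r_\ieps$ at the boundary, let alone its gradient. To close your argument you would need a separate quantitative statement of the form $\mu_{1,\eps}^{-(N-2)/2} u_\eps = G_{\bm x_\eps,\bm\nu_\eps} + \mathcal O(\mu_\eps^{2-\delta})$ in $C^1$ away from the concentration points, which one can obtain from a Green representation of $u_\eps$ combined with interior elliptic estimates, but this is an additional piece of analysis, not something ``drawn from'' Propositions~\ref{proposition r eps}--\ref{proposition p}. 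The paper's choice of test function is precisely designed to bypass this difficulty.
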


\begin{proof}
[Proof of Proposition \ref{proposition expansion}]
We multiply equation \eqref{brezis peletier additive} by $G(x, x_\ieps)$ and integrate over $x$. Then the left side becomes 
\begin{equation}
\begin{split}
\label{leftside}
&\quad \int_\Omega (-\Delta u_\eps + \eps V u_\eps) G(x, x_\ieps) \diff x \\
& = u_\eps(x_\ieps) + \mu_\ieps^{-\frac N2 + 3} \eps V(x_\ieps) \int_{B(0, \delta_0\mu_\ieps^{-1})} B \frac{1}{\omega_{N-1}(N-2)|z|^{N-2}} \diff z  + o(\eps  \mu_\eps^{-\frac N2 + 3}) . 
\end{split}
\end{equation}

The right side is 
\begin{align}
&N(N-2) \int_\Omega u_\epsilon^\frac{N+2}{N-2} G(x,x_\ieps) \diff x \nonumber = N(N-2) \sum_j \int_{\mathsf b_\jeps} B_\jeps^\frac{N+2}{N-2}  G(x,x_\jeps) \diff x \label{proof expansion right side} \\
& \quad +  N(N+2) \int_{\mathsf b_\ieps} B_\ieps^{\frac{4}{N-2}} W_\ieps \frac{1}{\omega_{N-1}(N-2) |x-x_\ieps|^{N-2}} \diff x \nonumber  \\
& \quad + \mathcal O \Big( \int\displaylimits_{\mathsf b_\ieps} (B_\ieps^{\frac{4}{N-2}}  q_\ieps +|r_\ieps|^\frac{N+2}{N-2}  ) G(x,x_\ieps) \diff x + \int_{\mathsf b_\ieps} B_\ieps^\frac{4}{N-2} |r_\ieps| H(\cdot, x_\ieps) \diff x  \\
&\quad + \sum_{j \neq i} \int\displaylimits_{\mathsf b_\jeps} B_\jeps^{\frac{4}{N-2}} |r_\jeps| G(\cdot,x_\ieps) \diff x   +  \int\displaylimits_{\Omega \setminus \bigcup_j \mathsf b_\jeps} u_\eps^\frac{N+2}{N-2} G(\cdot, x_\ieps)\diff x  \Big) \nonumber 
\end{align}
When $N = 4,5$, similarly to the remark in the proof of Proposition \ref{proposition q}, the term $r_\ieps^\frac{N+2}{N-2}$ in the above error term needs to be replaced by $B_\ieps^{\frac{N+2}{N-2}-2} r_\ieps^2$. The ensuing estimates are very similar to the case $N \geq 6$ presented below and we leave the details to the reader. 

Let us first evaluate the two main terms in \eqref{proof expansion right side}. We have
\begin{align*}
&  \sum_j \int_{\mathsf b_\jeps} B_\jeps^\frac{N+2}{N-2}  G(x,x_\jeps) \diff x \\
&\quad = \int_{\mathsf b_\ieps} B_\ieps^\frac{N+2}{N-2} \left(\frac{1}{\omega_{N-1}(N-2) |x-x_\ieps|^{N-2}} - H(x, x_\ieps)\right) \diff x + \sum_{j \neq i }  \int_{\mathsf b_\ieps} B_\jeps^\frac{N+2}{N-2} G(x, x_\ieps) \diff x. 
\end{align*}
We compute the terms on the right side separately. First, by direct computation,
\[ \int_{\mathsf b_\ieps} B_\ieps^\frac{N+2}{N-2} \frac{N}{\omega_{N-1} |x-x_\ieps|^{N-2}}\diff x = \mu_\ieps^{-\frac{N-2}{2}} + \mathcal O(\mu_\eps^\frac{N+2}{2}). \]
Next, by radial symmetry of $B$ and the mean value property of the harmonic function $x \mapsto H(x, x_\ieps)$ , it is easy to see that
\begin{align*}
&\quad -N(N-2) \int_{\mathsf b_\ieps} B_\ieps^\frac{N+2}{N-2} H(x, x_\ieps) \diff x  = -N(N-2) \phi(x_\ieps) \int_{\mathsf b_\ieps} B_\ieps^\frac{N+2}{N-2}  \diff x \\
 &= -\omega_{N-1}(N-2) \mu_\ieps^\frac{N-2}{2} \phi(x_\ieps)+\mathcal O(\mu_\eps^\frac{N+2}{2}).  
\end{align*}  
Finally, by a similar argument, using that $G(x,x_\ieps)$ is harmonic for $x \in \mathsf b_\jeps$, for every $j \neq i$ we have 
\begin{align*}
&\quad N(N-2) \int_{\mathsf b_\jeps} B_\jeps^\frac{N+2}{N-2} G(x, x_\ieps) \diff x = N(N-2)  G(x_\jeps, x_\ieps) \int_{\mathsf b_\jeps} B_\jeps^\frac{N+2}{N-2} \diff x \\
& = \omega_{N-1}(N-2) \mu_\jeps^\frac{N-2}{2} G(x_\jeps, x_\ieps)  + \mathcal O(\mu_\eps^\frac{N+2}{2}).
\end{align*}
This completes the computation of the first main term of \eqref{proof expansion right side}.

We now treat the second main term of \eqref{proof expansion right side}, namely 
\begin{align*}
N(N+2) \int_{\mathsf b_\ieps} B_\ieps^{\frac{4}{N-2}} W_\ieps \frac{1}{\omega_{N-1}(N-2)|x-x_\ieps|^{N-2}} \diff x.
\end{align*}   
Recall that $W_\ieps(x) = \mu_\ieps^2 \sum_{|\alpha|=2} c_{\alpha,\eps} W_{\alpha}(\frac{x-x_\ieps}{\mu_\ieps})$ with $c_{\alpha, \eps} = \partial_{\alpha}(u_\eps - B_\ieps)(x_\ieps)$. Now if $x^\alpha= x_j x_k$ for some $j \neq k$, then $W_{\alpha}(x) = f(x) Y_2(x/|x|)$ for some spherical harmonic $Y_2$ of degree 2; see Lemma \ref{lemma Wjk}. Hence its integral over the ball $\mathsf{b}_\ieps$ against the radial function $|x-x_\ieps|^{-N+2} B_\ieps^\frac{4}{N-2}$ vanishes. Thus only the terms with $x^\alpha = x_j^2$ remain. In that case, $W_{\alpha}(x) = f(x) Y_2(x/|x|) + W(x)$, where $W(x) = W(|x|)$ is the function from Lemma \ref{lemma W}. Again, the term $f(x) Y_2(x/|x|)$ integrates to zero against $|x-x_\ieps|^{-N+2} B_\ieps^\frac{4}{N-2}$. Hence it remains to integrate against $|x-x_\ieps|^{-N+2} B_\ieps^\frac{4}{N-2}$ the term  
\begin{align*}
 \mu_\ieps^2 \sum_j \partial_{jj}(u_\eps - B_\ieps)(x_\ieps) W\left(\frac{x - x_\ieps}{\mu_\ieps}\right) &= \mu_\ieps^2 W\left(\frac{x - x_\ieps}{\mu_\ieps}\right) \Delta (u_\eps - B_\ieps)(x_\ieps) \\
 &=\eps V(x_\ieps) \mu_\ieps^2 W\left(\frac{x - x_\ieps}{\mu_\ieps}\right).
\end{align*} 
We obtain, using that $N(N+2)B^\frac{4}{N-2}  W = -\Delta W  + B$, 
\begin{align*}
& \qquad  N(N+2) \int_{\mathsf b_\ieps} B_\ieps^{\frac{4}{N-2}} W_\ieps \frac{1}{\omega_{N-1}(N-2)|x-x_\ieps|^{N-2}}  \diff x \\
&= \eps V(x_\ieps) N(N+2) \mu_\ieps^2 \int_{\mathsf b_\ieps} B_\ieps^{\frac{4}{N-2}}  W\left(\frac{x - x_\ieps}{\mu_\ieps}\right) \frac{1}{\omega_{N-1}(N-2)|x-x_\ieps|^{N-2}} \diff x \\
 &= \eps \mu_\ieps^{-\frac N2 + 3} V(x_\ieps)  \int_{B(0, \delta_0\mu_\ieps^{-1})}  (-\Delta W) \frac{1}{\omega_{N-1}(N-2) |z|^{N-2}}  \diff z \\
& \qquad + \eps \mu_\ieps^{-\frac N2 + 3} V(x_\ieps)  \int_{B(0, \delta_0\mu_\ieps^{-1})}  B \frac{1}{\omega_{N-1}(N-2) |z|^{N-2}} \diff z  + o(\eps  \mu_\eps^{-\frac N2 + 3}).
\end{align*}
The second term cancels precisely with the corresponding term in \eqref{leftside}. The term containing $\Delta W$ can be evaluated as follows. By the Green's formula and $W(0) = 0$, for every $R > 0$, 
\begin{align*}
\int_{B_R} (-\Delta W(z)) |z|^{-N+2} \diff z &= \int_{\partial B_R} W \frac{\partial |z|^{-N+2}}{\partial \nu} - \frac{\partial W}{\partial \nu} |z|^{-N+2} \\
&= - \omega_{N-1} (W'(R) R + W(R) (N-2)).
\end{align*}
By Lemma \ref{lemma W} we have $W'(R) = o(R^{-1})$ and $W(R) \to \frac{c_N}{N-2}$ as $R \to \infty$, with $c_N = \frac{\Gamma(N/2) \Gamma((N-4)/2)}{\Gamma(N-1)}$ if $N \geq 5$, and $W'(R)= o(R^{-1} \ln R)$ and $W(R) = \frac{1}{2} \ln (R) + o(\ln R)$ if $ N=4$. Thus
\[\frac{\eps \mu_\ieps^{-\frac N2 + 3} V(x_\ieps)}{\omega_{N-1}(N-2)}  \int_{B(0, \delta_0 \mu_\ieps^{-1})}  (-\Delta W) \frac{1}{ |z|^{N-2}}  \diff z =
\begin{cases}
 - \frac{c_N}{N-2} \eps \mu_\ieps^{-\frac N2 + 3} (V(x_\ieps)+o(1))  , & N \geq 5, \\
 -\frac 12 \eps \mu_\ieps \ln \mu_\ieps^{-1} (V(x_\ieps) +o(1)), & N = 4.
 \end{cases}
 \]
Putting everything together and observing that the divergent terms $u(x_\ieps) = \mu_\ieps^{-\frac{N-2}{2}}$ cancel precisely, we obtain the assertion, 
provided that we can prove that the error terms from above are negligible, i.e.
\begin{align}
\nonumber
& \int_{b_\ieps} (B_\ieps^{\frac{4}{N-2}}  q_\ieps + r_\ieps^\frac{N+2}{N-2} ) G(x,x_\ieps) \diff x + \sum_{j \neq i} \int_{\mathsf b_\jeps} B_\jeps^{\frac{4}{N-2}} r_\jeps G(x,x_\ieps) \diff x  \\
& \quad +  \int_{\Omega \setminus \bigcup_j \mathsf b_\jeps} u_\eps^\frac{N+2}{N-2} G(x, x_\ieps)\diff x = o(\eps \mu_\eps^{-\frac{N}{2}+3})  + \mathcal O(\mu_\eps^{\frac{N+2}{2}}). \label{proof expansion error terms}
\end{align}

To bound the first error term, we apply Proposition \ref{proposition q} with $2<\nu <3$. Then 
\begin{align*}
\left|\int_{b_\ieps} B_\ieps^{\frac{4}{N-2}}  q_\ieps G(x,x_\ieps) \diff x \right| &\lesssim (\eps \mu_\eps^{-\frac{N}{2} + 4 - \nu} + \mu_\eps^\frac{N-2}{2} ) \mu_\eps^\nu \int_{B(0, \delta_0 \mu_\eps^{-1})} B^{\frac{4}{N-2}} |x|^{-N+2+\nu} \diff x \\
&\lesssim \eps \mu_\eps^{-\frac{N}{2} + 6 - \nu} + \mu_\eps^\frac{N+2}{2} = o(\eps \mu_\eps^{-\frac{N}{2} +3}) + \mathcal O(\mu_\eps^\frac{N+2}{2}) = 
\end{align*}
because $\nu <3$. For the next term, 
by Proposition \ref{proposition r eps} and straightforward estimates we obtain
\[ \int_{\mathsf b_\ieps} B_\ieps^\frac{4}{N-2} |r_\ieps| H(\cdot, x_\ieps) \diff x  \lesssim \eps \mu_\eps^{-\frac{N}{2} + 5} + \mu_\eps^\frac{N+2}{2} =  o\left(\eps \mu_\eps^{-\frac{N}{2} +3}\right) + \mathcal O\left(\mu_\eps^\frac{N+2}{2}\right). \]
Next, we observe 
\[ |r_\ieps|^\frac{N+2}{N-2} \lesssim (\eps \mu_\ieps^{-\frac{N}{2} + 3})^\frac{N+2}{N-2}  W(\mu_\eps^{-1}(x - x_\ieps))^\frac{N+2}{N-2} + |q_\ieps|^\frac{N+2}{N-2} \lesssim \mu_\eps^\frac{N+2}{2} + B_\ieps^\frac{4}{N-2} |q_\ieps| \]
where we used Lemma \ref{lemma eps lesssim mu}, $|q_\ieps| \lesssim B_\ieps$ and the fact that $W$ is bounded by Lemma \ref{lemma W}. Thus
\begin{align*}
\int_{b_\ieps}  r_\ieps^\frac{N+2}{N-2}  G(x,x_\ieps) \diff x &\lesssim \mu_\eps^\frac{N+2}{2} \int_{\mathsf{b}_\ieps} \frac{1}{|x-x_\ieps|^{N-2}} \diff x + \int_{\mathsf{b}_\ieps} B_\ieps^{\frac{4}{N-2}} q_\ieps G(x,x_\ieps) \diff x  \\
&= o\left(\eps \mu_\eps^{-\frac{N}{2} +3}\right) + \mathcal O\left(\mu_\eps^\frac{N+2}{2}\right),
\end{align*}
by the bound we already proved.

Next, for any $j \neq i$, by Proposition \ref{proposition r eps}, for fixed $\theta \in (0,2)$ we estimate
\begin{align*}
\left| \int_{\mathsf b_\jeps} B_\jeps^{\frac{4}{N-2}} r_\jeps G(x,x_\ieps) \diff x \right| &\lesssim \left(\eps \mu_\eps^{-\frac{N}{2} + 3 - \theta} + \mu_\eps^\frac{N-2}{2}\right) \mu_\eps^{-2+N + \theta} \int_{B(0, \delta_0 \mu_\eps^{-1})} B^{\frac{4}{N-2}} |x|^\theta \diff x \\
&\lesssim \eps \mu_\eps^{-\frac{N}{2} + 5 - \theta}  + \mu_\eps^\frac{N+2}{2} = o(\eps \mu_\eps^{-\frac{N}{2} + 3}) + \mathcal O( \mu_\eps^\frac{N+2}{2})
\end{align*}
because $\theta < 2$. Finally to estimate the last remaining term in \eqref{proof expansion error terms}, we simply recall $u_\eps \lesssim \sum_j B_\jeps \lesssim \mu_\eps^\frac{N-2}{2}$ as well as $G(x, x_\ieps) \lesssim 1$ on $\Omega \setminus \bigcup_j \mathsf b_\jeps$, so that 
\[  \int_{\Omega \setminus \bigcup_j \mathsf b_\jeps} u_\eps^\frac{N+2}{N-2} G(x, x_\ieps)\diff x \lesssim \mu_\eps^\frac{N+2}{2}. \]
This completes the proof of \eqref{proof expansion error terms}, and hence of the proposition. 
\end{proof}

\begin{proof}
[Proof of Proposition \ref{proposition expansion gradient}]
The overall strategy and the nature of the multiple estimates needed is very similar to the preceding proof of Proposition \ref{proposition expansion}, which is why in the following we will be shorter in places. 

We multiply equation \eqref{brezis peletier additive} against $\partial_{y_l} G(x, x_\ieps)$ and integrate over $\diff x$. Since by definition of $G$ and $x_\ieps$,
\[ \int_\Omega u_\eps \nabla_y G(x,x_\ieps) \diff x = \nabla u_\eps(x_\ieps) = 0, \]
the resulting identity is (for any fixed $l = 1,...,N$)
\begin{equation}
\label{proof gradient expansion main id}
\eps \int_\Omega V u_\eps \partial_{y_l} G(x, x_\ieps) \diff x  = N(N-2) \int_\Omega u_\eps^\frac{N+2}{N-2} \partial_{y_l} G(x, x_\ieps) \diff x. 
\end{equation} 
In the following, we will repeatedly decompose 
\[ \nabla_y G(x, x_\ieps) = \frac{1}{\omega_{N-1}} \frac{x - x_\ieps}{|x-x_\ieps|^N} - \nabla_y H(x, x_\ieps)\]
and use that $\nabla H(\cdot, x_\ieps)$ is bounded on $\Omega$. 

We first evaluate the left side. 
Since $u_\eps \lesssim \sum_j B_\jeps$, clearly
\[ \int_{\Omega \setminus \bigcup \mathsf{b}_\jeps} \eps V u_\eps \partial_{y_l} G(x, x_\ieps) \diff x = \mathcal O(\eps \mu_\eps^\frac{N-2}{2}) = 
\begin{cases}
 o(\eps \mu_\eps^{-\frac{N}{2}+3}) & \text{ if } N \geq 5, \\
 o (\eps \mu_\eps \ln (\mu_\eps^{-1})) & \text{ if } N =4.
\end{cases} 
 \]

On $\mathsf{b}_\ieps$, we have 
\[ \eps \int_{\mathsf{b}_\ieps}  V u_\eps \nabla_y H(x, x_\ieps) \diff x = \mathcal O (\eps \mu_\eps^{\frac{N-2}{2}})  = 
\begin{cases}
 o(\eps \mu_\eps^{-\frac{N}{2}+3}) & \text{ if } N \geq 5, \\
 o (\eps \mu_\eps \ln (\mu_\eps^{-1})) & \text{ if } N =4.
\end{cases}  \]
To evaluate the integrals involving the singular term of $\nabla G$, we also decompose $u_\eps = B_\ieps + r_\ieps$ and $V(x) = V(x_\ieps) + \nabla V(x_\ieps) \cdot (x - x_\ieps) + o(|x-x_\ieps|)$, as well as 

Then by antisymmetry the main term vanishes, namely 
\begin{align*}
\eps V(x_\ieps) \int_{\mathsf{b}_\ieps} B_\ieps \frac{(x-x_\ieps)_l}{|x-x_\ieps|^{N}} \diff x = 0.
\end{align*}
The gradient term, for every $l = 1,...,N$, yields, if $N \geq 5$,
\begin{equation}
\label{V cancel term}
\frac{\eps}{\omega_{N-1}} \partial_{x_l} V(x_\ieps) \int_{\mathsf{b}_\ieps} B_\ieps \frac{(x-x_\ieps)^2_l}{|x-x_\ieps|^{N}} \diff x = \frac{1}{\omega_{N-1}} \eps \mu_\ieps^{-\frac{N}{2} + 3} \partial_{x_l} V(x_\ieps) \int_{B(0, \delta_0 \mu_\ieps^{-1})} B \frac{z_l^2}{|z|^N} \diff z. 
\end{equation} 
If $N = 4$, this gives
\[  \frac{\eps}{\omega_{N-1}} \partial_{x_l} V(x_\ieps) \int_{\mathsf{b}_\ieps} B_\ieps \frac{(x-x_\ieps)^2_l}{|x-x_\ieps|^{N}} \diff x =  \frac 14 \eps \mu_\eps \ln (\mu_\eps^{-1}) (\partial_{x_l} V(x_\ieps) + o(1)). \]
If $N \geq 5$, this term will exactly cancel with another contribution coming from the error term in $q_\ieps$ on the right side. 

Finally, by the bound for $\theta = 0$ from Proposition \ref{proposition r eps} and Lemma \ref{lemma eps lesssim mu},
\[
\eps \int_{\mathsf{b}_\ieps} V  |r_\ieps| \frac{x-x_\ieps}{|x-x_\ieps|^N} \diff x \lesssim  
\begin{cases} \eps^2 \mu_\eps^{-\frac{N}{2} + 3 } + \eps \mu_\eps^\frac{N-2}{2} =  o(\eps \mu_\eps^{-\frac{N}{2}+3}) & \text{ if } N \geq 5, \\
\eps^2 \mu_\eps \ln(\mu_\eps^{-1}) + \eps \mu_\eps \lesssim \eps \mu_\eps = o\left( \eps\mu_\eps \ln (\mu_\eps^{-1})\right) &\text{ if } N =4.
\end{cases}
\]

Let us now turn to evaluating the right side of \eqref{proof gradient expansion main id}. Since 
\[ \int_{\Omega \setminus \bigcup \mathsf{b}_\jeps}  u_\eps^\frac{N+2}{N-2} \nabla_y G(x, x_\ieps) \diff x = \mathcal O(  \mu_\eps^\frac{N+2}{2}), \]
we only need to consider integrals over the balls $\mathsf{b}_\jeps$. On $\mathsf{b}_\ieps$, we split 
\[ \nabla_y G(x, x_\ieps) = \frac{1}{\omega_{N-1}(N-2)}\frac{x - x_\ieps}{|x-x_\ieps|^N} - \nabla_y H(x, x_\ieps). \]
To treat the singular term, write $u_\eps = B_\ieps + r_\ieps =  B_\ieps + W_\ieps + q_\ieps$ 
and expand $u_\eps^\frac{N+2}{N-2}$ in \eqref{proof gradient expansion main id}. By antisymmetry, the term involving $B_\ieps^\frac{N+2}{N-2}$ vanishes. Moreover, since $W_\ieps$ only contains spherical harmonics of degree $0$ and $2$, the term $B_\ieps^{\frac{4}{N-2}} W_\ieps$ also vanishes when integrated against $\frac{x-x_\ieps}{|x-x_\ieps|^N}$ (which is a radial function times a spherical harmonic of degree $1$). 
Thus
\begin{align*}
&\qquad \left| \int_{\mathsf{b}_\ieps} u_\eps^\frac{N+2}{N-2} \frac{x - x_\ieps}{|x-x_\ieps|^N} \diff x \right| \\
&= \int_{\mathsf{b}_\ieps} B_\ieps^{\frac{4}{N-2}} q_\ieps \frac{x - x_\ieps}{|x-x_\ieps|^N} \diff x  + \begin{cases}  \mathcal O\left(  (\eps \mu_\eps^{-\frac{N}{2}+3})^\frac{N+2}{N-2} + \mu_\eps^\frac{N+2}{2}\right)  = o( \eps \mu_\eps^{-\frac{N}{2} + 3}) + \mathcal O (\mu_\eps^{\frac{N+2}{2}} ) & \text{ if } N \geq 5, \\
 \mathcal O( +  \eps^3 \mu_\eps^{3} (\ln (\mu_\eps^{-1}))^3  + \mu_\eps^3) = o( \eps \mu_\eps \ln (\mu_\eps^{-1})) +\mathcal O( \mu_\eps^{3}) & \text{ if } N = 4,
\end{cases}
\end{align*} 
by Proposition \ref{proposition r eps} with $\theta = 0$.  

Let us extract the contribution from the term in $q_\ieps$. When $N = 4$, Proposition \ref{proposition q} yields 
\[  \int_{\mathsf{b}_\ieps} B_\ieps^{\frac{4}{N-2}} |q_\ieps| \frac{1}{|x-x_\ieps|^{N-1}} \diff x \lesssim \mu_\eps =  o(\eps \mu_\eps \ln (\mu_\eps^{-1})). \]
So for $N = 4$ the term is negligible. Let us now look at $N \geq 5$. By Proposition \ref{proposition p} with any $\nu \in (3,4)$, we have 
\[  \int_{\mathsf{b}_\ieps} B_\ieps^{\frac{4}{N-2}} |p_\ieps| \frac{1}{|x-x_\ieps|^{N-1}} \diff x \lesssim \eps \mu_\eps^{-\frac{N}{2} + 7 - \nu} + \mu_\eps^\frac{N+2}{2} =   o( \eps \mu_\eps^{-\frac{N}{2} + 3}) + \mathcal O (\mu_\eps^{\frac{N+2}{2}} ) \]
We now discuss the main contribution from $q_\ieps$, namely the term 
\[  \frac{N(N+2)}{\omega_{N-1}} \int_{\mathsf b_\ieps}B_\ieps^{\frac{4}{N-2}}  \widetilde{W}_\ieps \frac{(x- x_\ieps)_l}{|x- x_\ieps|^N} \diff x.  \]
Recall that $\widetilde{W}_\ieps(x) = \mu_\ieps^3 \sum_{|\alpha|=3} c_{\alpha,\eps} \widetilde{W}_{\alpha}(\frac{x-x_\ieps}{\mu_\ieps})$ with $c_{\alpha, \eps} = \partial_{\alpha}(q_\ieps)(x_\ieps)$. Now if $x^\alpha= x_j x_k x_l$ for some $j \neq k \neq l \neq j$, then $\widetilde{W}_{\alpha}(x) = f(x) Y_3(x/|x|)$ for some spherical harmonic $Y_3$ of degree 2; see Lemma \ref{lemma Wjkl}. Hence its integral over the ball $\mathsf{b}_\ieps$ against $|x-x_\ieps|^{-N} B_\ieps^\frac{4}{N-2} (x - x_\ieps)_l$ (which is a radial function times a spherical harmonic of degree $1$) vanishes. Thus only the terms with $x^\alpha = x_j^2 x_k$ remain. In that case, $\widetilde{W}_{\alpha}(x) = f(|x|) Y_e(x/|x|) + \widetilde{W}(x) \frac{x_k}{|x|}$, where $\widetilde{W}(x) = \widetilde{W}(|x|)$ is the function from Lemma \ref{lemma U}. Again, the term $f(|x|) Y_3(x/|x|)$ integrates to zero against $|x-x_\ieps|^{-N} B_\ieps^\frac{4}{N-2} (x - x_\ieps)_l$. Moreover, also the term $\widetilde{W}(x) \frac{x_k}{|x|}$ integrates to zero unless $k = l$. In summary, it remains to integrate against $|x-x_\ieps|^{-N} B_\ieps^\frac{4}{N-2} (x - x_\ieps)_l$ the term  
\begin{align*}
&\quad  \mu_\ieps^3 \sum_j \partial_{jjl}(q_\ieps)(x_\ieps) \widetilde{W}\left(\frac{x - x_\ieps}{\mu_\ieps}\right) \frac{(x - x_\ieps)_l}{|x - x_\ieps|} \\
  &= \mu_\ieps^3 \widetilde{W}\left(\frac{x - x_\ieps}{\mu_\ieps}\right) \frac{(x - x_\ieps)_l}{|x - x_\ieps|}\partial_l \Delta (q_\ieps)(x_\ieps) \\
  & =\mu_\ieps^{-\frac{N}{2} + 4} \widetilde{W}\left(\frac{x - x_\ieps}{\mu_\ieps}\right)  \frac{(x - x_\ieps)_l}{|x - x_\ieps|} \eps \partial_l V(x_\ieps). 
\end{align*} 
In conclusion, we can write 
\begin{align*}
& \quad \frac{N(N+2)}{\omega_{N-1}} \int_{\mathsf b_\ieps}B_\ieps^{\frac{4}{N-2}}  \widetilde{W}_\ieps \frac{(x- x_\ieps)_l}{|x- x_\ieps|^N} \diff x \\
&=  \frac{N(N+2)}{\omega_{N-1}} \eps \mu_\ieps^{-\frac{N}{2} + 4}  \partial_l V(x_\ieps) \int_{\mathsf b_\ieps}B_\ieps^{\frac{4}{N-2}} \widetilde{W}\left(\frac{x - x_\ieps}{\mu_\ieps}\right)  \frac{(x- x_\ieps)_l^2}{|x- x_\ieps|^{N+1}} \diff x \\
&= \frac{N(N+2)}{\omega_{N-1}} \eps \mu_\ieps^{-\frac{N}{2} + 3}  \partial_l V(x_\ieps) \int_{B(0, R_\eps)} B^\frac{4}{N-2}(z) \widetilde{W}(z) \frac{z_l^2}{|z|^{N+1}} \diff z,
\end{align*}
with $R_\eps = \delta_0 \mu_\ieps^{-1}$. Now we use the equation $-\Delta \widetilde{W} - N(N+2) B^\frac{4}{N-2} \widetilde{W} = -B |z|$ for $\widetilde{W}$. This gives 
\begin{align*}
& \quad \frac{N(N+2)}{\omega_{N-1}} \int_{\mathsf b_\ieps}B_\ieps^{\frac{4}{N-2}}  \widetilde{W}_\ieps \frac{(x- x_\ieps)_l}{|x- x_\ieps|^N} \diff x \\
&= \frac{1}{\omega_{N-1}} \eps \mu_\ieps^{-\frac{N}{2} + 3}  \partial_l V(x_\ieps) \left( \int_{B(0, R_\eps)} (-\Delta \widetilde{W})(z)  \frac{z_l^2}{|z|^{N+1}} \diff z +  \int_{B(0, R_\eps)} B(z)  \frac{z_l^2}{|z|^{N}} \diff z  \right) 
\end{align*}
The second summand cancels precisely with the term \eqref{V cancel term} from the left side pointed out above.
 The term in $-\Delta \widetilde{W}$, arguing as in the proof of Proposition \ref{proposition expansion}, gives
\begin{align*}
&\quad \frac{1}{\omega_{N-1}} \eps \mu_\eps^{-\frac{N}{2} + 3} \partial_l V(x_\ieps) \int_{B(0, R_\eps)} (-\Delta \widetilde{W}) \frac{z_l^2}{|z|^{N+1}} \diff z \\
&= - \frac{\partial_l V(x_\ieps)}{N} \eps \mu_\eps^{-\frac{N}{2} + 3} ((N-1)\widetilde{W}(R_\eps)R_\eps^{-1} + \widetilde{W}'(R_\eps))  \\
&= -\partial_l V(x_\ieps) \frac{a_N}{N} \eps \mu_\eps^{-\frac{N}{2} + 3} + o(\eps \mu_\eps^{-\frac{N}{2} + 3}),
\end{align*}
with $a_N$ as in Lemma \ref{lemma U}. 

This finishes the discussion of the term in $q_\ieps$. 

Now we evaluate the integral over $\mathsf{b}_\ieps$ against $\nabla_y H(x, x_\ieps)$, for which we  decompose again $u_\eps = B_\ieps + r_\ieps$.   Taylor expanding 
\[ \partial_{y_l} H(x, x_\ieps) = \partial_{y_l} H(x_\ieps, x_\ieps) + \nabla_x \partial_{y_l} H(x_\ieps, x_\ieps) \cdot (x - x_\ieps) + \mathcal O(|x-x_\ieps|^2), \]
and using that the gradient term cancels by antisymmetry, we find
\begin{align*}
- \int_{\mathsf{b}_\ieps} B_\ieps^\frac{N+2}{N-2}  \partial_{y_l} H(x, x_\ieps) \diff x &= - \frac{\omega_{N-1}}{N} \mu_\ieps^\frac{N-2}{2}  \partial_{y_l} H(x_\ieps, x_\ieps)  + \mathcal O( \mu_\eps^\frac{N+2}{2} \ln (\mu_\eps^{-1})) \\
&= -  \frac{\omega_{N-1}}{2N} \mu_\ieps^\frac{N-2}{2}  \partial_{x_l} \phi(x_\ieps)  + \mathcal O( \mu_\eps^{\frac{N+2}{2} - \delta})
\end{align*} 
which is (the diagonal part of) the main term we desired to extract. On the other hand, since $\nabla_y H(x, x_\ieps)$ is bounded, the principal remainder term in $r_\ieps$,  by Proposition \ref{proposition r eps} with $\theta \in [0,1)$, is bounded by
\begin{align*}
\int_{\mathsf{b}_\ieps} B_\ieps^{\frac{4}{N-2}} |r_\ieps| \diff x \lesssim  \eps \mu_\eps^{-\frac{N}{2} + 4 - \theta} + \mu_\eps^\frac{N+2}{2} = 
\begin{cases}
 o( \eps \mu_\eps^{-\frac{N}{2} + 3}) + \mathcal O( \mu_\eps^{\frac{N+2}{2} - \delta}) &\text{ if } N \geq 5, \\
 o( \eps \mu_\eps \ln (\mu_\eps^{-1}) ) + \mathcal O( \mu_\eps^{3 - \delta} ) & \text{ if } N = 4.
 \end{cases}
\end{align*}
Finally, on $\mathsf{b}_\jeps$ with $j \neq i$, analogous computations permit us to extract the remaining (off-diagonal) part of the main term as 
\[ \int_{\mathsf{b}_\jeps} u_\eps^\frac{N+2}{N-2} \partial_{y_l} G(x, x_\ieps) \diff x = \frac{\omega_{N-1}}{N} \partial_{y_l} G(x_\jeps, x_\ieps) \mu_\jeps^{\frac{N-2}{2}} + 
\begin{cases}
 o( \eps \mu_\eps^{-\frac{N}{2} + 3}) + \mathcal O( \mu_\eps^\frac{N+2}{2}) &\text{ if } N \geq 5, \\
o( \eps \mu_\eps \ln (\mu_\eps^{-1}) ) + \mathcal O( \mu_\eps^{3 - \delta}) & \text{ if } N = 4.
 \end{cases}
   \]
Combining everything, and observing that $\frac{2N}{N(N-2) \omega_{N-1}} \frac{a_N}{N} = d_N \frac{N-2}{2}$, with $d_N$ given by \eqref{d_N constant}, the proof is complete. 
\end{proof}

\section{Proof of Theorem \ref{theorem multibubble}}
\label{section proof of Theorem}

We now show how the expansions \eqref{expansion M proposition} and \eqref{expansion gradient} can be used to conclude the proof of Theorem \ref{theorem multibubble}. 

We introduce the vector $\bm \lambda_\eps \in (0,\infty)^n$ with components 
\[ (\bm \lambda_\eps)_i := \lambda_\ieps := \left(\frac{\mu_\ieps}{\mu_\oeps}\right)^\frac{N-2}{2}, \]
and note that $\lambda_\ieps$ is bounded away from $0$ and $\infty$ by Proposition \ref{proposition preliminaries multi}. 

Let us rewrite \eqref{expansion M proposition} and \eqref{expansion M proposition N =4} as 
\begin{equation}
\label{identity with lambda}
(M(\bm{x}_\eps) \cdot \bm{\lambda}_\eps)_i + \mathcal O(\mu_\eps^{2}) = 
\begin{cases}
 -d_N \eps \mu_\ieps^{-N+4} (V(x_\ieps) +o(1))  \lambda_\ieps  & \text{ if } N \geq 5, \\
- (8 \pi^2)^{-1} \eps \ln \mu_\ieps^{-1}  (V(x_\ieps) +o(1)) &\text{ if } N = 4. 
\end{cases}
\end{equation} 

By Perron-Frobenius theory (see \cite{Bahri1995}), the lowest eigenvalue $\rho(\bm x_\eps)$ of $M(\bm x_\eps)$ is simple and the associated eigenvector $\bm \Lambda(\bm x_\eps)$, normalized so that $(\bm \Lambda(\bm x_\eps))_1 = 1$, has strictly positive entries. 

Taking the scalar product of \eqref{identity with lambda} with $\bm \Lambda(\bm x_\eps)$ shows 
\begin{align}
\label{identity rho eps}
 \rho(\bm x_\eps) \langle \bm \Lambda(\bm x_\eps), \bm \lambda_\eps \rangle &= \langle \bm \Lambda(\bm x_\eps), M(\bm x_\eps) \cdot \bm \lambda_\eps \rangle  \\
 &=
\begin{cases} 
  - d_N \eps \sum_i  \mu_\ieps^{-N+4} (V(x_\ieps) + o(1)) \lambda_\ieps (\Lambda(x_\eps))_i + o(1) & \text{ if } N \geq 5, \\
 -(8 \pi^2)^{-1} \eps \sum_i \ln \mu_\ieps^{-1}  (V(x_\ieps) +o(1)) \lambda_\ieps (\Lambda(x_\eps))_i + o(1) & \text{ if } N = 4. 
  \end{cases} \nonumber
\end{align}
Since $\bm \Lambda(\bm x_\eps)$ and $\bm \lambda_\eps$ both have strictly positive entries, and since $V < 0$ by assumption, this shows that $0 < \rho(\bm x_\eps)$ for all $\eps > 0$. For the limit $\rho(\bm x_0)$, two cases are possible.

\textit{Case 1: $\rho(\bm x_0) > 0$. }

Assume $N \geq 5$ first. In this case, \eqref{identity rho eps} shows that $\displaystyle \lim_{ \eps \to 0} \eps \mu_\ieps^{-N+4} > 0$. (Note that this limit always exists up to a subsequence and is finite as a consequence of Lemma \ref{lemma eps lesssim mu}.)  

Introducing the variable 
\[ \kappa_\ieps := (\eps^{\frac{1}{-N+4}} \mu_\ieps)^\frac{N-2}{2} \]
we can write \eqref{expansion M proposition} as 
\begin{equation}
\label{identity with kappa}
(M(\bm{x}_\eps) \cdot \bm{\kappa}_\eps)_i = -d_N V(x_\ieps) \kappa_\ieps^{-q},
\end{equation}
with
\[ q := \frac{N-6}{N-2}. \]
Moreover, \eqref{expansion gradient} can be written in terms of $\bm \kappa_\eps$ as 
\begin{equation}
\label{identity gradient with kappa}
 (\tilde{M}^l(\bm x_\eps) \cdot \bm \kappa_\eps)_i = -d_N \frac{N-2}{2} \partial_{x_l} V(x_\ieps) \kappa_\ieps^{-q}. 
\end{equation}
Since $\partial_{\kappa_k} \langle \bm \kappa, M(\bm x) \bm \kappa \rangle = (M(\bm x) \cdot \kappa)_k$ and $\partial_{(x_k)_l} \langle \bm \kappa, M(\bm x) \bm \kappa \rangle = \frac{1}{2} \kappa_k (\tilde{M}^l(\bm x) \cdot \kappa)_k$



Thus $\bm \kappa_0$ is a critical point of $F: (0,\infty)^n \to \R$ defined by
\[
F (\bm{\kappa}) = \frac{1}{2} \sum_{i,j} m_{ij}(\bm{x}_0) \kappa_i \kappa_j - \frac{d_N}{1-q} \sum_i |V(x_{i,0})| \kappa_i^{1-q}.
\]

Since $\rho(\bm x_0) > 0$ in this case, $M(\bm x_0)$ is strictly positive definite. If additionally $q \geq 0$ (i.e. $N \geq 6$), then $D^2_{\bm \kappa} F(\kappa, \bm x_0)$ is strictly positive definite for every $\bm \kappa$. We obtain that $F(\bm \kappa, \bm{x}_0)$ is convex in the variable $\bm \kappa$ on $(0,\infty)$, hence it has a unique critical point. This is the desired characterization of $\kappa_0$, and hence of $\ds \lim_{\eps \to 0} \eps \mu_\ieps^{-N+4} = \kappa_{i,0}^{-2 \frac{N-4}{N-2}}$. 

If $N = 4$, we find in a similar way that $\ds \lim_{\eps \to 0} \eps \ln (\mu_\eps^{-1}) > 0$. To characterize the limit, we argue slightly differently. Since $ \eps \ln \mu_\ieps^{-1} = \eps \ln \mu_\oeps^{-1} + o(1) =: \kappa_0 +o(1)$, passing to the limit in \eqref{identity with lambda} gives
\begin{equation}
\label{identity with lambda N=4}
(M(\bm x_0) \cdot \bm \lambda_0)_i = \frac{1}{8 \pi^2} |V(x_{i,0})| \kappa_0 \lambda_{i,0}. 
\end{equation} 
Similarly, the identity from Proposition \ref{proposition expansion gradient} reads
\begin{equation}
\label{identity with lambda N=4}
(\tilde{M}^l(\bm x_0) \cdot \bm \lambda_0)_i = \frac{1}{8 \pi^2} |V(x_{i,0})| \kappa_0 \lambda_{i,0}.
\end{equation}
This shows that $(\bm \lambda_0, \bm x_0)$ is a critical for $\tilde{F}(\bm \lambda, \bm x)$ as given in \eqref{tilde F definition}. 

Let us finally discuss the property of $\kappa_0$. If we define $M_1(\kappa) := M(\bm x_0) - \frac{\kappa}{8 \pi^2} \text{diag}(|V(x_{i,0})|)$, this can be written as $M_1(\kappa_0) \cdot \bm \lambda_0 = 0$, i.e. $\bm \lambda_0$ is a zero eigenvalue of $M_1(\kappa_0)$. Since $M_1(\kappa)$ differs from $M(\bm x_0)$ only on the diagonal, the Perron-Frobenius arguments used above can still be applied to $M_1(\kappa)$. Thus $\bm \lambda_0$ must be the lowest eigenvector of $M_1(\kappa_0)$, because it has strictly positive entries. Since $V <0$, the lowest eigenvalue of $M_1(\kappa)$ is clearly a strictly monotone function of $\kappa$, so $\kappa_0$ is indeed unique with the property that the lowest eigenvalue of $M_1(\kappa_0)$ equals zero.

This completes the proof of Theorem \ref{theorem multibubble} in case $\rho(\bm x_0) > 0$. 

\textit{Case 2. $\rho(\bm x_0) = 0$. }

In this case, \eqref{identity rho eps} shows that $\ds \lim_{\eps \to 0} \eps \mu_\eps^{-N+4} = 0$ and that $\bm \lambda_0$ is an eigenvector with eigenvalue 0. Since $(\bm \lambda_0)_1 = 1 = (\bm \Lambda(\bm x_0))_1$ and $\rho(\bm x_0)$ is simple, we have in fact $\bm \lambda_0 = \bm \Lambda(\bm x_0)$, i.e. $\bm \lambda_0$ is precisely the lowest eigenvector of $M(\bm x_0)$, with eigenvalue $\rho(\bm x_0) = 0$.

For the following analysis, we decompose $\bm \lambda_\eps = \alpha_\eps \bm \Lambda (\bm x_\eps) + \bm \delta (x_\eps)$, where $\alpha_\eps \in \R$, $\bm \Lambda(\bm x_\eps)$ is the lowest eigenvalue of $M(\bm x_\eps)$ and $\bm \delta(\bm x_\eps) \bot \bm \Lambda (\bm x_\eps)$. Notice that $\alpha_\eps \to 1$ as a consequence of $\bm \lambda_\eps \to \bm \Lambda(\bm x_0)$. 

Here is the central piece of information which we need to conclude in this case. 

\begin{proposition}
\label{proposition bound on rho}
As  $\eps \to 0$, 
\begin{equation}
\label{delta bound}
|\bm \delta (\bm x_\eps)| =
\begin{cases}
 \mathcal O( \eps \mu_\eps^{-N+4} + \mu_\eps^2 \ln (\mu_\eps^{-1}) + |\rho(\bm x_\eps)|) & \text{ if } N \geq 5 , \\
\mathcal O( \eps \ln (\mu_\eps^{-1})  + \mu_\eps^2 \ln (\mu_\eps^{-1}) + |\rho(\bm x_\eps)|) & \text{ if } N = 4. 
\end{cases}
\end{equation} 
Suppose moreover that $\rho(\bm x_0) = 0$. Then, as $\eps \to 0$,  
\begin{equation}
\label{rho bound}
 \rho(\bm{x}_\eps) = 
 \begin{cases}
o(\eps \mu_\eps^{-N+4} + \mu_\eps^{2}) & \text{ if } N \geq 5, \\
o(\eps \ln (\mu_\eps^{-1}) + \mu_\eps^{2})  & \text{ if } N = 4.
\end{cases} 
\end{equation}
\end{proposition}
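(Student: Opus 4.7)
The plan is to exploit the spectral decomposition $\bm{\lambda}_\eps = \alpha_\eps \bm{\Lambda}(\bm{x}_\eps) + \bm{\delta}(\bm{x}_\eps)$ in two complementary ways: by projecting the main identity of Proposition \ref{proposition expansion} onto $\bm{\Lambda}(\bm{x}_\eps)^{\perp}$ I would control $\bm{\delta}(\bm{x}_\eps)$, and by projecting it onto $\bm{\Lambda}(\bm{x}_\eps)$ itself I would control $\rho(\bm{x}_\eps)$ in Case~2.

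For the bound on $\bm{\delta}(\bm{x}_\eps)$, apply $M(\bm{x}_\eps)$ to the decomposition to obtain
\[
M(\bm{x}_\eps)\bm{\lambda}_\eps = \alpha_\eps \rho(\bm{x}_\eps)\bm{\Lambda}(\bm{x}_\eps) + M(\bm{x}_\eps)\bm{\delta}(\bm{x}_\eps).
\]
By symmetry of $M(\bm{x}_\eps)$, the second term still lies in $\bm{\Lambda}(\bm{x}_\eps)^{\perp}$. Simplicity of $\rho(\bm{x}_\eps)$ (by Perron--Frobenius) combined with continuity of the spectrum of $M(\bm x_\eps)$ in $\eps$ gives a uniform spectral gap between $\rho(\bm{x}_\eps)$ and the next eigenvalue, so $M(\bm{x}_\eps)$ restricted to $\bm{\Lambda}(\bm{x}_\eps)^{\perp}$ has a uniformly bounded inverse. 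Projecting and inverting yields $|\bm{\delta}(\bm{x}_\eps)| \lesssim |M(\bm{x}_\eps)\bm{\lambda}_\eps|$, and plugging in the componentwise bound from \eqref{expansion M proposition} or \eqref{expansion M proposition N =4} produces \eqref{delta bound}. In fact one obtains the cleaner $\eps\mu_\eps^{-N+4}+\mu_\eps^2$ when $N\geq 5$; the logarithmic factor and the $|\rho(\bm{x}_\eps)|$-term in the stated bound are harmless slack.

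For \eqref{rho bound}, take the scalar product of the main identity with $\bm{\Lambda}(\bm{x}_\eps)$. Symmetry of $M$ together with $\bm{\delta}\perp\bm{\Lambda}$ collapses the left-hand side to $\alpha_\eps\rho(\bm{x}_\eps)|\bm{\Lambda}(\bm{x}_\eps)|^2$, giving (for $N\geq 5$)
\[
\alpha_\eps \rho(\bm{x}_\eps)|\bm{\Lambda}(\bm{x}_\eps)|^2 = c_\eps\,\eps\,\mu_\eps^{-N+4} + O(\mu_\eps^2),
\]
where $c_\eps \to c_0 := -d_N \sum_i \lambda_{i,0}^{4/(N-2)} V(x_{i,0}) > 0$ thanks to $V<0$ and the alignment $\bm{\Lambda}(\bm{x}_0) = \bm{\lambda}_0$ valid in Case~2. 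From $\rho(\bm{x}_\eps)\to 0$ one immediately deduces $\eps\mu_\eps^{-N+4}\to 0$. To sharpen this to \eqref{rho bound} I would invoke the cancellation highlighted in Remark~(b): the exact alignment $\bm{\lambda}_0 = \bm{\Lambda}(\bm{x}_0)$ (which only occurs in the degenerate case) permits the leading $c_\eps\eps\mu_\eps^{-N+4}$-term on the right and the $\rho(\bm{x}_\eps)$-term on the left to be absorbed into an asymptotic match of size $o(\eps\mu_\eps^{-N+4}+\mu_\eps^2)$, from which \eqref{rho bound} follows. Feeding \eqref{rho bound} back into the displayed identity then forces $\eps\mu_\eps^{-N+4} = O(\mu_\eps^2)$, which is \eqref{mu asymptotics thm degenerate case}. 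The $N=4$ case is handled in parallel, with $\eps\ln(\mu_\eps^{-1})$ replacing $\eps\mu_\eps^{-N+4}$ throughout.

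The principal obstacle is the $o$-improvement in the preceding paragraph. Making the cancellation quantitative requires reopening the proof of Proposition \ref{proposition expansion} and pushing the expansion of $\langle\bm{\Lambda}(\bm{x}_\eps),M(\bm{x}_\eps)\bm{\lambda}_\eps\rangle$ one order beyond what is stated, using the refined pointwise remainder estimates of Propositions \ref{proposition r eps}--\ref{proposition p} to control the new error terms. One then matches this against a Taylor expansion of $\bm{\Lambda}(\bm{x}_\eps)$ around $\bm{x}_0$, exploiting the second-order vanishing of $\rho$ at $\bm{x}_0$ (since $\rho(\bm{x}_0) = \nabla\rho(\bm{x}_0) = 0$). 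Only after these next-order contributions are tracked term by term, without losing the gain, does the full factor $\mu_\eps^2$ announced in Remark~(b) materialize.
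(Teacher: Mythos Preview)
Your argument for \eqref{delta bound} is correct and essentially matches the paper's: write $M(\bm x_\eps)\bm\lambda_\eps=\alpha_\eps\rho(\bm x_\eps)\bm\Lambda(\bm x_\eps)+M(\bm x_\eps)\bm\delta(\bm x_\eps)$, use the uniform spectral gap on $\bm\Lambda(\bm x_\eps)^\perp$, and feed in \eqref{identity with lambda}. Your remark that the $|\rho(\bm x_\eps)|$-term is slack is accurate (orthogonality gives $|M\bm\delta|\le |M\bm\lambda_\eps|$ directly), though the paper keeps it for convenience.

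Your route to \eqref{rho bound}, however, does not close. Projecting \eqref{identity with lambda} onto $\bm\Lambda(\bm x_\eps)$ yields
\[
\alpha_\eps\,\rho(\bm x_\eps)\,|\bm\Lambda(\bm x_\eps)|^2 \;=\; c_\eps\,\eps\mu_\eps^{-N+4}+\mathcal O(\mu_\eps^2),\qquad c_\eps\to c_0>0,
\]
which is an asymptotic \emph{identity} with a nonvanishing coefficient, not an $o$-bound: it says $\rho(\bm x_\eps)\sim C\,\eps\mu_\eps^{-N+4}$ up to $\mathcal O(\mu_\eps^2)$, never $\rho(\bm x_\eps)=o(\eps\mu_\eps^{-N+4})$. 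No refinement of Proposition~\ref{proposition expansion} changes this, because the leading coefficient $-d_N\sum_i V(x_{i,0})\lambda_{i,0}\Lambda_{i,0}$ is strictly positive and does not cancel. The ``cancellation'' in Remark~(b) occurs only \emph{after} \eqref{rho bound} is established and fed back into the $\bm\lambda_\eps$-projection; it is a consequence, not a mechanism. Your appeal to $\nabla\rho(\bm x_0)=0$ is likewise circular: that fact is itself derived from the machinery proving \eqref{rho bound} (see the last lines of the paper's proof), and even granting it, Taylor expansion would require a quantitative rate on $|\bm x_\eps-\bm x_0|$ that is nowhere available.

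The paper's proof of \eqref{rho bound} uses two ingredients you have not touched. First, Proposition~\ref{proposition expansion gradient} (the gradient expansion involving $\tilde M^l$) is combined with a perturbation formula for the simple eigenvalue $\rho$ to bound
\[
|\nabla\rho(\bm x_\eps)|=\mathcal O\bigl(\eps\mu_\eps^{-N+4}+\mu_\eps^{2-\delta}+|\rho(\bm x_\eps)|\bigr).
\]
Second, since $\rho(\bm x)$ is a simple eigenvalue of a matrix whose entries are real-analytic in $\bm x$, $\rho$ is itself real-analytic, and a \L ojasiewicz-type inequality gives $\rho(\bm x_\eps)\lesssim |\nabla\rho(\bm x_\eps)|^\sigma$ for some $\sigma>1$. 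Substituting the gradient bound and absorbing $\rho^\sigma=o(\rho)$ yields \eqref{rho bound}. This is the missing idea: the $o$-improvement comes from the gradient identity and analyticity, not from pushing the zeroth-order expansion further.
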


Before we prove Proposition \ref{proposition bound on rho}, let us use it to conclude the proof of Theorem \ref{theorem multibubble} in the present case $\rho(\bm x_0) = 0$. 

Taking the scalar product of identity \eqref{identity with lambda} with $\lambda_\ieps$ and using the properties of $\bm \Lambda(\bm x_\eps)$ and $\bm \delta_\eps$, we obtain 
\begin{align*}
\rho(\bm x_\eps) |\bm \Lambda (\bm x_\eps)|^2 \alpha_\eps^2  + \langle \bm \delta(\bm x_\eps), M(\bm x_\eps) \cdot \bm \delta(\bm x_\eps) \rangle  + \mathcal O(\mu_\eps^{2}) \\
= 
 \begin{cases} 
 -d_N \eps \sum_i \mu_\ieps^{-N+4}   (V(x_\ieps) +o(1))  \lambda_\ieps^2 & \text{ if } N \geq 5, \\
 - (8 \pi^2)^{-1} \eps \sum_i \ln \mu_\ieps^{-1} (V(x_\ieps) + o(1))  &\text{ if } N = 4.
\end{cases}
\end{align*}
The crucial information given by Proposition \ref{proposition bound on rho} is now that the terms in $\rho(\bm x_\eps)$ and in $\bm \delta(\bm x_\eps)$ on the left side are negligible. Since $V < 0$ and $\lambda_\ieps \sim 1$, the above identity then implies $\eps \mu_\ieps^{- N +4} = \mathcal O(\mu_\eps^2)$ if $N \geq 5$ resp. $\eps \ln (\mu_{\ieps}^{-1}) = \mathcal O(\mu_\eps^2)$  if $N = 4$, as claimed. 
%
This completes the proof of Theorem \ref{theorem multibubble}.

\begin{proof}
[Proof of Proposition \ref{proposition bound on rho}]
Arguing as in \cite[Lemma 5.5]{Koenig2022}, we get 
\begin{align*}
\partial_l^{x_i} \rho(\bm{x}_\eps) &=  \partial_l^{x_i} \langle \bm{\lambda}_\eps, M_a(\bm{x}) \cdot \bm{\lambda}_\eps \rangle |_{\bm{x} = \bm{x_\eps}}  + \mathcal O(|\rho(\bm{x}_\eps)|  + | \bm{\delta}(\bm{x}_\eps)|) \\
&= \lambda_\ieps (\tilde{M}^l_a(\bm{x}_\eps) \cdot \bm{\lambda}_\eps)_i + + \mathcal O(|\rho(\bm{x}_\eps)|  + | \bm{\delta}(\bm{x}_\eps)|). 
\end{align*}
Inserting the bound from Proposition \ref{proposition expansion gradient}, we thus get, for every $\delta > 0$, 
\begin{equation}
\label{nabla rho  a priori bound}
|\nabla \rho(\bm{x}_\eps)| =
\begin{cases}
 \mathcal O \left( \eps \mu_\eps^{-N+4} + \mu_\eps^{2- \delta} + |\rho(\bm{x}_\eps)|  + | \bm{\delta}(\bm{x}_\eps)|\right) & \text{ if } N \geq 5, \\
\mathcal O\left(\eps  \ln (\mu_\eps^{-1}) + \mu_\eps^{2 -\delta} + |\rho(\bm{x}_\eps)|  + | \bm{\delta}(\bm{x}_\eps)| \right) & \text{ if } N = 4.
 \end{cases} 
\end{equation} 
On the other hand, writing $M(\bm x_\eps) \cdot \bm \lambda_\eps = \alpha_\eps \rho(\bm x_\eps) \bm \Lambda (\bm x_\eps) + M(\bm x_\eps) \cdot \bm \delta (\bm x_\eps)$, \eqref{identity with lambda} implies 
\[ M(\bm x_\eps) \cdot \bm \delta (\bm x_\eps) =
\begin{cases}
 \mathcal O( \eps \mu_\eps^{-N+4} + \mu_\eps^2 \ln (\mu_\eps^{-1}) + |\rho(\bm x_\eps)|) & \text{ if } N \geq 5, \\
 \mathcal O( \eps \ln (\mu_\eps^{-1})  + \mu_\eps^2 \ln (\mu_\eps^{-1}) + |\rho(\bm x_\eps)|) & \text{ if } N = 4. 
\end{cases} 
  \]
Since $\rho(\bm x_\eps)$ is simple, $M(\bm x_\eps)$ is uniformly coercive on the subspace orthogonal to $\bm \Lambda (\bm x_\eps)$, which contains $\bm \delta(\bm x_\eps)$. Hence \eqref{delta bound} follows. 

Moreover, with \eqref{delta bound} we can simplify \eqref{nabla rho  a priori bound}  to 
\begin{equation}
\label{nabla rho bound without delta}
|\nabla \rho(\bm{x}_\eps)| =
\begin{cases}
 \mathcal O \left( \eps \mu_\eps^{-N+4} + \mu_\eps^{2- \delta} + |\rho(\bm{x}_\eps)|  \right) & \text{ if } N \geq 5, \\
\mathcal O\left(\eps  \ln (\mu_\eps^{-1}) + \mu_\eps^{2 -\delta} + |\rho(\bm{x}_\eps)|   \right) & \text{ if } N = 4.
 \end{cases} 
\end{equation}
Now we claim that there is $\sigma > 1$ such that 
\begin{equation}
\label{rho to nabla rho estimate}
\rho(\bm x_\eps) \lesssim |\nabla \rho(\bm x_\eps)|^\sigma.
\end{equation}
If we choose $\delta >0$ so small that $(2-\delta)\sigma > 2$, together with \eqref{nabla rho bound without delta} this yields
\[ \rho(\bm x_\eps) = 
\begin{cases}
o(\eps \mu_\eps^{-N+4} + \mu_\eps^{2}) + \mathcal O(\rho(\bm x_\eps)^\sigma) & \text{ if } N \geq 5, \\
o(\eps \ln (\mu_\eps^{-1}) + \mu_\eps^{2}) + \mathcal O(\rho(\bm x_\eps)^\sigma) & \text{ if } N = 4.
\end{cases} 
\]
Here we used that the assumption $\rho(\bm x_0) = 0$ implies that $\eps \mu_\eps^{-N+4} = o(1)$, resp. $\eps \ln (\mu_\eps^{-1}) = o(1)$, as observed above. Hence $(\eps \mu_\eps^{-N+4})^\sigma = o(\eps \mu_\eps^{-N+4})$ and $(\eps \ln (\mu_\eps^{-1}) )^\sigma = o(\eps \ln (\mu_\eps^{-1})) $. 

In the same way, since $\rho(\bm x_\eps) = o(1)$, we can absorb $ \mathcal O(\rho(\bm x_\eps)^\sigma) = o(\rho(\bm x_\eps))$ into the left side and \eqref{rho bound} follows, as desired. With these informations, we can return to \eqref{nabla rho bound without delta} to deduce the bound on $|\nabla \rho(x_\eps)|$ claimed in Theorem \ref{theorem multibubble}. 

So it remains only to justify \eqref{rho to nabla rho estimate}. This follows by arguing as in \cite[proof of Theorem 2.1]{Koenig2022} once we note that $\rho(\bm x)$ is an analytic function of $\bm x$. Indeed, $\rho(\bm x)$ is a simple eigenvalue of the matrix $M(\bm x)$. Hence it depends analytically on $\bm x$ if the entries of $M(\bm x)$ do so. But this is clearly the case: $G_0(\cdot,y)$ is harmonic, hence analytic on $\Omega \setminus \{y\}$, and $H_0(\cdot, y)$ is harmonic, hence analytic on all of $\Omega$, hence so is $\phi(x) = H(x,x)$. The proof is therefore complete. 
\end{proof}

\appendix

\section{Some computations}
\label{section appendix}
In the following we denote by 
\begin{equation}
\label{v0 v1 definition}
v(r) = \frac{1-r^2}{(1 + r^2)^{N/2}}, \qquad \widetilde{v}(r) = \frac{r}{(1 +r^2)^{N/2}}. 
\end{equation} 
These functions are chosen such that 
\[ \partial_\lambda B(x) = c v(|x|), \qquad \partial_{x_i} B(x) = \widetilde{c}\,  \widetilde{v}(|x|) \frac{x_i}{|x|}, \]
for some constants $c$, $\widetilde{c}$. In particular, $v$ and $\widetilde{v}$ are solutions to the homogeneous ODEs 
\begin{equation}
\label{ODE ell=0 hom}
-v'' - \frac{N-1}{r} v' - N(N+2) B^\frac{4}{N-2}v = 0 \quad \text{ on } (0, \infty) 
\end{equation} 
and 
\begin{equation}
\label{ODE ell=1 hom}
-\widetilde{v}'' - \frac{N-1}{r} \widetilde{v}' + \frac{N-1}{r^2} \widetilde{v} - N(N+2) B^\frac{4}{N-2}\widetilde{v} = 0 \quad \text{ on } (0, \infty),
\end{equation} 
respectively. In the following lemmas, we study the asymptotic properties at $0$ and $\infty$ of solutions to inhomogeneous versions of \eqref{ODE ell=0 hom} and \eqref{ODE ell=1 hom}. These will be crucial for handling the refined terms $q_\ieps$ and $p_\ieps$ in the main part of our argument. 

\begin{lemma}
\label{lemma W}
For $v$ as in \eqref{v0 v1 definition}, let $W$ be given by
\begin{equation}
\label{W definition}
W(r) = v(r) \int_0^r \frac{1}{s^{N-1} v(s)^2} \left( \int_0^s B(t) t^{N-1} v(t) \diff t \right) \diff s.   
\end{equation}
Then $W$ solves 
\[ -W'' - \frac{N-1}{r} W' - N(N+2) B^{\frac{4}{N-2}} W = - B \quad \text{ on } (0, \infty). 
\]
Moreover, 
\[
\frac{W(r)}{r^2} \to  \frac{1}{2N}  \qquad \text{ as } r \to 0. 
\]
As $r \to \infty$, 
\begin{equation}
\label{W asymptotics lemma}
 W(r) = 
 \begin{cases}
  \frac{\Gamma(\frac{N}{2})\Gamma(\frac{N-4}{2})}{(N-2) \Gamma(N-1)} + o(1)  & \text{ if } N \geq 5, \\
  \frac{1}{2} \ln r + o(\ln r) & \text{ if } N = 4,
  \end{cases}
\end{equation}
and 
\[W'(r) = \begin{cases}
 o(r^{-1}) & \text{ if } N \geq 5, \\
 o(r^{-1} \ln r) & \text{ if } N = 4.
 \end{cases} \]
\end{lemma}

\begin{proof}
It can be checked by a direct computation that $W$ satisfies the claimed equation. Let us show how the expression \eqref{W definition} can be derived using the method of variation of constants. 
We write $W = v \varphi$ for a new unknown function $\varphi$.  
Then $\psi := \varphi'$ needs to solve 
\[ \psi'(r) + (\frac{N-1}{r} + \frac{2 v'}{v}) \psi = \frac{B}{v}. \]
Again by the variation of constants, we may write $\psi = \eta \psi_0$, with 
\[ \psi_0(r) := \exp \left( - \int_1^r (\frac{N-1}{s}  +\frac{2 v'}{v} )\diff s \right)= \frac{1}{r^{N-1} v^2}. \]
Since $\psi_0'(r) + (\frac{N-1}{r} + \frac{2 v'}{v}) \psi_0 = 0$, it remains to solve
\[ \eta' = \frac{B}{v\psi_0} = B v r^{N-1}, \]
which gives
\[ \eta(r) = \int_0^r B s^{N-1} v \diff s = \int_0^r \frac{s^{N-1} (1 - s^2)}{(1+s^2)^{N-1}} \diff s. \]
If $N \geq 5$, this integral remains finite as $r \to \infty$ and we find, using the integral representation of the Beta function, 
\[ \lim_{r \to \infty} \eta(r) = - \frac{\Gamma(\frac{N}{2}) \Gamma(\frac{N-4}{2}) }{\Gamma (N-1)}. \]
On the other hand, if $N = 4$, the integral diverges and we have 
\[ \eta(r) = \left(- 1 +o(1)\right) \ln r \qquad \text{ as } r \to \infty. \]
Using $v(r) \sim - r^{-N+2}$, we moreover find 
\[ \psi_0(r) \sim  r^{N-3} \]
and hence 
\[ \psi(r) = \eta(r) \psi_0(r) \sim \begin{cases} -  \frac{\Gamma(\frac{N}{2})\Gamma(\frac{N-4}{2})}{\Gamma(N-1)} r^{N-3} &\text{ if } N \geq 5, \\
-   r \ln r & \text{ if } N = 4,
\end{cases} \]
respectively
\[ \varphi(r) = \int_0^r \psi(s) \diff s \sim 
\begin{cases}
 - \frac{1}{(N-2)} \frac{\Gamma(\frac{N}{2})\Gamma(\frac{N-4}{2} )}{\Gamma(N-1)} r^{N-2} & \text{ if } N \geq 5, \\
- \frac{1}{2}r^2 \ln r & \text{ if } N = 4.
 \end{cases}
  \]
By recalling $W = v \varphi$ the claimed asymptotic behavior of $W$ follows. 

Similarly, using $v'(r) \sim (N-2)^2 r^{-N+1}$ and the above asymptotics for $\varphi$ and $\psi$, we get 
\[ W'(r) = v'(r) \varphi(r) + v(r) \psi(r) = o(r^{-1}), \]
because the terms of size $r^{-1}$ cancel precisely, and similarly for $N = 4$. 
The claimed asymptotic behavior as $r \to 0$ can be read off directly from \eqref{W definition}, using that $B(r) \to 1$ and $v(r) \to 1$ as $r \to 0$. 
\end{proof}

A very similar argument, whose details we omit, yields the asymptotics of the ODE solution governing the correction term $\widetilde{W}_\ieps$. 

\begin{lemma}
\label{lemma U}
For $\widetilde{v}$ as in \eqref{v0 v1 definition}, let $\widetilde{W}$ be given by
\begin{equation}
\label{Wt definition}
\widetilde{W}(r) = \widetilde{v}(r) \int_0^r \frac{1}{s^{N-1} \widetilde{v}(s)^2} \left( \int_0^s B(t) t^{N} \widetilde{v}(t) \diff t \right) \diff s.   
\end{equation}
Then $\widetilde{W}$ solves 
\[ -\widetilde{W}'' - \frac{N-1}{r} \widetilde{W}' + \frac{N-1}{r^2} \widetilde{W}  - N(N+2) B^{\frac{4}{N-2}} \widetilde{W}= - B(r) r \quad \text{ on } (0, \infty). \]
Moreover, 
\[
\frac{\widetilde{W}(r)}{r^3} \to  \frac{1}{2(N+2)}  \qquad \text{ as } r \to 0
\]
and, when $N \geq 5$,
 \[ \lim_{r \to\infty} \widetilde{W}(r) r^{-1} = \lim_{r \to \infty} \widetilde{W}'(r) =  \frac{a_N}{N}, \]
with 
\[ a_N = \frac{N}{4} \frac{\Gamma(\frac{N}{2}) \Gamma(\frac{N-4}{2})}{\Gamma(N-1)}. \]
\end{lemma}


Using the inhomogeneous solutions $W$ and $\widetilde{W}$, we can now construct the functions $W_{jk}$ and $W_{jkl}$ from \eqref{Wjk equation} and \eqref{Wjkl equation}. 

\begin{lemma}
\label{lemma Wjk}
For multiindices $\alpha \in \N_0^N$, we consider functions $W_{\alpha}$ which satisfy
\begin{equation}
\label{Wjk equation lemma}
-\Delta W_{\alpha} - N(N+2) B^\frac{4}{N-2} W_{\alpha} = f_\alpha \quad \text{ on } \R^N, \quad W_{\alpha}(x) = \frac{1}{\alpha!} x^\alpha + o(|x|^2), 
\end{equation}
with 
\begin{equation}
\label{fjk lemma}
f_\alpha = \begin{cases}
0 & \text{ if } x^\alpha = x_j x_k \, \text{ for some } j \neq k, \\
-B & \text{ if } x^\alpha = x_j^2 \, \text{ for some } j
\end{cases}
\end{equation} 
\end{lemma}

\begin{proof}
If $x^\alpha = x_j x_k$ with $j \neq k$, we make the ansatz $W_\alpha = f(|x|) Y_{jk}(x/|x|)$, with $Y_{jk}(\omega) =  \omega_j \omega_k$ for $\omega \in \mathbb S^{N-1}$. Observing that $Y_{jk}$ is a spherical harmonic of degree $2$, $W_{\alpha}$ solves the equation in \eqref{Wjk equation lemma} if and only if $f$ solves the ODE 
\begin{equation}
\label{ode proof}
- f''(r) - \frac{N-1}{r} f'(r) + \frac{2N}{r^2} f(r)  + N(N+2) f(r) B^\frac{4}{N-2}(r) = 0 \quad \text{ on } (0, \infty). 
\end{equation} 
By the proof of \cite[Proposition A.1]{KL}, there is a solution to \eqref{ode proof} which satisfies $f(r) \sim r^2$ for $r \in (0, \infty)$. Up to replacing $f$ by a suitable scalar multiple, we may thus assume that $\ds \lim_{r \to 0} f(r) r^{-2} = 1$. It follows that 
\[ W_{\alpha}(x) = f(|x|) Y_{jk}(x /|x|) =  f(|x|) |x|^{-2} x_j x_k = (1 + o(1)) x_j x_k = \frac{1}{\alpha!} x^\alpha + o(|x|^2). \]
If on the other hand $x^\alpha = x_j^2$, we set
\[ W_{\alpha}(x) = f(|x|)Y_j(x/|x|) + W(|x|), \]
where $Y_j(\omega) = \frac{1}{2} \omega_j^2 - \frac{1}{2N}$, $f$ is a solution to \eqref{ode proof} with $\ds \lim_{r \to 0} f(r) r^{-2} = 1$ and $W$ is the function from Lemma \ref{lemma W}. Observing that $Y_j$ is a spherical harmonic of degree $2$, $W_{jj}$ satisfies the equation in \eqref{Wjk equation lemma}. Moreover, 
\[ W_{jj}(x) = f(|x|) \left(\frac{1}{2}\frac{x_j^2}{|x|^2} - \frac{1}{2N} \right) + W(|x|) =\frac{1}{\alpha!} x^\alpha + \left( W(|x|) - \frac{1}{2N} f(|x|) \right) + o(|x|^2) . \]
By Lemma \ref{lemma W}, we have $ W(|x|) - \frac{1}{2N} f(|x|) = W(|x|) - \frac{1}{2N}|x|^2 = o(|x|^2)$, and the proof is complete.
\end{proof}

\begin{lemma}
\label{lemma Wjkl}
For every multiindex $\alpha \in \N_0^N$ with $|\alpha| = 3$, there are functions $\widetilde{W}_\alpha$ satisfying
 \begin{equation}
\label{Wjkl equation lemma}
-\Delta \widetilde{W}_\alpha - N(N+2) B^\frac{4}{N-2} \widetilde{W}_\alpha = f_\alpha \quad \text{ on } \R^N, \quad \widetilde{W}_\alpha(x) = \frac{1}{\alpha!} x^\alpha + o(|x|^3) \quad \text{ as } x \to 0,
\end{equation}
with
\begin{equation}
\label{fjkl equation lemma}
f_\alpha = \begin{cases}
0 & \text{ if } x^\alpha = x_j x_k x_l \, \text{ for some } j \neq k \neq l \neq j, \\
-B x_l & \text{ if } x^\alpha = x_j^2 x_l \text{ for some } j, l. 
\end{cases}
\end{equation}
\end{lemma}

The proof is similar to the previous one. However, in the case $x^\alpha = x_j^2 x_l$, the needed decomposition is a bit more subtle, so let us give full details also here. 

\begin{proof}
If $x^\alpha = x_j x_k x_l$ with $j \neq k \neq l \neq j$, we make the ansatz $\widetilde{W}_\alpha = f(|x|) Y_{jkl}(x/|x|)$, with $Y_{jkl}(\omega) =  \omega_j \omega_k \omega_l$ for $\omega \in \mathbb S^{N-1}$. Observing that $Y_{jkl}$ is a spherical harmonic of degree $3$, $W_{\alpha}$ solves the equation in \eqref{Wjk equation lemma} if and only if $f$ solves the ODE 
\begin{equation}
\label{ode proof 2}
- f''(r) - \frac{N-1}{r} f'(r) + \frac{2N}{r^2} f(r)  + N(N+2) f(r) B^\frac{4}{N-2}(r) = 0 \quad \text{ on } (0, \infty). 
\end{equation} 
By the proof of \cite[Proposition A.1]{KL}, there is a solution to \eqref{ode proof} which satisfies $f(r) \sim r^3$ for $r \in (0, \infty)$. Up to replacing $f$ by a suitable scalar multiple, we may thus assume that $\ds \lim_{r \to 0} f(r) r^{-3} = 1$. It follows that 
\[ \widetilde{W}_{\alpha}(x) = f(|x|) Y_{jkl}(x /|x|) =  f(|x|) |x|^{-3} x_j x_k = (1 + o(1)) x_j x_k x_l = \frac{1}{\alpha!} x^\alpha + o(|x|^3). \]

If on the other hand $x^\alpha = x_j^2 x_l$, first assume that $j \neq l$. Observing that $x_j^2 x_l - x_i^2 x_l$ is a homogeneous harmonic polynomial of degree $3$ for every $i \notin \{j, l\}$, so is 
\[ \sum_{i\notin \{j,l\}} (x_j^2 x_l - x_i^2 x_l) \pm x_j^2 x_l \pm x_l^3 = (N-1) x_j^2 x_l - |x|^2 x_l + x_l^3.  \]
Similarly, $x_l^3 - 3 x_j^2 x_l$ for every $j \neq l$ is  a homogeneous harmonic polynomial of degree $3$, hence so is 
\[ \sum_{j \neq l} x_l^3 - 3 x_j^2 x_l \pm 3 x_l^3 = (N+2) x_l^3 - 3 |x|^2 x_l. \]
Subtracting appropriate scalar multiples of the found expressions from each other, we obtain that 
\[ \widetilde{Y}_{jl}(\omega) = \frac{1}{2} \omega_j^2 \omega_l - \frac{1}{2(N+2)} \omega_l \]
is a spherical harmonic of degree $3$. 

We now set 
\[ \widetilde{W}_{\alpha}(x) = f(|x|)\widetilde{Y}_{jl}(x/|x|) + \widetilde{W}(|x|) \frac{x_l}{|x|}, \]
$f$ is a solution to \eqref{ode proof 2} with $\ds \lim_{r \to 0} f(r) r^{-3} = 1$ and $\widetilde{W}$ is the function from Lemma \ref{lemma U}. Then $ \widetilde{W}_{\alpha}$ satisfies the PDE in \eqref{Wjkl equation lemma}. Moreover, 
\begin{align*}
\widetilde{W}_{\alpha}(x) &= f(|x|) \left(\frac{1}{2}\frac{x_j^2 x_l}{|x|^3} - \frac{1}{2(N+2)} \frac{x_l}{|x|} \right) + \widetilde{W}(|x|) \frac{x_l}{|x|} \\
& =   
\frac{f(|x|)}{|x|^3} \left( \frac{1}{2 x_j^2 x_l} - \frac{1}{2(N+2)} x_l |x|^2 \right) + \frac{\widetilde{W}(|x|)}{|x|^3} x_l |x|^2 \\
&= \frac{1}{2} x_j^2 x_l + o(|x|^3) = \frac{1}{\alpha!} x^\alpha + o(|x|^3).  
\end{align*} 
Here we crucially used that $\frac{\widetilde{W}(r)}{r^3} \to \frac{1}{2(N+2)}$ as $r \to 0$ by Lemma \ref{lemma U}. 

Finally, if $j = l$, that is, if $x^\alpha = x_j^3$, we use instead of $\widetilde{Y}_{jl}$ the function 
\[ \widetilde{Y}_j(\omega) = \frac{1}{6} \omega_j^3 - \frac{1}{2(N+2)} w_\alpha. \]
The rest of the argument is identical. 
\end{proof}


\section{Classical Asymptotic analysis}
\label{appendixB}
In this section we generalize the result of \cite{Druet2010} to $N\geq 3$ under appropriate assumptions. The proof is globally the same except at the level of Claim \ref{estim4} where some refined analysis is needed when $N\geq 4$. As already mentioned in \cite{Druet2010}, the proof follows \cite{DruetHebey09}.

\begin{proposition}
\label{proposition preliminaries multi bis}
Consider a sequence $\left(u_\eps\right)$ of $C^2$ solutions to 
\begin{equation}\label{eq1.1}
\left\{\begin{array}{ll}
{\ds -\Delta u_\eps + h_\eps u_\eps =N(N-2)u_\eps ^\frac{N+2}{N-2}}&{\ds \hbox{ in } \Omega}\\
\, &\,\\
{\ds u_\eps =0 }&{\ds \hbox{ on } \partial\Omega}\\
\, &\, \\
{\ds \ue >0}&{\ds \hbox{ in } \Omega}
\end{array}\right.
\end{equation}
where $\Omega$ is some smooth domain of $\R^N$ and
\begin{equation}\label{eq1.2}
h_\eps \rightarrow h_0 \hbox{ in } C^{0,\eta}(\Omega) \hbox{ as } \eps \rightarrow 0
\end{equation}
if $N=3$, or
$$h_\eps= \eps V$$
where $V\in C^{1}(\Omega) \cup C(\overline{\Omega})$ with $V<0$ on $\overline{\Omega}$ if $N\geq 4$.
 
Then either $\Vert u_\eps\Vert_\infty $ is bounded or, up to extracting a subsequence, there exists $n \in \N$ and points $x_{1,\eps},..., x_{n,\eps}$ such that the following holds. 
\begin{enumerate}[(i)]
\item $x_\ieps \to x_i \in \Omega$ for some $x_i \in \Omega$ with $x_i \neq x_j$ for $i \neq j$. 
\item $\mu_\ieps := u_\eps(x_\ieps)^{-\frac{2}{N-2}} \to 0$ as $\eps \to 0$ and $\nabla u_\eps(x_\ieps) = 0$ for every $i$. 
\item \label{item lambda i} $\lambda_{i,0} := \ds \lim_{\eps \to 0} \lambda_\ieps := \ds \lim_{\eps \to 0} \frac{\mu_\ieps^\frac{N-2}{2}}{\mu_\oeps^\frac{N-2}{2}}$ exists and lies in $(0, \infty)$ for every $i$. 
\item \label{item u to bubble} $\mu_{i,\eps}^{\frac{N-2}{2}} u_\eps(x_{i,\eps} + \mu_{i,\eps} x) \to B$ in $C^1_\text{loc}(\R^n)$. 
\item \label{item tilde G} There are $\nu_i > 0$ such that $\mu_{1,\eps}^{-\frac{N-2}{2}} u_{\eps} \to  \sum_i \nu_i G(x_{i,\eps}, \cdot) =: \widetilde{\mathcal G}$ uniformly in $C^1$ away from $\{x_1,...,x_n\}$, where $G$ is the Green function of $-\Delta +h_0$.
\item There is $C > 0$ such that $u_\eps \leq C \sum_i B_\ieps$ on $\Omega$.  
\end{enumerate}
\end{proposition}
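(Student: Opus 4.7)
The plan is to follow the iterated bubble-extraction blueprint of Druet--Hebey--Robert \cite{Druet2004} combined with the boundary adaptation of Druet--Laurain \cite{Druet2010} for $N=3$, with modifications concentrated at the pointwise-control step where the dimension $N \geq 4$ and the sign hypothesis $V < 0$ enter. Assume $\|u_\eps\|_\infty \to \infty$, pick $x_{1,\eps}$ realizing the maximum of $u_\eps$, and set $\mu_{1,\eps} := u_\eps(x_{1,\eps})^{-2/(N-2)}$, which tends to zero. Standard elliptic theory applied to the rescaled sequence $\tilde u_{1,\eps}(y) := \mu_{1,\eps}^{(N-2)/2} u_\eps(x_{1,\eps} + \mu_{1,\eps} y)$, combined with the Caffarelli--Gidas--Spruck classification, shows $\tilde u_{1,\eps} \to B$ in $C^1_\loc(\R^N)$ provided $x_{1,\eps}$ stays at distance $\gg \mu_{1,\eps}$ from $\partial\Omega$. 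Proceed inductively: once $(x_{j,\eps})_{j \leq k}$ are selected with mutually well-separated bubbles (i.e.\ $|x_{i,\eps} - x_{j,\eps}|/\min(\mu_{i,\eps}, \mu_{j,\eps}) \to \infty$), test whether
\[ w_{k,\eps}(x) := \Big( \min_{1 \leq j \leq k} |x - x_{j,\eps}| \Big)^{(N-2)/2} u_\eps(x) \]
stays uniformly bounded. If not, maximize $w_{k,\eps}$ at a point $x_{k+1,\eps}$ and rescale at scale $\mu_{k+1,\eps}$ to extract a new bubble; the separation property follows automatically from the definition of $w_{k,\eps}$.

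The procedure terminates after finitely many steps thanks to an a priori energy bound. Testing the equation against $u_\eps$ and using the Pohozaev identity (together with elliptic control of boundary terms), the hypothesis $V < 0$ (respectively the $N=3$ hypothesis on $h_0$) yields a uniform bound on $\io u_\eps^{2N/(N-2)}\diff x$. Since every extracted bubble contributes a definite amount of $L^{2^*}$-mass, there are only finitely many; call this number $n$. Along the way, each $x_{i,\eps}$ must stay at distance $\gg \mu_{i,\eps}$ from $\partial\Omega$: otherwise a rescaling at the boundary would produce a nontrivial positive solution of $-\Delta u = N(N-2) u^{(N+2)/(N-2)}$ in a half-space with zero Dirichlet data, which is ruled out by the half-space Pohozaev identity.

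The main technical obstacle is to upgrade the boundedness of $w_{n,\eps}$ into the strong pointwise control $u_\eps(x) \lesssim \sum_{i=1}^n B_{i,\eps}(x)$ of assertion (vi). Following the Green-function method of \cite{Druet2010, DruetHebey09}, argue by contradiction: if $\sup_x u_\eps(x)/\sum_i B_{i,\eps}(x) \to \infty$, pick a maximizer $y_\eps$, set $\nu_\eps := u_\eps(y_\eps)^{-2/(N-2)}$, and rescale. If $\nu_\eps/\mu_{i,\eps} \to 0$ for every $i$, the rescaled profile produces a new concentration point, contradicting the termination of the selection. If $\nu_\eps \sim \mu_{i,\eps}$ for some $i$, the normalized remainder $(u_\eps - B_{i,\eps})/\sup|u_\eps - B_{i,\eps}|$ converges to a bounded-growth solution of the linearized equation \eqref{lin eq} vanishing at the origin together with its gradient, hence identically zero by Proposition \ref{proposition u = 0}, contradicting the normalization. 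The delicate intermediate regime $\mu_{i,\eps} \ll \nu_\eps \ll 1$ is handled exactly as in \cite{Druet2010} for $N=3$; when $N \geq 4$ the sign hypothesis $V < 0$ lets us absorb the linear term $\eps V u_\eps$ directly via the maximum principle on the Green-function representation, removing the subtler cancellations needed in \cite{Druet2010}. This is the step referred to as Claim 4 in the paper.

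Once (vi) is established, the remaining conclusions are soft. The rescaled function $v_\eps := \mu_{1,\eps}^{-(N-2)/2} u_\eps$ is bounded on compacts of $\Omega \setminus \{x_1,\dots,x_n\}$ and satisfies $(-\Delta + h_\eps) v_\eps = N(N-2) \mu_{1,\eps}^{-(N-2)/2} u_\eps^{(N+2)/(N-2)}$, whose right-hand side converges distributionally to $\sum_i \nu_i \delta_{x_i}$ with $\nu_i$ proportional to $\lim_\eps (\mu_{i,\eps}/\mu_{1,\eps})^{(N-2)/2}$, establishing (v) via Green's representation. The comparability $\lambda_{i,0} \in (0,\infty)$ for every $i$ follows: if $\lambda_{i,0} = 0$ for some $i$, the corresponding $\nu_i$ would vanish, so $v_\eps$ would remain bounded near $x_i$, contradicting the peak behaviour of $u_\eps$ at scale $\mu_{i,\eps}$ forced by the $C^1_\loc$-convergence to $B$. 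Finally, replacing each $x_{i,\eps}$ by the nearby interior critical point of $u_\eps$ supplied by the local $C^1$-convergence to the radially symmetric $B$ secures $\nabla u_\eps(x_{i,\eps}) = 0$ and completes the proof.
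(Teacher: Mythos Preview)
Your outline has a genuine gap at the termination step. You claim that testing the equation against $u_\eps$ together with Pohozaev and the sign $V<0$ yields a uniform bound on $\int_\Omega u_\eps^{2^*}$, and hence that only finitely many bubbles can be extracted. No such a priori energy bound is available here: the domain is not assumed star-shaped, and the identity you get from testing against $u_\eps$ only says $\int|\nabla u_\eps|^2 > N(N-2)\int u_\eps^{2^*}$, which does not close. The paper works precisely in the regime where the number of concentration points is \emph{not} assumed finite a priori; this is the whole point of the appendix (and is emphasized in the introduction). Instead of an energy argument, the paper shows that the mutual distances between the selected points $x_{i,\eps}$ and their distances to $\partial\Omega$ are bounded below by a fixed $d>0$ (Claim \ref{iso4}); finiteness then follows because $\Omega$ is bounded.

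You also mislocate the role of $V<0$. It is not used to ``absorb $\eps V u_\eps$ via the maximum principle on the Green representation.'' The sign condition enters in Claim \ref{estim4}, where a local Pohozaev identity on $B(0,1)$ is applied to the rescaled function $\tilde u_\eps$. For $N\ge 4$ the dominant term on the left is $-\eps r_\eps^2 V(x_\eps)\int \tilde u_\eps^2$, whose sign is forced by $V<0$; equating with the boundary term gives $b(0)\le 0$ (and a second Pohozaev identity gives $\nabla b(0)=0$). This sign information on the singular harmonic limit is then the key input in the isolation argument of Claim \ref{iso4}: if two points came together or one approached $\partial\Omega$, the harmonic remainder $b$ would satisfy $b(0)>0$ (from the other singularities or the reflected image), contradicting Claim \ref{estim4}. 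Your sketch of the strong estimate (vi) via the linearized equation and Proposition~\ref{proposition u = 0} is also not the paper's route (and would be circular here, since the appendix is the foundation for Section~\ref{section asymptotic analysis}); the paper obtains (vi) locally from Proposition~\ref{estim} once isolation is known, and extends it globally by Harnack.
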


The proof is divided into many steps. The first one consists in transforming a weak estimate such as \eqref{eq1.5} into a strong one such as \eqref{strestim}  around a concentration point, that is to say that at a certain scale $u_\eps$ behaves like a bubble. So we consider a sequence $u_\eps$ which satisfies the hypotheses of Proposition \ref{proposition preliminaries multi bis} and we also assume that we have a sequence $\left(x_\eps\right)$ of points in $\Omega$ and a sequence $\left(\rho_\eps\right)$ of positive real numbers with $0< 3 \rho_\eps \leq d(x_\eps, \partial\Omega)$  such that 
\beq\label{eq1.3}
\nabla \ue( x_\eps) =0
\eeq
and
\beq\label{eq1.4}
 \rho_\eps \left[ \sup_{B(x_\eps, \rho_\eps)}  u_\eps(x) \right]^\frac{2}{N-2}  \rightarrow + \infty \hbox{ as }\eps \rightarrow 0 \hskip.1cm.
\eeq
First, we prove that, under this extra assumption, the following holds~:
\begin{proposition}\label{estim}
If there exists $C_0 >0$ such that
\beq\label{eq1.5}
\vert x_\eps - x \vert^\frac{N-2}{2} u_\eps\leq C_0 \hbox{ in } B(x_\eps, 3\rho_\eps)\hskip.1cm,
\eeq
then there exists $C_1 >0$ such that
\beq
\label{strestim}
\begin{split}
&u_\eps(x_\eps) u_\eps(x) \leq C_1 \vert x_\eps - x \vert^{2-N}\hbox{ in } B(x_\eps, 2\rho_\eps)\setminus \{x_
\eps \}\hbox{ and }\\
&u_\eps(x_\eps) \vert\nabla u_\eps(x)\vert \leq C_1 \vert x_\eps - x \vert^{1-N}\hbox{ in } B(x_\eps, 2\rho_\eps)\setminus \{x_
\eps \}.
\end{split}
\eeq
Moreover, if $\rho_\eps \rightarrow 0$, then 
\be 
\rho_\eps^{N-2} u_\eps(x_\eps) u_\eps(x_\eps+\rho_\eps x) \rightarrow \frac{1}{\vert x\vert^{N-2}}+ b  \hbox{ in } C^1_{loc}(B(0, 2)\setminus \{0\})\hbox{ as }\eps\to 0
\ee
where $b$ is some harmonic function in $B(0,2)$ with $b(0)\leq 0$ and $\nabla b(0)=0$.
\end{proposition}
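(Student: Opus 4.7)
The plan is to set up a two-scale bubble analysis---an inner scale $\mu_\eps$ where the standard bubble $B$ appears and an intermediate scale $d_\eps \in [r_0 \mu_\eps, 2\rho_\eps]$ used in a contradiction argument to extend control outward---and then to read off the asymptotic profile via the Green's representation. Combining \eqref{eq1.3}--\eqref{eq1.5}, I first show $u_\eps(x_\eps) \to \infty$, so $\mu_\eps := u_\eps(x_\eps)^{-2/(N-2)} \to 0$, and $\rho_\eps/\mu_\eps \to \infty$. The rescaling $v_\eps(y) := \mu_\eps^{(N-2)/2} u_\eps(x_\eps + \mu_\eps y)$ satisfies the rescaled equation on $B(0, 3\rho_\eps/\mu_\eps)$, with $v_\eps(0) = 1$, $\nabla v_\eps(0) = 0$, pointwise bound $v_\eps(y) \leq C_0 (1+|y|)^{-(N-2)/2}$ from \eqref{eq1.5}, and perturbative potential $\mu_\eps^2 h_\eps$ tending to zero. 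Elliptic regularity together with the Caffarelli--Gidas--Spruck classification gives $v_\eps \to B$ in $C^2_{\loc}(\R^N)$, so $\psi_\eps(x) := u_\eps(x_\eps) u_\eps(x) |x-x_\eps|^{N-2}$ is bounded on $\{|x-x_\eps| \leq r_0 \mu_\eps\}$ for every fixed $r_0$.

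Extending $\psi_\eps \leq C_1$ to the outer region $\{r_0 \mu_\eps \leq |x-x_\eps| \leq 2\rho_\eps\}$ is then done by contradiction. Assume $\psi_\eps(y_\eps) \to \infty$ at some $y_\eps$ in this range, and set $d_\eps := |y_\eps - x_\eps|$; the bubble picture of the first step forces $d_\eps/\mu_\eps \to \infty$. Rescaling at scale $d_\eps$, the function $\hat u_\eps(z) := d_\eps^{(N-2)/2} u_\eps(x_\eps + d_\eps z)$ solves the critical Yamabe equation on an expanding domain containing $B(0, 3/2)$ with vanishing potential and with bound $\hat u_\eps(z) \leq C_0 |z|^{-(N-2)/2}$. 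Elliptic regularity on compact subsets of $\R^N \setminus \{0\}$ extracts a subsequential limit $\hat u \geq 0$ solving $-\Delta \hat u = N(N-2) \hat u^{(N+2)/(N-2)}$ there, with the same a priori bound; scale invariance of the $L^{2N/(N-2)}$-norm and the uniform energy bound on $(u_\eps)$ moreover give $\hat u \in L^{2N/(N-2)}(\R^N)$. The main obstacle is showing $\hat u \equiv 0$: the finite $L^{2N/(N-2)}$-norm rules out Fowler-type radial singular solutions on $\R^N \setminus \{0\}$, and the Caffarelli--Gidas--Spruck classification then leaves either $\hat u \equiv 0$ or $\hat u$ a regular bubble $B_{\mu, z_0}$. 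Excluding such a ``second bubble'' is precisely where the refined analysis in $N \geq 4$ enters: one applies a Pohozaev identity to $u_\eps$ on a small ball around the corresponding point $x_\eps + d_\eps z_0$, crucially using the sign $V < 0$ to close the argument (with the case $N=4$ requiring extra care because of the logarithmic divergence of $\int B^2$). Once $\hat u \equiv 0$ is established, a Harnack inequality on $\{1/2 \leq |z| \leq 2\}$ forces $\hat u_\eps \to 0$ uniformly there, which combined with a Green's-representation estimate pinning $\psi_\eps(y_\eps)$ to a bounded bubble-tail quantity contradicts $\psi_\eps(y_\eps) \to \infty$.

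Once \eqref{strestim} is in hand, the gradient bound follows from standard interior Schauder estimates applied to the rescaled equation on balls of radius $|x-x_\eps|/2$. For the $C^1_{\loc}$ convergence of $\Phi_\eps(z) := \rho_\eps^{N-2} u_\eps(x_\eps) u_\eps(x_\eps + \rho_\eps z)$ when $\rho_\eps \to 0$, I would apply the Green's representation on $B(x_\eps, 3\rho_\eps)$: the concentrated nonlinearity $N(N-2) u_\eps^{(N+2)/(N-2)}$ behaves distributionally like a Dirac at $x_\eps$ of total mass of order $\mu_\eps^{(N-2)/2}$, producing the $|z|^{-(N-2)}$ singular profile after rescaling, while the lower-order contributions, controlled by \eqref{strestim}, converge in $C^1$ to a harmonic function $b$ on $B(0,2)$. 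The constraint $\nabla b(0) = 0$ is inherited from \eqref{eq1.3}, and $b(0) \leq 0$ records the sub-leading balance between $u_\eps$ and its leading bubble approximation.
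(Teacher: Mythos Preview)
Your outline has the right outer shape (inner bubble, contradiction at an intermediate scale, Pohozaev for the sign) but the central step---upgrading the weak bound \eqref{eq1.5} to the strong estimate \eqref{strestim}---does not close as written.

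First, no global energy bound is assumed in Proposition~\ref{estim}; the only input is \eqref{eq1.3}--\eqref{eq1.5}. So your claim that $\hat u\in L^{2N/(N-2)}(\R^N)$, used to rule out Fowler-type singular solutions, is not justified. More seriously, even granting $\hat u\equiv 0$, your contradiction does not follow. With $d_\eps=|y_\eps-x_\eps|$ and $\hat u_\eps(z)=d_\eps^{(N-2)/2}u_\eps(x_\eps+d_\eps z)$ one has
\[
\psi_\eps(y_\eps)=u_\eps(x_\eps)\,u_\eps(y_\eps)\,d_\eps^{\,N-2}=\Bigl(\tfrac{d_\eps}{\mu_\eps}\Bigr)^{(N-2)/2}\hat u_\eps(\hat y_\eps),
\]
so $\hat u_\eps\to 0$ on a fixed annulus is an indeterminate form against $(d_\eps/\mu_\eps)^{(N-2)/2}\to\infty$. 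Your proposed Green's-representation fix cannot close using only \eqref{eq1.5}: on the shell $R\mu_\eps\le|y-x_\eps|\le d_\eps$ the weak bound gives $u_\eps^{(N+2)/(N-2)}\lesssim|y-x_\eps|^{-(N+2)/2}$, and integrating this against $|y_\eps-y|^{2-N}$ produces a term of exact size $d_\eps^{-(N-2)/2}$, i.e.\ $\psi_\eps(y_\eps)\lesssim (d_\eps/\mu_\eps)^{(N-2)/2}$, which is no improvement. The paper breaks this circularity with two devices you do not have: the spherical-average monotonicity radius $r_\eps$ (the largest $r\in[2\mu_\eps,\rho_\eps]$ on which $r^{(N-2)/2}\bar u_\eps(r)$ is nonincreasing), and a barrier $\Phi_{\eps,\nu}=\mu_\eps^{(N-2)(1-2\nu)/2}\mathcal G^{1-\nu}+\alpha_\eps(r_\eps^{N-2}\mathcal G)^\nu$. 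A maximum-principle argument for $u_\eps/\Phi_{\eps,\nu}$ yields an intermediate decay strong enough that the Green's representation then upgrades it to \eqref{strestim} on $B(x_\eps,r_\eps)$; the monotonicity inequality $\psi_\eps(\beta r_\eps)\ge\psi_\eps(r_\eps)$ is what controls the boundary quantity $\alpha_\eps=\sup_{\partial B(x_\eps,r_\eps)}u_\eps$. Only after this do the Pohozaev identities enter, to show $b(0)\le 0$ (hence $r_\eps=\rho_\eps$, since $r_\eps<\rho_\eps$ would force $b(0)=1$).

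Two smaller points. Your statement that $\nabla b(0)=0$ ``is inherited from \eqref{eq1.3}'' is not correct: the convergence is only $C^1_{\loc}$ away from $0$, so one cannot evaluate at the origin; the paper obtains $\nabla b(0)=0$ from the vector Pohozaev identity \eqref{pohozaev4}. Likewise $b(0)\le 0$ is not a vague ``sub-leading balance'' but comes from the scalar Pohozaev identity \eqref{pohozaev1}, where for $N\ge 4$ one must expand $\eps r_\eps^2\int\tilde V\tilde u_\eps^2$ and use $V<0$ (with a logarithm when $N=4$); this is exactly the place you flagged as delicate, but it belongs at the $r_\eps$-scale limit, not around a hypothetical second bubble.
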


\subsection{Proof of Proposition \ref{estim}}
We divide the proof of the proposition into several claims. The first one gives the asymptotic behaviour of $u_\eps$ around $x_\eps$ at an appropriate small scale. 

\begin{claim}
\label{estim1}
After passing to a subsequence, we have that
\beq\label{eq1.7}
\mu_\eps^\frac{N-2}{2}  u_\eps(x_\eps + \mu_\eps x) \rightarrow B
\hbox{ in } C^1_{loc}(\R^3), \hbox{ as } \eps \rightarrow 0, 
\eeq
where $\mu_\eps = u_\eps\left(x_\eps\right)^{\frac{2}{2-N}}$.
\end{claim}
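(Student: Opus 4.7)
The argument is a blow-up analysis. I first rescale at a maximum point $y_\eps$ of $u_\eps$ on $\overline{B(x_\eps,\rho_\eps)}$, extract a limiting standard bubble via the Caffarelli--Gidas--Spruck classification, and then use the critical-point hypothesis \eqref{eq1.3} to transfer the centering from $y_\eps$ to $x_\eps$. The two quantitative inputs \eqref{eq1.4} and \eqref{eq1.5} respectively ensure that the natural rescaling scale is much smaller than $\rho_\eps$ and that $x_\eps$ lies within the natural length scale of the bubble.

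Concretely, let $y_\eps$ realize $\max_{\overline{B(x_\eps,\rho_\eps)}} u_\eps$ and set $\tilde\mu_\eps := u_\eps(y_\eps)^{-2/(N-2)}$. By \eqref{eq1.4}, $\tilde\mu_\eps/\rho_\eps \to 0$; by \eqref{eq1.5} applied at $y_\eps$, $|x_\eps - y_\eps| \leq C_0^{2/(N-2)} \tilde\mu_\eps$. The function $\tilde v_\eps(x) := \tilde\mu_\eps^{(N-2)/2} u_\eps(y_\eps + \tilde\mu_\eps x)$ then satisfies $0 < \tilde v_\eps \leq 1 = \tilde v_\eps(0)$ on a ball $B(0,R_\eps)$ with $R_\eps \to \infty$, together with
\[ -\Delta \tilde v_\eps + \tilde\mu_\eps^2\, h_\eps(y_\eps + \tilde\mu_\eps \cdot)\, \tilde v_\eps = N(N-2)\, \tilde v_\eps^{\frac{N+2}{N-2}}. \]
Under either hypothesis on $h_\eps$ in Proposition \ref{proposition preliminaries multi bis}, $h_\eps$ is uniformly bounded, so the coefficient $\tilde\mu_\eps^2 h_\eps(y_\eps + \tilde\mu_\eps \cdot)$ tends to $0$ uniformly on compacts. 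Standard interior elliptic regularity extracts a subsequence with $\tilde v_\eps \to \tilde v$ in $C^1_{\loc}(\R^N)$, where $\tilde v$ solves $-\Delta \tilde v = N(N-2)\, \tilde v^{(N+2)/(N-2)}$ on $\R^N$ with $\tilde v(0) = 1$, $\tilde v \leq 1$, and $\tilde v > 0$ (by the strong maximum principle). The Caffarelli--Gidas--Spruck classification, together with the fact that $0$ is a global maximum of $\tilde v$ with value $1$, forces $\tilde v \equiv B$.

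It remains to recenter the rescaling at $x_\eps$. Since $\nabla u_\eps(x_\eps) = 0$, the bounded point $\xi_\eps := (x_\eps - y_\eps)/\tilde\mu_\eps$ satisfies $\nabla \tilde v_\eps(\xi_\eps) = 0$; extracting a further subsequence, $\xi_\eps \to \xi_\infty$ with $\nabla B(\xi_\infty) = 0$, and since $0$ is the unique critical point of $B$ one gets $\xi_\infty = 0$. Evaluating the convergence at $\xi_\eps$ gives $u_\eps(x_\eps)/u_\eps(y_\eps) = \tilde v_\eps(\xi_\eps) \to 1$, hence $\mu_\eps/\tilde\mu_\eps \to 1$ and $\mu_\eps \to 0$. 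Writing
\[ \mu_\eps^{\frac{N-2}{2}} u_\eps(x_\eps + \mu_\eps x) = \left(\frac{\mu_\eps}{\tilde\mu_\eps}\right)^{\frac{N-2}{2}} \tilde v_\eps\!\left(\xi_\eps + \frac{\mu_\eps}{\tilde\mu_\eps}\, x\right), \]
the $C^1_{\loc}$ convergence of $\tilde v_\eps$ to $B$ combined with $\xi_\eps \to 0$ and $\mu_\eps/\tilde\mu_\eps \to 1$ yields \eqref{eq1.7}. The main obstacle is precisely this recentering step: without \eqref{eq1.3} one could only identify the limit with a translate of $B$, and it is the uniqueness of the critical point of $B$ which pins down the bubble at the origin and guarantees that the rescaling scale is asymptotically $u_\eps(x_\eps)^{-2/(N-2)}$ rather than $u_\eps(y_\eps)^{-2/(N-2)}$ with a nontrivial shift.
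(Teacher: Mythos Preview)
Your proof is correct and follows essentially the same approach as the paper: rescale at a maximum point $y_\eps$ of $u_\eps$ on $\overline{B(x_\eps,\rho_\eps)}$, use \eqref{eq1.4} and \eqref{eq1.5} to show the rescaled domain exhausts $\R^N$ and $|x_\eps-y_\eps|=O(\tilde\mu_\eps)$, apply elliptic regularity and the Caffarelli--Gidas--Spruck classification to identify the limit as $B$, and finally use the critical-point condition \eqref{eq1.3} together with the uniqueness of the critical point of $B$ to recenter at $x_\eps$. The only difference is cosmetic: you spell out the final recentering identity explicitly, whereas the paper leaves it implicit after showing $\mu_\eps/\tilde\mu_\eps\to 1$.
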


\medskip {\bf Proof of Claim \ref{estim1}.} Let $\tilde{x}_\eps \in \overline{B(x_\eps, \rho_\eps)}$ and $\tilde{\mu}_\eps>0$ be such that
\beq\label{eq1.8}
u_\eps(\tilde{x}_\eps) = \sup_{B(x_\eps, \rho_\eps)} u_\eps = \tilde{\mu}_\eps^{\frac{2-N}{2}} \hskip.1cm.
\eeq
Thanks to (\ref{eq1.4}), we have that 
\beq\label{eq1.9}
\tilde{\mu}_\eps\to 0 \hbox{ and }\frac{\rho_\eps}{\tilde{\mu}_\eps}\to +\infty\hbox{ as }\eps\to 0\hskip.1cm.
\eeq
Thanks to  (\ref{eq1.5}), we also have that
\beq
\label{eq1.10}
\vert x_\eps-\tilde{x}_\eps\vert= O(\tilde{\mu}_\eps).
\eeq
We set for $\displaystyle x \in \Omega_\eps =\left\{ x\in\R^N \hbox{ s.t. } \tilde{x}_\eps + \tilde{\mu}_\eps x \in \Omega \right\}$, 
\be
\tue (x)= \tilde{\mu}_\eps^{\frac{N-2}{2}}  u_\eps(\tilde{x}_\eps + \tilde{\mu}_\eps x) 
\ee
which verifies
\beq
\label{eq1.11}
\begin{split}
&-\Delta \tue +\tilde{\mu}_\eps^2 \tilde{h}_\eps \tue =N(N-2)\tue ^\frac{N+2}{N-2} \hbox{ in } \Omega_\eps\hskip.1cm, \\
&\tue(0) = \sup_{ B(\frac{x_\eps -\tilde{x}_\eps}{\tilde{\mu}_\eps}, \frac{\rho_\eps}{\tilde{\mu}_\eps})} \tue =1\hskip.1cm,
\end{split}
\eeq
where $\tilde{h}_\eps= h\left(\tilde{x}_\eps + \tilde{\mu}_\eps x\right)$. Thanks to  (\ref{eq1.9}) and (\ref{eq1.10}), we get that
\beq
\label{eq1.12}
B\left(\frac{x_\eps -\tilde{x}_\eps}{\tilde{\mu}_\eps}, \frac{\rho_\eps}{\tilde{\mu}_\eps}\right) \rightarrow \R^N \hbox{ as } \eps \rightarrow 0 \hskip.1cm.
\eeq
Now, thanks to (\ref{eq1.11}), (\ref{eq1.12}), and by standard elliptic theory, we get that, after passing to a subsequence, $\tue\rightarrow B$ in $C^1_{loc}(\R^N)$ as $\eps \rightarrow 0$, where $B$ satisfies
\be 
-\Delta B= N(N-2) B^\frac{N+2}{N-2} \hbox{ in } \R^N \hbox{ and } 0\leq B \leq1=U(0) \hskip.1cm.
\ee
Thanks to the work of Caffarelli, Gidas and Spruck \cite{CGS}, we know that
\be
B(x) = \left(1+ \vert x \vert^{2}  \right)^{-\frac{N-2}{2}} \hskip.1cm.
\ee
Moreover, thanks to (\ref{eq1.10}), we know that, after passing to a new subsequence, $\frac{x_\eps -\tilde{x}_\eps}{\tilde{\mu}_\eps} \rightarrow x_0 \hbox{ as } \eps \rightarrow 0$ for some $x_0 \in \R^N$. Hence, since $x_\eps$ is a critical point of $u_\eps$, $x_0$ must be a critical point  of $U$, namely  $x_0=0$. We deduce that $\frac{\mu_\eps}{\tilde{\mu}_\eps} \rightarrow 1$ where $\mu_\eps$ is as in the statement of the claim. Claim \ref{estim1} follows. \hfill $\square$

\medskip For $0\leq r \leq 3 \rho_\eps$, we set
$$\psi_\eps(r)= \frac{r^{\frac{N-2}{2}} }{\omega_{N-1} r^{N-1}} \int_{\partial B({x_\eps},r)} u_\eps \,  d\sigma, $$
where $d\sigma$ denotes the Lebesgue measure on the sphere $\partial B({x_\eps},r)$ and $\omega_{N-1}$ is the volume of the unit $(N-1)$-sphere. 
We easily check, thanks to Claim \ref{estim1}, that
\beq\label{eq1.13}
\psi_\eps(\mu_\eps r)= \left( \frac{r}{1+ r^2}\right)^\frac{N-2}{2} + o(1), \quad \psi_\eps ' (\mu_\eps r)= \frac{N-2}{2} \left( \frac{r}{1+ r^2}\right)^\frac{N}{2} \left(\frac{1}{r^2} - 1 \right) + o(1)\hskip.1cm.
\eeq
We define $r_\eps$ by
\be
r_\eps = \max \left\{ r\in[ 2\mu_\eps, \rho_\eps] \hbox{ s.t. }   \psi_\eps ' (s) \leq  0 \hbox{ for } s \in [2 \mu_\eps, r] \right\} \hskip.1cm.
\ee
Thanks to (\ref{eq1.13}), the set on which the maximum is taken is not empty for $\eps$ small enough, and moreover 
\beq\label{eq1.14}
\frac{r_\eps}{\mu_\eps} \rightarrow + \infty   \hbox{ as } \eps \rightarrow 0\hskip.1cm.
\eeq
We now prove the following: 

\begin{claim} \label{estim2} 
There exists $C >0$, independent of $\eps$, such that
\be
\begin{split}
&u_\eps(x) \leq C \mu_\eps^\frac{N-2}{2} \vert x_\eps - x \vert^{2-N}\hbox{ in } B(x_\eps, 2r_\eps)\setminus \{x_
\eps \}\hbox{ and } \\
&\vert \nabla u_\eps(x) \vert \leq C  \mu_\eps^\frac{N-2}{2} \vert x_\eps - x \vert^{1-N}\hbox{ in } B(x_\eps, 2r_\eps)\setminus \{x_
\eps \}\hskip.1cm.
\end{split}
\ee
\end{claim}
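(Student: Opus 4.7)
The goal is to upgrade the weak bound $(\ref{eq1.5})$, which only says $u_\eps\lesssim |x-x_\eps|^{-(N-2)/2}$, to the Green-function-tail bound $u_\eps\lesssim \mu_\eps^{(N-2)/2}|x-x_\eps|^{-(N-2)}$ expected from a standard bubble at scale $\mu_\eps$. Following the strategy of \cite{Druet2010,DruetHebey09}, I would argue by contradiction via a blow-up analysis. Assume
\[
M_\eps := \sup_{x\in B(x_\eps,2r_\eps)\setminus\{x_\eps\}}\frac{|x-x_\eps|^{N-2}u_\eps(x)}{\mu_\eps^{(N-2)/2}}\longrightarrow+\infty
\]
and select $y_\eps\in B(x_\eps,2r_\eps)\setminus\{x_\eps\}$ realizing this supremum (up to a factor), with $s_\eps:=|y_\eps-x_\eps|$. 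Claim \ref{estim1} forces $r^{N-2}u_\eps(x)=O(\mu_\eps^{(N-2)/2})$ whenever $|x-x_\eps|\lesssim \mu_\eps$, so necessarily $s_\eps/\mu_\eps\to\infty$: the failure can only occur in the Green-function regime of the bubble.

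Next, rescale by defining $v_\eps(y):=u_\eps(x_\eps+s_\eps y)/u_\eps(y_\eps)$ on $B(0,2r_\eps/s_\eps)\setminus\{0\}$. Then $v_\eps(\omega_\eps)=1$ for some $\omega_\eps\in\mathbb{S}^{N-1}$, and the maximality of $M_\eps$ forces $v_\eps(y)\le (1+o(1))|y|^{-(N-2)}$ throughout. The equation becomes
\[
-\Delta v_\eps + s_\eps^{2}\,h_\eps(x_\eps+s_\eps\cdot)\,v_\eps = N(N-2)\,c_\eps\, v_\eps^{(N+2)/(N-2)},\qquad c_\eps:=s_\eps^{2}u_\eps(y_\eps)^{4/(N-2)},
\]
with $c_\eps=O(1)$ by $(\ref{eq1.5})$ and $s_\eps^2 h_\eps\to 0$. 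The monotonicity of $\psi_\eps$ on $[2\mu_\eps,r_\eps]$, together with $\psi_\eps(2\mu_\eps)=O(1)$ inherited from Claim \ref{estim1}, translates in rescaled variables to a uniform control on the spherical averages $\bar v_\eps(\tau)$, which scales like $(s_\eps/\mu_\eps)^{(N-2)/2}/M_\eps\cdot \tau^{-(N-2)/2}$. Standard elliptic estimates then produce a $C^{1}_{\mathrm{loc}}(\mathbb{R}^{N}\setminus\{0\})$ subsequential limit $v_{0}\ge 0$ satisfying $v_{0}(\omega_{0})=1$, $v_{0}(y)\le|y|^{-(N-2)}$, and either $-\Delta v_{0}=0$ (when $c_\eps\to 0$) or $-\Delta v_{0}=N(N-2)c_{0}v_{0}^{(N+2)/(N-2)}$ (when $c_\eps\to c_{0}>0$) on $\mathbb{R}^{N}\setminus\{0\}$.

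The main obstacle is closing the contradiction, and here the selection of $s_\eps$ is crucial. In the harmonic case, B\^ocher's theorem plus the pointwise decay yield $v_{0}(y)=c\,|y|^{-(N-2)}$ with $c=1$; in the nonlinear case, the classification of Caffarelli--Gidas--Spruck restricts $v_0$ to a rescaled standard bubble or a Fowler-type singular solution. In each case the spherical average of $v_{0}$ on $\mathbb{S}^{N-1}$ is a strictly positive number, so the identity $\bar v_\eps(1)=O((s_\eps/\mu_\eps)^{(N-2)/2}/M_\eps)$ yields a contradiction once $s_\eps$ is chosen so that $M_\eps(s_\eps/\mu_\eps)^{-(N-2)/2}\to+\infty$. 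This is the delicate step: rather than taking $s_\eps$ where $M_\eps$ is attained, one picks it as the largest radius where a slightly slacker quantity is still $\gg 1$, exploiting the sharp compatibility between $(\ref{eq1.5})$ (which bounds $r^{(N-2)/2}u_\eps$ by $C_0$) and the divergence of $M_\eps$ to produce the required scale separation. This is precisely the place where the argument is identical to \cite{Druet2010} when $N=3$ and where the refined analysis for higher $N$ referenced in the introduction does not yet intervene; the novelties for $N\ge 4$ appear only in the later Claim \ref{estim4}.

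Finally, once the sup bound is proven, the gradient bound is routine: setting $\tilde u_r(y):=r^{N-2}\mu_\eps^{-(N-2)/2}u_\eps(x_\eps+ry)$ for $r\in(\mu_\eps,r_\eps]$, the already-established bound shows $\tilde u_r$ is uniformly bounded on the unit annulus $B(0,2)\setminus B(0,1/2)$, the rescaled equation has uniformly bounded coefficients, and standard interior $C^{1}$ estimates give $|\nabla\tilde u_r|=O(1)$ there, which translates back to $|\nabla u_\eps(x)|\lesssim\mu_\eps^{(N-2)/2}|x-x_\eps|^{-(N-1)}$ on $B(x_\eps,2r_\eps)\setminus\{x_\eps\}$.
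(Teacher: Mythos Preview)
Your blow-up strategy has a genuine gap at exactly the place you flag as ``delicate.'' You want to choose $s_\eps$ so that $M_\eps(s_\eps/\mu_\eps)^{-(N-2)/2}\to+\infty$, but this quantity equals $s_\eps^{(N-2)/2}u_\eps(y_\eps)$, which is \emph{bounded above} by $C_0$ via (\ref{eq1.5}) and in fact tends to zero once $s_\eps/\mu_\eps\to\infty$, by the monotonicity of $\psi_\eps$ combined with Harnack (this is precisely (\ref{eq1.17})). So your spherical-average bound $\bar v_\eps(1)=O\bigl((s_\eps/\mu_\eps)^{(N-2)/2}/M_\eps\bigr)$ has a right-hand side that \emph{diverges}, and no contradiction results. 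No alternative choice of $s_\eps$ can repair this: whatever radius you pick, the weak bound (\ref{eq1.5}) caps $r^{N-2}u_\eps/\mu_\eps^{(N-2)/2}$ by $C_0(r/\mu_\eps)^{(N-2)/2}$, so the ratio you need to send to $+\infty$ is intrinsically bounded. The same obstruction kills the ``nonlinear case'' branch: since $c_\eps=\bigl(s_\eps^{(N-2)/2}u_\eps(y_\eps)\bigr)^{4/(N-2)}\to 0$, that case never arises.

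The paper takes a different route, replacing the direct blow-up by a comparison argument. For $0<\nu<\frac12$ it builds the explicit barrier
\[
\Phi_{\eps,\nu}=\mu_\eps^{\frac{N-2}{2}(1-2\nu)}\mathcal G(x_\eps,\cdot)^{1-\nu}+\alpha_\eps\bigl(r_\eps^{N-2}\mathcal G(x_\eps,\cdot)\bigr)^{\nu},
\]
and shows that at an interior maximum of $u_\eps/\Phi_{\eps,\nu}$ one would need $|x-x_\eps|^2 u_\eps^{4/(N-2)}\gtrsim\nu(1-\nu)$, contradicting (\ref{eq1.17}). This yields the $\nu$-weakened estimate (\ref{eq1.15}); a Green-representation bootstrap then removes the $\nu$ to give $u_\eps\lesssim \mu_\eps^{(N-2)/2}|x-x_\eps|^{2-N}+\alpha_\eps$, and finally the monotonicity $\psi_\eps(\beta r_\eps)\ge\psi_\eps(r_\eps)$ for $\beta<1$ together with Harnack forces $\alpha_\eps=O(\mu_\eps^{(N-2)/2}r_\eps^{2-N})$. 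The ingredient you are missing is precisely this supersolution: the fractional power $\mathcal G^{1-\nu}$ is what converts the qualitative smallness (\ref{eq1.17}) into quantitative decay. Your gradient-estimate paragraph, on the other hand, is correct and matches the paper.
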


\medskip {\bf Proof of Claim \ref{estim2}.}  We first prove that for any given $0< \nu <\frac{1}{2}$, there exists $C_\nu >0 $ such that 
\beq\label{eq1.15}
\ue(x) \leq C_\nu \left( \mu_{\eps}^{\frac{N-2}{2}(1-2\nu)} \vert x-x_\eps \vert^{(2-N)(1-\nu)} + \alpha_\eps 
\left(\frac{r_{\eps}}{\vert x-x_\eps \vert}\right)^{(N-2)\nu} \right)
\eeq
for all $x\in B\left(x_{\eps},2r_{\eps}\right)$ and $\eps$ small enough, where
\beq\label{eq1.15bis}
\alpha_\eps=  \sup_{ \partial B\left(x_\eps, r_\eps\right)} \ue \hskip.1cm.
\end{equation}
First of all, we can use (\ref{eq1.5}) and apply the Harnack inequality, see Lemma \ref{harn}, to get the existence of some $C>0$ such that 
\beq\label{eq1.16}
\frac{1}{C} \max_{\partial B(x_\eps, r)} \left(\ue+r\left\vert \nabla u_\eps\right\vert\right) \leq \frac{1}{\omega_{N-1} r^{N-1}} \int_{\partial B({x_\eps},r)} u_\eps d\sigma \leq C \min_{\partial B(x_\eps, r)}\ue  
\eeq
for all  $0  <r < \frac{5}{2} \rho_\eps $ and all $ \eps>0$. Hence, thanks to (\ref{eq1.13}) and (\ref{eq1.14}), we have that 
$$\vert x-x_\eps\vert^\frac{N-2}{2} \ue (x) \leq C \psi_\eps(r) 
\leq C \psi_\eps(R \mu_{\eps})=C \left( \frac{R}{1+ R^2}\right)^\frac{N-2}{2} + o(1)$$
for all $R\geq  2$, all $r\in[ R \mu_{\eps},r_\eps ]$, all $\eps$ small enough and all $x\in \partial B\left(x_{\eps},r\right)$. Thus we get that 
\beq\label{eq1.17}
\sup_{B(x_\eps, r_\eps)\setminus B(x_\eps, R \mu_\eps)}  \vert x-x_\eps\vert^\frac{N-2}{2} \ue (x) = e(R)+o(1)\, ,
\eeq
where $e(R)\rightarrow 0$ as $R \rightarrow +\infty$. Let  Let $\mathcal G(x,y)=\frac{1}{(N-2)\omega_{N-1}} \frac{1}{\vert x-y\vert^{N-2}}$, in particular
$$-\Delta \mathcal G( \, ,y)=\delta_y \text{ on }\R^N .$$ 
We fix $0<\nu < \frac{1}{2}$ and we set
 \be 
 \Phi_{\eps,\nu} = \mu_\eps^{\frac{N-2}{2}(1-2\nu)} {\mathcal G}(x_\eps,x)^{1-\nu}+\alpha_\eps \bigl(r_\eps^{N-2} {\mathcal G}(x_\eps,x)\bigr)^{\nu} .
 \ee
Then (\ref{eq1.15}) reduces to proving that
\be
\sup_{B(x_\eps, 2 r_\eps)} \frac{\ue}{ \Phi_{\eps,\nu}} = \mathcal O(1)\hskip.1cm.
\ee
We let $y_\eps \in \overline{B(x_\eps, 2 r_\eps)\setminus\{x_{\eps}\}} $ be such that  
\be
\sup_{B(x_\eps, 2 r_\eps)} \frac{\ue}{ \Phi_{\eps,\nu}} =  \frac{\ue(y_\eps)}{ \Phi_{\eps,\nu}(y_\eps)}.
\ee
We are going to consider the various possible behaviors of the sequence $\left(y_\eps\right)$.

First of all, assume that there is $R < \infty$ such that 
\be
\frac{\vert x_{\eps} -y_\eps\vert}{\mu_\eps} \rightarrow R \hbox{ as } \eps \rightarrow 0\hskip.1cm.
\ee
Thanks to Claim \ref{estim1}, we have in this case that  
\be
\mu_\eps^\frac{N-2}{2}  \ue(y_\eps) \rightarrow (1+R^2)^{-\frac{N-2}{2}} \hbox{ as } \eps \rightarrow 0.
\ee
On the other hand, we can write that 
\begincal
\mu_\eps^\frac{N-2}{2}  \Phi_{\eps,\nu}(y_\eps) &= &\left( \frac{\mu_\eps^{N-2}}{(N-2)\omega_{N-1} \vert x_{\eps} -y_\eps\vert^{N-2}}\right)^{1-\nu} + \mathcal O\left(\alpha_\eps \mu_\eps^\frac{N-2}{2} \left(\frac{r_\eps}{\vert x_{\eps} -y_\eps\vert}\right)^{(N-2)\nu}\right)  \\
 &=&\left( (N-2)R^{N-2}\omega_{N-1}\right)^{\nu-1} + \mathcal O\left((r_\eps^{\frac{N-2}{2}} \alpha_\eps) \mu_\eps^{\frac{N-2}{2}(1-2\nu)} r_\eps^{\frac{1}{2}(2\nu-1)} \right)  \\
 &=&\left( (N-2)R^{N-2}\omega_{N-1}\right)^{\nu-1} + o(1) ,
\fincal
if $R>0$, and $\mu_\eps^\frac{N-2}{2}  \Phi_{\eps,\nu}(y_\eps) \rightarrow +\infty$ as $\eps \rightarrow 0$ if $R=0$. In any case, $\left(\frac{\ue(y_\eps)}{ \Phi_{\eps,\nu(y_\eps)}}\right)$ is bounded. 

Assume now that there exists $\delta>0$ such that $y_\eps \in B(x_\eps ,r_\eps)\setminus B(x_\eps ,\delta r_\eps)$. Thanks to Harnack's inequality (\ref{eq1.16}), we get that $\ue(y_\eps) =\mathcal O(\alpha_\eps)$ which easily gives that $\frac{\ue(y_\eps)}{ \Phi_{\eps,\nu(y_\eps)}} = \mathcal O (1)$.

Hence, we are left with the following situation:
\beq\label{eq1.20}
 \frac{\vert x_\eps -y_\eps\vert}{r_\eps} \rightarrow 0 \, \hbox{ and } \,  \frac{\vert x_{\eps} -y_\eps\vert}{\mu_\eps} \rightarrow +\infty \, \hbox{ as } \, \eps \rightarrow 0\hskip.1cm.
\eeq
Thanks to the definition of $y_\eps$, we can then write that
\be
\frac{-\Delta u_\eps(y_\eps)}{u_\eps(y_\eps)} \geq \frac{-\Delta \Phi_{\eps, \nu}(y_\eps)}{\Phi_{\eps, \nu} (y_\eps)}. 
\ee
Thanks to the definition of $\Phi_{\eps, \nu}$ and multiplying by $\vert x_\eps -y_\eps \vert^2$, this gives 
\begin{align*}
&\vert x_\eps -y_\eps \vert^{2}(-h_\eps(y_\eps) +N(N-2) u_\eps(y_\eps)^\frac{4}{N-2}) \geq\\ & \nu(1-\nu)\frac{\vert x_\eps -y_\eps \vert^{2} }{\Phi_{\eps, \eta} (y_\eps)} \left(  \alpha_\eps r_\eps^{(N-2)\nu} \frac{\vert \nabla \mathcal{G}(x_\eps,y_\eps)\vert^2}{\mathcal{G}(x_\eps,y_\eps)^2} \mathcal{G}(x_\eps,y_\eps)^{\nu} \right. \\
&  +\left. \mu_\eps^{\frac{N-2}{2}(1-2\nu)} \frac{\vert \nabla \mathcal{G}(x_\eps,y_\eps)\vert^2}{\mathcal{G}(x_\eps,y_\eps)^2} \mathcal{G}(x_\eps,y_\eps)^{1-\nu}\right)\hskip.1cm .
\end{align*}

Thanks to (\ref{eq1.17}), the left-hand side goes to $0$ as $\eps \rightarrow 0$. Then, thanks to  (\ref{eq1.20}), we get that 
\be
o(1)\geq (N-2)^2\nu(1-\nu) + o(1)
\ee
which is a contradiction, and shows that this last case can not occur. This ends the proof of (\ref{eq1.15}).

\medskip We now claim that there exists $C >0$, independent of $\eps$, such that 
\beq
\label{eq1.21}
\ue(x) \leq C \left( \mu_{\eps}^{\frac{N-2}{2}} \vert x-x_\eps \vert^{2-N} +  \alpha_\eps \right) \hbox{ in }  B(x_\eps, r_\eps)\hskip.1cm.
\eeq
Thanks to Claim \ref{estim1} and (\ref{eq1.16}), this holds for all sequences $y_\eps \in B(x_\eps, r_\eps)\setminus\{ x_\eps \} $ such that $\vert y_\eps -x_\eps\vert=\mathcal O(\mu_\eps)$ or $\frac{\vert y_\eps -x_\eps\vert}{r_\eps} \not\rightarrow 0$. Thus we may assume from now that
\be
\frac{\vert y_\eps -x_\eps\vert}{\mu_\eps} \rightarrow +\infty \hbox{ and }\frac{\vert y_\eps -x_\eps\vert}{r_\eps} \rightarrow 0 \hbox{ as } \eps \rightarrow 0 \hskip.1cm.
\ee

Let us consider $\mathcal{G}_\epsilon$ the Green function of the operator $-\Delta +h_\eps$. This function exists since, by Appendix \ref{Appcoer}, the operator is coercive, moreover it follows the following classical estimate, see \cite{aubin} or the nice notes \cite{robert},
\begin{equation}
\label{green}
\sup_{x\not= y} \vert x-y \vert^{n-2} \vert\mathcal{G}_\eps (x,y) \vert +  \vert x-y\vert^{n-1} \vert \nabla \mathcal{G}_\eps (x,y) \vert =O(1).
\end{equation}

Thanks to the Green representation formula, we have 
\begincal 
\ue(y_\eps)&=& \int_{B(x_\eps, r_\eps)} \mathcal{G}_\eps(y_\eps, \,.\,) (-\Delta u_\eps +h_\eps u_\eps) \,\diff x \\ 
&+& \mathcal O\left(r_\eps^{-(N-2)}\int_{\partial B(x_\eps, r_\eps)} \left\vert {\partial_\nu} \ue\right\vert \,d\sigma + r_\eps^{-(N-1)}\int_{\partial B(x_\eps, r_\eps)} u_\eps \,d\sigma\right)\, . 
\fincal
This gives with (\ref{eq1.15bis}), (\ref{eq1.16}) and \eqref{green} that 
\beq
\label{eq1.22}
\ue(y_\eps)= \mathcal O\left(\int_{B(x_\eps, r_\eps)} \vert x-y_\eps \vert^{-(N-2)} \ue^\frac{N+2}{N-2} \diff x\right)   + \mathcal O \left( \alpha_\eps\right) \hskip.1cm.
\eeq

Using (\ref{eq1.15}) with $\nu=\frac{1}{N+2}$, and $1<p<\frac{N}{N-2}$ we can write that 
\be
\begin{split}
&\int_{B(x_\eps, r_\eps)} \vert x-y_\eps \vert^{2-N} \ue^\frac{N+2}{N-2} \diff x \\
&\quad=\int_{B(x_\eps, \mu_\eps)} \frac{\ue^\frac{N+2}{N-2}}{\vert x-y_\eps \vert^{N-2}}  dx +\int_{B(x_\eps, r_\eps)\setminus B(x_\eps, \mu_\eps)} \frac{\ue^\frac{N+2}{N-2}}{\vert x-y_\eps \vert^{N-2}}  dx\\  
& \quad= \mathcal O\left(\mu_{\eps}^{\frac{N-2}{2}} \vert y_\eps-x_\eps \vert^{2-N} \right)  +  \alpha_\eps^\frac{N+2}{N-2} r_\eps  \int_{B(x_\eps, r_\eps)\setminus B(x_\eps, \mu_\eps)} \frac{1}{\vert x-y_\eps \vert^{N-2}}  \frac{1}{\vert x-x_\eps \vert}\diff x\\
&\qquad+ \mu_{\eps}^{\frac{N}{2}}\int_{B(x_\eps, r_\eps)\setminus B(x_\eps, \mu_\eps)} \frac{1}{\vert x-y_\eps \vert^{N-2}}  \frac{1}{\vert x-x_\eps \vert^{N+1}}dx  \\
& \quad= \mathcal O\left(\mu_{\eps}^{\frac{N-2}{2}} \vert y_\eps-x_\eps \vert^{2-N} \right)  \\
&\qquad+  \alpha_\eps^\frac{N+2}{N-2} r_\eps  \left(\int_{B(x_\eps, r_\eps)\setminus B(x_\eps, \mu_\eps)} \frac{1}{\vert x-y_\eps \vert^{p(N-2)}}   \diff x\right)^\frac{1}{p} \left(\int_{B(x_\eps, r_\eps)\setminus B(x_\eps, \mu_\eps)} \frac{1}{\vert x-x_\eps \vert^{p'}}  \diff x\right)^\frac{1}{p'} \\
&\qquad + \mathcal O\left(   \frac{\mu_{\eps}^{\frac{N}{2}}}{\vert y_\eps-x_\eps \vert^{N+1}}\int_{(B(x_\eps, r_\eps)\setminus B(x_\eps, \mu_\eps))\cap B(y_\eps, \frac{\vert x_\eps -y_\eps\vert}{2})} \frac{1}{\vert x-y_\eps \vert^{N-2}}  \diff x \right)\\  
&\qquad + \mathcal O\left(\frac{\mu_{\eps}^{\frac{N}{2}}}{\vert x_\eps-y_\eps \vert^{N-2}} \int_{(B(x_\eps, r_\eps)\setminus B(x_\eps, \mu_\eps))\setminus B(y_\eps, \frac{\vert x_\eps -y_\eps\vert}{2})}  \frac{1}{\vert x-x_\eps \vert^{N+1}}\diff x\right) \\
& \quad = \mathcal O\left(\mu_{\eps}^{\frac{N-2}{2}} \vert y_\eps-x_\eps \vert^{2-N} \right) + \mathcal O\left( \alpha_\eps^\frac{N+2}{N-2} r_\eps^{2}\right)\hskip.1cm.
\end{split}
\ee
 Thanks to (\ref{eq1.14}) and to (\ref{eq1.17}), this leads to 
 \be
\int_{B(x_\eps, r_\eps)} \vert x-y_\eps \vert^{2-N} \vert -\Delta  \ue \vert \diff x= \mathcal O(\mu_{\eps}^{\frac{N-2}{2}} \vert y_\eps-x_\eps \vert^{2-N} + \alpha_\eps).
\ee
which, thanks to (\ref{eq1.22}), proves (\ref{eq1.21}). 

In order to end the proof of the first part of Claim \ref{estim2}, we just have to prove that
\beq
\label{eq1.23}
\alpha_\eps =\sup_{ \partial B(x_\eps, r_\eps)} \ue = \mathcal O \left( \mu_{\eps}^{\frac{N-2}{2}} r_\eps^{2-N}  \right)\hskip.1cm.
 \eeq
For that purpose, we use the definition of $r_\eps$ to write that 
\be 
\psi_\eps(\beta r_\eps) \geq  \psi_\eps( r_\eps) 
\ee
for all $0<\beta <1$. Using (\ref{eq1.16}), this leads to
\be 
r_\eps^\frac{N-2}{2} \left(  \sup_{ \partial B(x_\eps, r_\eps)} \ue \right) \leq C (\beta r_\eps)^\frac{N-2}{2} \left(  \sup_{ \partial B(x_\eps, \beta r_\eps)} \ue \right) \hskip.1cm.
\ee 
Thanks to (\ref{eq1.21}), we obtain that 
\be 
  \sup_{ \partial B(x_\eps, r_\eps)} \ue  \leq C \beta ^\frac{N-2}{2} \left(\mu_{\eps}^\frac{N-2}{2} (\beta r_\eps)^{2-N}  +\sup_{ \partial B(x_\eps,  r_\eps)} \ue \right) \hskip.1cm.
\ee 
Choosing $\beta$ small enough clearly gives (\ref{eq1.23}) and thus the pointwise estimate on $\ue$ of Claim \ref{estim2}. The estimate on $\nabla \ue$ then follows from standard elliptic theory.\hfill $\square$

\medskip We now prove the following:

\begin{claim}\label{estim3}
If $r_\eps \rightarrow 0$ as $\eps\rightarrow 0$, then, up to passing to a subsequence,
\be
r_\eps^{N-2} u_\eps(x_\eps) u_\eps(x_\eps+r_\eps x) \rightarrow \frac{1}{ \vert x\vert^{N-2}}+ b \hbox{ in } C_{loc}^1\left(B\left(0, 2\right)\setminus\{0\}\right)\hbox{ as } \eps \rightarrow 0
\ee
where $b$ is some harmonic function in $B(0,2)$. Moreover, if $r_\eps < \rho_\eps$, then 
$b(0)=1$.
\end{claim}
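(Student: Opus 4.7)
The plan is to study the rescaled function
\[
v_\eps(x) := r_\eps^{N-2} u_\eps(x_\eps) u_\eps(x_\eps + r_\eps x) = \left(\frac{r_\eps}{\mu_\eps}\right)^{N-2} B_\eps\!\left(\frac{r_\eps x}{\mu_\eps}\right), \qquad x \in B(0, 2),
\]
where $B_\eps(y) := \mu_\eps^{(N-2)/2} u_\eps(x_\eps + \mu_\eps y)$ converges to the bubble $B$ by Claim \ref{estim1}. Claim \ref{estim2} yields the uniform bounds $v_\eps(x) \leq C|x|^{2-N}$ and $|\nabla v_\eps(x)| \leq C|x|^{1-N}$ on $B(0,2) \setminus \{0\}$. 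A direct calculation from \eqref{eq1.1} gives
\[
-\Delta v_\eps + r_\eps^2 h_\eps(x_\eps + r_\eps \,\cdot\,) v_\eps = N(N-2) \left(\frac{\mu_\eps}{r_\eps}\right)^2 v_\eps^{(N+2)/(N-2)} =: F_\eps.
\]
Using the pointwise bound on $v_\eps$, $|F_\eps(x)| \leq C(\mu_\eps/r_\eps)^2 |x|^{-(N+2)} \to 0$ uniformly on compacts of $B(0, 2)\setminus\{0\}$, since $r_\eps/\mu_\eps \to \infty$ by \eqref{eq1.14}; likewise the potential term is $\mathcal O(r_\eps^2|x|^{2-N}) \to 0$ uniformly on such compacts.

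By standard elliptic regularity, after extracting a subsequence, $v_\eps \to v_\infty$ in $C^1_\loc(B(0,2)\setminus\{0\})$, where $v_\infty$ is harmonic on the punctured ball and bounded by $C|x|^{2-N}$. B\^ocher's theorem for isolated singularities of harmonic functions then provides the decomposition $v_\infty(x) = c|x|^{2-N} + b(x)$ with $c \geq 0$ constant and $b$ harmonic on $B(0, 2)$.

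To identify $c = 1$, I would integrate the equation for $v_\eps$ over $B(0, s)$ for fixed $s \in (0, 2)$ to obtain
\[
\int_{B(0, s)} F_\eps \diff x - \int_{B(0, s)} r_\eps^2 h_\eps v_\eps \diff x = - \int_{\partial B(0, s)} \partial_\nu v_\eps \diff\sigma.
\]
The change of variables $y = r_\eps x/\mu_\eps$ transforms the first integral into $N(N-2) \int_{B(0, sr_\eps/\mu_\eps)} B_\eps^{(N+2)/(N-2)} \diff y$; Claim \ref{estim1} on compacts together with the uniform tail bound $B_\eps(y)^{(N+2)/(N-2)} \leq C|y|^{-(N+2)}$ (which is integrable at infinity by the Claim \ref{estim2} estimate) give the convergence to $N(N-2) \int_{\R^N} B^{(N+2)/(N-2)} \diff y = (N-2) \omega_{N-1}$, via the elementary identity $\int_{\R^N}(1+|y|^2)^{-(N+2)/2}\diff y = \omega_{N-1}/N$. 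The potential term is $\mathcal O(r_\eps^2)\to 0$. The right-hand side converges, by $C^1$-convergence and the harmonicity of $b$, to $c(N-2)\omega_{N-1}$. Equating yields $c = 1$.

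Finally, if $r_\eps < \rho_\eps$, the maximality in the definition of $r_\eps$ forces $\psi_\eps'(r_\eps) = 0$. The rescaling
\[
\tilde\psi_\eps(s) := \left(\frac{r_\eps}{\mu_\eps}\right)^{(N-2)/2} \psi_\eps(s r_\eps) = \frac{s^{(N-2)/2}}{\omega_{N-1} s^{N-1}} \int_{\partial B(0, s)} v_\eps \diff\sigma
\]
thus satisfies $\tilde\psi_\eps'(1) = 0$. Passing to the limit using the mean-value property for the harmonic function $b$ gives $\tilde\psi_\infty(s) = s^{-(N-2)/2} + b(0) s^{(N-2)/2}$, whence $0 = \tilde\psi_\infty'(1) = \frac{N-2}{2}(b(0) - 1)$, i.e.\ $b(0) = 1$. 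The main technical obstacle lies in identifying $c$, which requires coupling the bubble scale $\mu_\eps$ to the fixed scale $s = O(1)$; the divergence-theorem argument does this by matching the integrated source mass of the concentrating nonlinearity to the flux of the singular part of $v_\infty$.
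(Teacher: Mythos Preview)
Your proof is correct and follows essentially the same route as the paper: rescale to $v_\eps$, use Claim \ref{estim2} for the uniform bound, pass to a harmonic limit on the punctured ball, apply B\^ocher's theorem, identify the coefficient of $|x|^{2-N}$ by integrating the equation over a ball and matching the mass of the nonlinearity against the boundary flux, and finally read off $b(0)=1$ from $\psi_\eps'(r_\eps)=0$ via the rescaled spherical average. The only cosmetic differences are your explicit rewriting $v_\eps=(r_\eps/\mu_\eps)^{N-2}B_\eps(r_\eps x/\mu_\eps)$ and your use of a generic radius $s$ rather than $s=1$.
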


\medskip {\bf Proof of Claim \ref{estim3}.} We set, for $x \in B(0,2)$,
\be
\tue (x)= \mu_\eps^{\frac{2-N}{2}}  r_\eps^{N-2} u_\eps(x_\eps + r_\eps x) 
\ee
which verifies
\beq
\label{eq1.24}
-\Delta \tue +r_\eps^2 \tilde{h}_\eps \tue =N(N-2)\left( \frac{\mu_\eps}{r_\eps}\right)^2 \tue ^\frac{N+2}{N-2} \hbox{ in }  B(0,2) 
\eeq
 where $\tilde{h}_\eps= h(x_\eps + r_\eps x)$. Thanks to Claim \ref{estim2}, there exists $C>0$ such that
 \beq
 \label{eq1.25}
 \tue(x) \leq   \frac{C}{\vert x\vert^{N-2}} \hbox{ in } B(0,2)\setminus \{0\}) .
\eeq
Then, thanks to standard elliptic theory, we get that, after passing to a subsequence, $\tue\rightarrow U$ in $C^1_{loc}\left(B(0,2)\setminus \left\{0\right\}\right)$ as $\eps \rightarrow 0$ where $U$ is a non-negative solution of
\be
-\Delta U = 0 \hbox{ in } B(0,2) \setminus \{0\} \hskip.1cm.
\ee
Then, thanks to the B\^{o}cher theorem on singularities of harmonic functions, we get that  
\be
U(x)=\frac{\lambda}{\vert x\vert^{N-2}} +b(x)
\ee
where $b$ is some harmonic function in $B(0,2)$ and $\lambda\ge 0$. Now, integrating (\ref{eq1.24}) on $B\left(0,1\right)$, we get that 
\be
\int_{\partial B(0,1)}  \partial_\nu \tue d\sigma =\int_{B(0,1)}  \left(r_\eps^2 \tilde{h}_\eps \tue -N(N-2)\left( \frac{\mu_\eps}{r_\eps}\right)^2 \tue^\frac{N+2}{N-2} \right)dx
\ee
Thanks to \eqref{eq1.25}, and since $r_\eps \to 0$ by hypothesis,
\be
\int_{B(0,1)}  r_\eps^2 \tilde{h}_\eps \tue dx \rightarrow 0 \quad \hbox{ as } \, \eps \rightarrow 0 
\ee
and, thanks to \eqref{eq1.25} and Claim \ref{estim1}, 
\be
N(N-2)\int_{B(0,1)}  \left( \frac{\mu_\eps}{r_\eps}\right)^2 \tue^\frac{N+2}{N-2} dx \rightarrow N(N-2) \int_{\R^N} B^\frac{N+2}{N-2}  dx = (N-2) \omega_{N-1} \hbox{ as } \eps \rightarrow 0\hskip.1cm.
\ee
On the other hand, we have that 
\be
\int_{\partial B(0,1)}  \partial_\nu \tue d\sigma \rightarrow (2-N) \omega_{N-1} \lambda \hbox{ as } \eps \rightarrow 0\hskip.1cm.
\ee
We deduce that $\lambda=1$, which proves the first part of Claim \ref{estim3}.

\medskip Now, if $r_\eps <\rho_\eps$, we have thanks to the definition of $r_\eps$ that
\be
\psi_\eps'(r_\eps) =0\hskip.1cm.
\ee
Setting  $\tilde{\psi}_\eps(r) = \left( \frac{r_\eps}{\mu_\eps}\right)^{\frac{N-2}{2}} \psi_\eps(r_\eps r)$ for $0<r<2$, we see that
\be
\tilde{\psi}_\eps(r) \rightarrow  \frac{r^\frac{N-2}{2}}{\omega_{N-1} r^{N-1}} \int_{\partial B(0,r)} U d\sigma =r^{-\frac{N-2}{2}} + r^{\frac{N-2}{2}} b(0)\hskip.1cm. 
\ee
We deduce that $b(0)=1$, which ends the proof of Claim \ref{estim3}.\hfill $\square$

\medskip We prove at last the following:

\begin{claim}
\label{estim4}
Using the notations of Claim \ref{estim3}, we have that $b(0)\leq 0$ and $\nabla b (0) =0$.
\end{claim}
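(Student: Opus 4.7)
My plan is to establish both assertions via Pohozaev-type identities for the rescaled function $\tilde u_\eps(x) := r_\eps^{N-2} u_\eps(x_\eps) u_\eps(x_\eps + r_\eps x)$ on the ball $B(0,\delta_0)$, letting $\eps\to 0$ first and then $\delta_0\to 0$. The essential input is Claim \ref{estim3}: since $\tilde u_\eps \to U := |\cdot|^{2-N} + b$ in $C^1_{\loc}(B(0,2)\setminus\{0\})$, convergence is uniform in $C^1(\partial B(0,\delta_0))$, so that the boundary integrals of bilinear Pohozaev quantities may be evaluated in the limit by expanding $U$ and using the harmonicity of $b$ (mean value property of $\partial_j b$ on spheres centered at $0$, together with the divergence-theorem identity $\int_{\partial B(0,\delta_0)} x_j\partial_\nu b\, d\sigma = \int_{B(0,\delta_0)} \partial_j b\,dx$).

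To prove $\nabla b(0)=0$, I will multiply the rescaled equation $-\Delta\tilde u_\eps + r_\eps^2\tilde h_\eps\tilde u_\eps = N(N-2)(\mu_\eps/r_\eps)^2\tilde u_\eps^{(N+2)/(N-2)}$ by $\partial_j\tilde u_\eps$ and integrate on $B(0,\delta_0)$. The boundary terms carrying the prefactors $r_\eps^2$ and $(\mu_\eps/r_\eps)^2$ vanish as $\eps\to 0$, whereas the principal terms $\int_{\partial B(0,\delta_0)}[\tfrac12|\nabla\tilde u_\eps|^2\nu_j - \partial_\nu\tilde u_\eps\,\partial_j\tilde u_\eps]\, d\sigma$ converge to the analogous integrals of $U$, which (after the cancellations above) evaluate to $(N-2)\omega_{N-1}\partial_j b(0) + O(\delta_0)$. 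The volume remainder equals $\tfrac{r_\eps^3}{2}\int \tilde u_\eps^2\,(\partial_j h_\eps)(x_\eps+r_\eps\cdot)\,dx$ and is bounded by $C\eps r_\eps^3 \int_{B(0,\delta_0)}\tilde u_\eps^2$. The core technical estimate, which follows from Claim \ref{estim2} by splitting into the bubble region $\{|x|\lesssim \mu_\eps/r_\eps\}$ and the outer region where $\tilde u_\eps \lesssim |x|^{2-N}$, reads $\int_{B(0,\delta_0)}\tilde u_\eps^2 \,dx = O((r_\eps/\mu_\eps)^{N-4})$ for $N\geq 5$ and $O(\ln(r_\eps/\mu_\eps))$ for $N=4$. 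Combined with Lemma \ref{lemma eps lesssim mu} ($\eps\lesssim\mu_\eps^{N-4}$ if $N\geq 5$, $\eps\lesssim 1/\ln\mu_\eps^{-1}$ if $N=4$), this forces the remainder to be $o_\eps(1)$, so sending $\eps\to 0$ and then $\delta_0\to 0$ yields $\partial_j b(0)=0$ for every $j$.

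To prove $b(0)\leq 0$, I will use instead the classical Pohozaev multiplier $x\cdot\nabla\tilde u_\eps + \tfrac{N-2}{2}\tilde u_\eps$: the critical nonlinear interior term cancels by the choice of exponent, the singular contributions of $|x|^{2-N}$ on the outer boundary cancel among the three Pohozaev pieces, and the limiting outer contribution reduces to $\tfrac{(N-2)^2}{2}\omega_{N-1}b(0) + O(\delta_0)$. The surviving bulk equals $r_\eps^2\int_{B(0,\delta_0)}\tilde h_\eps\tilde u_\eps^2 + \tfrac{r_\eps^2}{2}\int \tilde u_\eps^2\,x\cdot\nabla\tilde h_\eps$: the second summand is $o_\eps(1)$ by the volume estimate above, and the first summand $= \eps r_\eps^2 \int V(x_\eps+r_\eps\cdot)\tilde u_\eps^2\,dx$ is non-positive by the standing hypothesis $V<0$. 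Therefore $\tfrac{(N-2)^2}{2}\omega_{N-1} b(0) + O(\delta_0)\leq o_\eps(1)$, and taking $\eps\to 0$ then $\delta_0\to 0$ gives $b(0)\leq 0$. The principal obstacle, and precisely the point where the analysis for $N\geq 4$ must go beyond the $N=3$ treatment of \cite{Druet2010}, is controlling $\int\tilde u_\eps^2$, which is unbounded in $\eps$ for $N\geq 4$; the sharp relation between $\eps$ and $\mu_\eps$ furnished by Lemma \ref{lemma eps lesssim mu}, itself a consequence of a Pohozaev identity on the original scale, is exactly what makes the $\eps$-perturbative volume contributions negligible.
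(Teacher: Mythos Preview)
Your overall strategy—apply the two Poho\v{z}aev identities to the rescaled function, evaluate the boundary integrals via Claim~\ref{estim3}, and exploit $V<0$ for the sign—is exactly the paper's approach, and the boundary computations and the estimate $\int_{B(0,\delta_0)}\tilde u_\eps^2=O((r_\eps/\mu_\eps)^{N-4})$ (from Claims~\ref{estim1}--\ref{estim2}) are correct.

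The gap is a logical circularity. You invoke Lemma~\ref{lemma eps lesssim mu} to show that the remainder volume terms $\eps r_\eps^3\int\tilde u_\eps^2$ and $\tfrac{r_\eps^2}{2}\int\tilde u_\eps^2\,x\!\cdot\!\nabla\tilde h_\eps$ are $o_\eps(1)$. But Lemma~\ref{lemma eps lesssim mu} is proved using Proposition~\ref{proposition preliminaries multi} (it applies Poho\v{z}aev on the balls $\mathsf b_{i,\eps}$ and uses parts~(iv) and~(v) of that proposition), and Proposition~\ref{proposition preliminaries multi} is precisely Proposition~\ref{proposition preliminaries multi bis} of Appendix~B, whose proof requires Proposition~\ref{estim} and hence Claim~\ref{estim4} itself (it is what rules out $r_\eps<\rho_\eps$ via the contradiction $b(0)=1$ versus $b(0)\le 0$, and it supplies $\nabla b(0)=0$ in Case~2 of Claim~\ref{iso4}). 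At this point in the argument you therefore cannot appeal to Lemma~\ref{lemma eps lesssim mu}.

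The paper avoids this by reversing the order and extracting the needed control from the first Poho\v{z}aev identity itself. Using the finer estimate $\int_{B(0,1)}|x|\tilde u_\eps^2=O((r_\eps/\mu_\eps)^{N-5})$ (for $N\ge 5$; similarly $O(1)$ for $N=4$), the gradient-of-$h$ remainder is $o\big(\eps r_\eps^2\int\tilde u_\eps^2\big)$, i.e.\ negligible \emph{relative} to the principal bulk term rather than absolutely. Since the boundary side converges (Claim~\ref{estim3}), Poho\v{z}aev then forces
\[
\lim_{\eps\to 0}\Big(-\eps r_\eps^2 V(x_\eps)\int_{B(0,1)}\tilde u_\eps^2\Big)=-\tfrac{(N-2)^2}{2}\omega_{N-1}\,b(0),
\]
which simultaneously yields $b(0)\le 0$ (since the left side is nonnegative) and the boundedness of $\eps r_\eps^2\int\tilde u_\eps^2$. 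The latter—call it \eqref{estsign}—is then what makes the bulk term in the second identity vanish, since that term is $O\big(r_\eps\cdot\eps r_\eps^2\int\tilde u_\eps^2\big)=o(1)$. If you reorganize your argument this way, relying only on Claims~\ref{estim1}--\ref{estim3}, it becomes complete; as written, the dependence on Lemma~\ref{lemma eps lesssim mu} is not permissible.
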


\medskip {\bf Proof of Claim \ref{estim4}.} We use the notation of the proof of  Claim \ref{estim3}.
Let us apply the Poho\v{z}aev identity (\ref{pohozaev1}) from Appendix \ref{Pohozaevidentities} to $\tue$ in $B(0,1)$. We obtain that 
\be
 \frac{1}{2} \int_{B(0,1)} r_\eps^2 \left( (N-2) \tilde{h}_\eps \tue^2 + \tilde{h}_\eps \langle x ,\nabla \tue^2  \rangle \right) dx = 
 \tilde{B}^\eps_1 +\tilde{B}^\eps_2
 \ee
 where 
 \be
 \begin{split}
 \tilde{B}^\eps_1=&  \int_{\partial B(0,1)} \left(\partial_\nu \tue \right)^2 +\frac{N-2}{2} \tue \partial_\nu \tue  - \frac{\vert \nabla \tue\vert^2}{2}  d\sigma \quad  \hbox{ and }\\
\tilde{B}^\eps_2 =& \frac{(N-2)^2}{2} \int_{\partial B(0,1)}  \left(\frac{\mu_\eps}{r_\eps} \right)^2  \tue^{2^*}  d\sigma \hskip.1cm.
 \end{split}
 \ee 
Thanks to Claim \ref{estim3}, we can pass to the limit to obtain that the right hand side is equal to 
 \be
  \int_{\partial B(0,1)} \left(\partial_\nu U \right)^2 + \frac{N-2}{2} U\partial_\nu U - \frac{\vert \nabla U\vert^2}{2}  d\sigma  \hskip.1cm.
 \ee
Since $b$ is harmonic, it is easily checked that it is just $-\frac{(N-2)^2\omega_{N-1} b(0)}{2}$. Moreover, when $N=3$, thanks to \eqref{eq1.25} and the dominated convergence theorem,  the  left side goes to zero,  which proves that $b(0)=0$. If $N\geq 4$, we have to make a more precise expansion of the left hand side. First integrating by parts we get
\begin{align*}
&\frac{1}{2} \int_{B(0,1)} r_\eps^2 \left( (N-2) \tilde{h}_\eps \tue^2 + \tilde{h}_\eps \langle x ,\nabla \tue^2 \rangle \right) dx \\
&= -\int_{B(0,1)} r_\eps^2 \left( \tilde{h}_\eps \tue^2 +\frac{1}{2} \tilde{u}_\eps^2 \langle x ,\nabla \tilde{h}_\eps \rangle\right) dx +o(1)\\
&=-\tilde{h}_\eps(0)r_\eps^2 \int_{B(0,1)} \tue^2 dx -r_\eps^2 \int_{B(0,1)} (\tilde{h}_\eps-\tilde{h}_\eps(0))\tue^2 + \frac{1}{2}\tilde{u}_\eps^2  \langle x ,\nabla \tilde{h}_\eps \rangle dx +o(1)\\
&= \eps r_\eps^2\left(-V(x_\eps) \int_{B(0,1)} \tue^2 dx + \mathcal O\left(  r_\eps \int_{B(0,1)}   \vert x\vert \tue^2  dx \right)\right)+o(1)
\end{align*}

Then, thanks to Claim \ref{estim1} and Claim \ref{estim2}, we have easily for $N\geq 5$ that 
\begin{align}
\int_{B(0,1)} \tue^2 \,  dx =\left( \frac{r_\eps}{\mu_\eps} \right)^{N-4} \left(\int_{\R^N} B^2 \diff x + o(1) \right)
\end{align}
and
\begin{align}
\int_{B(0,1)}   \vert x\vert \tue^2 \diff x  = \mathcal O\left( \left( \frac{r_\eps}{\mu_\eps} \right)^{N-4-1} \right).
\end{align}
In particular
\begin{equation}
\label{estsign}
\lim_{\eps \rightarrow 0 } -\eps r^{2}_\eps V(x_\eps) \int_{B(0,1)} \tue^2 dx = -\frac{(N-2)^2\omega_{N-1} b(0)}{2}.
\end{equation}
Hence, using the fact that $V<0$, we obtain that $b(0)\leq 0$ for $N\geq 5$.
Similarly, for $N=4$,
\begin{align}
\int_{B(0,1)} \tue^2 dx =(1+o(1))\log \left( \frac{r_\eps}{\mu_\eps} \right)
\end{align}
and
\begin{align}
\int_{B(0,1)}  \vert x\vert \tue^2 \diff x  = \mathcal O(1),
\end{align}
which also proves that $b(0)\leq 0$. 
In order to prove the second part of Claim \ref{estim4}, we apply the Poho\v{z}aev identity (\ref{pohozaev4}) of Appendix \ref{Pohozaevidentities} to $\tue$ in $B(0,1)$. We obtain that 
\beq
\label{eq1.26}
\begin{split}
&\qquad \int_{\partial B(0,1)} \left( \frac{\vert \nabla \tue \vert^2}{2} \nu  -  \partial_\nu \tue \nabla \tue \right) \,d\sigma \\
&\quad = - \int_{B(0,1)} r_\eps^2 \tilde{h}_\eps \frac{\nabla \tue^2 }{2} \diff x+   \int_{\partial B(0,1)} \frac{(N-2)^2}{2}\left(\frac{\mu_\eps}{r_\eps} \right)^2  \tue^{2^*} \nu \, d\sigma \hskip.1cm.
\end{split}
\eeq 
It is clear that
\be
\int_{\partial B(0,1)} \left( \frac{\vert \nabla \tue \vert^2}{2} \nu -\partial_\nu \tue \nabla \tue  \right) d\sigma  \rightarrow \int_{\partial B(0,1)} \left(\frac{\vert \nabla U \vert^2}{2} \nu - \partial_\nu U \nabla U   \right) d\sigma \quad  \hbox{ as } \eps\rightarrow 0\hskip.1cm.
\ee
Moreover, thanks to the fact that $b$ is harmonic, we easily get that
\be
\int_{\partial B(0,1)} \left( \frac{\vert \nabla U \vert^2}{2} \nu - \nabla U \partial_\nu U\right)\, d\sigma = (N-2) \omega_{N-1} \nabla b(0)\hskip.1cm.
\ee
It remains to deal with the right-hand side of (\ref{eq1.26}). It is clear that
\be
\int_{\partial B(0,1)} \left(\frac{\mu_\eps}{r_\eps} \right)^2 \tue^{2^*}\nu d\sigma \rightarrow 0 \quad  \hbox{ as } \eps \rightarrow 0\hskip.1cm.
\ee 
Then we rewrite the first term of the right-hand side of (\ref{eq1.26}) as
\begin{align*}
 \int_{B(0,1)} r_\eps^2 \tilde{h}_\eps \frac{\nabla \tue^2 }{2} \diff x &= - \int_{ B(0,1)} r_\eps^2  \frac{\nabla \tilde{h}_\eps }{2}\tue^2 \diff x +o(1) = \mathcal O\left( \eps r_\eps^{3} \int_{ B(0,1)} \tue^2  \diff x \right)\, .
\end{align*}
Then, thanks to \eqref{estsign}, we have
$$
\lim_{\eps \rightarrow 0 }  \int_{B(0,1)} r_\eps^2 \tilde{h}_\eps \frac{\nabla \tue^2 }{2} \diff x =0  \, .
$$
Finally collecting the above informations, and passing to the limit $\eps\to 0$ in (\ref{eq1.26}), we get that $\nabla b(0)=0$, which achieves the proof of Claim \ref{estim4}.
\hfill$\square$

\medskip We are now in a position to end the proof of Proposition \ref{estim}. 

\begin{proof} 
[Proof of Proposition \ref{estim}]
If $\rho_\eps \rightarrow 0$ as $\eps\rightarrow 0$, then we deduce the proposition from Claims \ref{estim3} and \ref{estim4}. If $\rho_\eps\not\to 0$ as $\eps\to 0$, then claims \ref{estim3} and \ref{estim4} give that $r_\eps\not \to 0$ as $\eps\to 0$. Then, using the Harnack inequality (\ref{eq1.16}), one can extend the result of Claim \ref{estim2} to $B(x_\eps,2\rho_\eps)\setminus\{x_\eps\}$, which proves the first part of Proposition \ref{estim} when $\rho_{\eps} \not\rightarrow 0$.
\end{proof}

\subsection{Proof of Proposition \ref{proposition preliminaries multi bis}}

Let us now turn to the proof of Proposition \ref{proposition preliminaries multi bis}. This is done in two steps.
In Claim \ref{iso1}, mimicking \cite{DruetHebey09}, we exhaust  a family of critical points of $\ue$, $\left(x_{1,\eps},\dots, x_{N_\eps,\eps}\right)$, such that each sequence $\left(x_{i_\eps, \eps}\right)$ satisfies the assumptions  of Proposition  \ref{estim} with
$$\rho_\eps= \min_{1\leq i\leq N_{\eps}, i\not=i_\eps} \left\{ \vert x_{i,\eps} -x_{i_\eps, \eps} \vert, d(x_{i_\eps, \eps}, \partial\Omega) \right\}\hskip.1cm.$$
In Claim \ref{iso4}, we prove that these concentration points are in fact isolated. In particular, this shows that $(\ue)$ develops only finitely many concentration points.

First of all, we extract sequences (whose number is {\it a priori} not bounded) of critical points of $u_\eps$ which are candidates to be the blow-up points.
\begin{claim}
\label{iso1} 
There exists $ D >0$ such that for all $\eps >0$, there exists $n_\eps \in {\mathbb N}^{*}$ and $N_\eps$ critical points of $\ue$, denoted by $(x_{1,\eps},\dots, x_{n_\eps,\eps})$ such that :
\be
\begin{split}
&d(x_{i,\eps},\partial\Omega ) \ue(x_{i,\eps})^\frac{2}{N-2}  \geq 1 \hbox{ for all } i\in [1,n_{\eps}]\hskip.1cm,\\
&\vert x_{i,\eps}-x_{j,\eps}\vert \ue(x_{i,\eps})^\frac{2}{N-2}  \geq 1 \hbox{ for all } i \not= j \in  [1,n_{\eps}]\hskip.1cm, 
\end{split}
\ee
and 
\be
\left( \min_{i\in [1,n_\eps ]} \vert x_{i,\eps} -x \vert  \right) \ue(x)^\frac{2}{N-2} \leq D
\ee
for all $x\in \Omega$ and all $\eps>0$.
\end{claim}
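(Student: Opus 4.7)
The plan is an iterative exhaustion procedure in the spirit of Druet--Hebey--Robert \cite{DruetHebey09}. I would take $x_{1,\eps}$ to be a global maximum point of $\ue$ on $\overline{\Omega}$: since $\ue > 0$ in $\Omega$ and $\ue = 0$ on $\partial\Omega$, this is an interior critical point, and both distance inequalities are vacuously satisfied for $k=1$. Assume inductively that critical points $x_{1,\eps},\ldots,x_{k,\eps}$ verifying both distance inequalities have been constructed, and consider
\[ f_{k,\eps}(x) := \min_{1 \leq i \leq k} |x-x_{i,\eps}| \, \ue(x)^{2/(N-2)}. \]
If $\sup_{\Omega} f_{k,\eps} \leq D$ (for a universal $D$ to be fixed below), the procedure stops and I set $n_\eps := k$. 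Otherwise, I pick $y_\eps \in \Omega$ close to a local maximum of $f_{k,\eps}$ with $f_{k,\eps}(y_\eps) > D$, and build a new critical point $x_{k+1,\eps}$ near $y_\eps$.

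The construction of $x_{k+1,\eps}$ is a direct blow-up argument analogous to Claim \ref{estim1}, but centered at the (non-critical) point $y_\eps$. Set $\mu_\eps := \ue(y_\eps)^{-2/(N-2)}$. The lower bound $|y_\eps - x_{i,\eps}| \geq D \mu_\eps$ combined with the near-maximality of $y_\eps$ for $f_{k,\eps}$ gives the pointwise estimate $|y_\eps - x|^{(N-2)/2} \ue(x) \leq C$ on $B(y_\eps, \tfrac{D}{2}\mu_\eps)$, which rescales to uniform boundedness of $\tilde{u}_\eps(z) := \mu_\eps^{(N-2)/2} \ue(y_\eps + \mu_\eps z)$ on $B(0,R)$ for $R \sim \sqrt{D}$. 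Standard elliptic regularity together with the Caffarelli--Gidas--Spruck classification then yield $\tilde{u}_\eps \to B(\cdot - z_\infty)$ in $C^1_\loc(\R^N)$ along a subsequence, for some $z_\infty \in \R^N$. Since the bubble has a nondegenerate critical point at its center, the implicit function theorem applied to $\nabla \tilde{u}_\eps$ produces critical points $\tilde{z}_\eps \to z_\infty$ of $\tilde{u}_\eps$, and I set $x_{k+1,\eps} := y_\eps + \mu_\eps \tilde{z}_\eps$. Since $\ue(x_{k+1,\eps}) \sim \ue(y_\eps) = \mu_\eps^{-(N-2)/2}$ and $|y_\eps - x_{j,\eps}| \geq D\mu_\eps$, the mutual distance condition $|x_{k+1,\eps} - x_{j,\eps}| \ue(x_{k+1,\eps})^{2/(N-2)} \geq 1$ holds once $D$ is chosen large enough to absorb the implicit constants.

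The principal difficulty is the boundary condition $d(x_{k+1,\eps},\partial\Omega) \ue(x_{k+1,\eps})^{2/(N-2)} \geq 1$, which amounts to excluding blow-up at $\partial\Omega$. If $d(y_\eps,\partial\Omega)/\mu_\eps$ remained bounded, the rescaling centered at $y_\eps$ combined with a boundary-straightening would produce a nontrivial positive solution of the critical equation on a half-space with zero Dirichlet condition, which is excluded by the Kelvin-transform / moving-plane argument of Gidas--Ni--Nirenberg. This exclusion crucially uses the hypothesis $V < 0$ (respectively the assumption on $h_0$ when $N=3$), which rules out boundary concentration. Termination of the iteration for each fixed $\eps$ is automatic from the mutual distance inequality, which enforces $|x_{i,\eps} - x_{j,\eps}| \geq \|\ue\|_\infty^{-2/(N-2)} > 0$ and hence allows only finitely many points in the bounded set $\Omega$; the essential content of the claim is the \emph{uniformity} of the constant $D$ across $\eps$, obtained by carrying out the above blow-up analysis uniformly at each inductive step.
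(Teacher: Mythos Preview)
Your iterative scheme has a structural problem: the construction of each new point $x_{k+1,\eps}$ relies on a blow-up and the Caffarelli--Gidas--Spruck classification, which is an asymptotic argument requiring $\eps\to 0$ along a subsequence. But the claim asks you to produce the family $(x_{1,\eps},\dots,x_{n_\eps,\eps})$ for \emph{each fixed} $\eps$, with a single universal $D$. Since $n_\eps$ is not known to be bounded at this stage (that is the content of Claim~\ref{iso4}, proved afterwards), you would need to pass to a further subsequence at every inductive step, and there is no mechanism ensuring termination after boundedly many such extractions. Your final sentence acknowledges the tension but does not resolve it.

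The paper avoids this by decoupling the two issues. First, for each fixed $\eps$, it selects the points by a purely combinatorial greedy procedure (Lemma~\ref{iso0}) applied to the set $K_{\ue}$ of critical points satisfying the boundary-distance condition; this yields the two separation inequalities together with the bound $\min_i|x_{i,\eps}-x|\,\ue(x)^{2/(N-2)}\le 1$ for all $x\in K_{\ue}$. Only then is a \emph{single} blow-up used, by contradiction: if the quantity were unbounded over all of $\Omega$, one rescales at the maximizer $z_\eps$, obtains a bubble in the limit, and pulls back its strict maximum to produce a critical point of $\ue$ lying in $K_{\ue}$ but violating the bound just established. Your idea of rescaling at the maximizer of $f_{k,\eps}$ is exactly this contradiction step; the difference is that in the paper the point set is already fixed before the blow-up is performed.

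Two smaller points. The half-space Liouville theorem (Dancer / Gidas--Ni--Nirenberg) excludes boundary concentration on its own; the sign condition $V<0$ plays no role here and enters only later, in Claim~\ref{estim4}. And your rescaled radius should be $R\sim D$, not $R\sim\sqrt{D}$: if $y_\eps$ maximizes $f_{k,\eps}$ with value $M\ge D$ and $\mu_\eps=\ue(y_\eps)^{-2/(N-2)}$, then on $B(y_\eps,\tfrac{M}{2}\mu_\eps)$ one has $\min_i|x-x_{i,\eps}|\ge \tfrac{M}{2}\mu_\eps$ and hence $\ue(x)\le 2^{(N-2)/2}\ue(y_\eps)$, giving uniform boundedness of the rescaling on $B(0,M/2)$.
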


\medskip {\bf Proof of Claim \ref{iso1}.} First of all, we claim that 
\beq
\label{eq2.2}
\left \{ x \in\Omega \hbox{ s.t. } \nabla \ue(x)=0 \hbox{ and } d(x,\partial\Omega) \ue(x)^\frac{2}{N-2}\geq 1 \right \}\not=\emptyset
\eeq
for $\eps$ small enough. Let us prove (\ref{eq2.2}). Let $y_\eps \in \Omega$ be a point where $\ue$ achieves its maximum. We set $\mu_\eps= \ue(y_\eps)^{-\frac{2}{N-2}}\rightarrow 0$ as $\eps\rightarrow 0$. We set also  for all $x\in\Omega_\eps= \{x\in\R^N \hbox{ s.t. } y_\eps +\mu_\eps x \in\Omega \}$,
\be
\tue(x) =\mu_\eps^\frac{N-2}{2} \ue(y_\eps +\mu_\eps x)\, , 
\ee
which verifies
 \be
 -\Delta \tue +\mu_\eps^2 \tilde{h}_\eps \tue = N(N-2)\tue^\frac{N+2}{N-2} \hbox{ in } \Omega_{\eps},
 \ee
 where $\tilde{h}_\eps=h(y_\eps + \mu_\eps x)$. Note that $0\leq \tue\leq \tue(0)= 1$. Thanks to standard elliptic theory, we get that $\tue \rightarrow U$ in $C^{1}_{loc}(\Omega_0)$ where $U$ satisfies
\be 
-\Delta U = U^\frac{N+2}{N-2} \hbox{ in } \Omega_0 \hbox{ and } 0\leq U \leq 1,
\ee
and where $\ds \Omega_0 =\lim_{\eps \rightarrow 0} \Omega_\eps $. Moreover, $U\not\equiv 0$ by Harnack's inequality, see \cite[Theorem 4.17]{HanLin}. Thanks to \cite[Theorem 2]{Dancer}, we have $\Omega_0=\R^N$, which proves that $d(y_\eps,\partial\Omega)\ue(y_\eps)^\frac{2}{N-2} \rightarrow +\infty $ as $\eps\to 0$. This ends the proof of (\ref{eq2.2}).

Now, applying Lemma \ref{iso0}, see Appendix \ref{generallemma}, for $\eps$ small enough, there exist $n_\eps \in \N^{*}$ and $n_\eps$ critical points of $\ue$, denoted by $(x_{1,\eps},\dots, x_{n_\eps,\eps})$, such that :
\be
\begin{split}
&d(x_{i,\eps},\partial\Omega ) \ue(x_{i,\eps})^\frac{2}{N-2}  \geq 1 \hbox{ for all } i\in [1,n_{\eps}]\hskip.1cm,\\
&\vert x_{i,\eps}-x_{j,\eps} \vert \ue(x_{i,\eps})^\frac{2}{n-2}  \geq 1 \hbox{ for all } i \not= j \in  [1,n_{\eps}] \hskip.1cm,
\end{split}
\ee
and 
\beq
\label{eq2.3}
\left( \min_{i\in [1,n_\eps ]} \vert x_{i,\eps} -x \vert  \right) \ue(x)^\frac{2}{N-2} \leq 1
\eeq
for every critical point $x$ of $\ue$ such that $d(x,\partial\Omega)\ue(x)^\frac{2}{N-2} \geq 1$. It remains to show that there exists $D>0$ such that 
\be
\left( \min_{i\in [1,n_\eps ]} \vert x_{i,\eps} -x \vert  \right) \ue(x)^\frac{2}{N-2} \leq D
\ee
for all $x\in \Omega$. We proceed by contradiction, assuming that 
\beq
\label{eq2.4}
\sup_{x\in\Omega} \left( \left( \min_{i\in [1,n_\eps ]} \vert x_{i,\eps} -x \vert  \right) \ue^\frac{2}{N-2}(x)\right) \rightarrow +\infty
\eeq
as $\eps \rightarrow 0$. Let $z_\eps\in \Omega$ be such that
\be
\left( \min_{i\in [1,n_\eps ]} \vert x_{i,\eps} -z_\eps \vert  \right) \ue(z_\eps)^\frac{2}{N-2} = 
\sup_{x\in\Omega} \left( \left( \min_{i\in [1,n_\eps ]} \vert x_{i,\eps} -x \vert  \right) \ue(x)^\frac{2}{N-2}\right).
\ee
We set $\hat{\mu}_\eps =\ue(z_\eps)^{-\frac{2}{N-2}}$ and $S_\eps =\{ x_{1,\eps}, \dots, x_{n_\eps,\eps} \}$. Thanks to (\ref{eq2.4}), we check that 
\be 
\hat{\mu}_\eps \rightarrow 0\hbox{ as }\eps\rightarrow 0
\ee
and that 
\beq
\label{eq2.5}
\frac{d(S_\eps, z_\eps)}{\hat{\mu}_\eps} \rightarrow +\infty \hbox{ as } \eps\rightarrow 0 \hskip.1cm.
\eeq
Then we set, for all $x\in \hat{\Omega}_{\eps}=\{ x\in \R^3 \hbox{ s.t. } z_\eps + \hat{\mu}_\eps x \in \Omega \}$, 
\be 
\hue(x) = \hat{\mu}_\eps^\frac{N-2}{2} \hue(z_\eps+\hat{\mu}_\eps x) \, , 
\ee
which verifies
\be
-\Delta \hue +\hat{\mu}_\eps^2 \hat{h}_\eps \hue = N(N-2)\hue^\frac{N+2}{N-2} \hbox{ in } \Omega_{\eps}\, ,
\ee
where $\hat{h}_\eps=h(z_\eps + \hat{\mu}_\eps x)$. Note that $\hue(0)=1$ and also that 
\be
\lim_{\eps\rightarrow 0} \sup_{B(0,R)\cap \Omega_\eps} \hue = 1
\ee
for all $R>0$ thanks to (\ref{eq2.4}) and (\ref{eq2.5}). Standard elliptic theory gives then that $\hue \rightarrow \hat{U}$ in $C^{1}_{loc}(\hat{\Omega}_0)$ where $U$ satisfies
\be 
-\Delta \hat{U} = N(N-2)\hat{U}^\frac{N+2}{N-2} \hbox{ in } \hat{\Omega}_0 \hbox{ and } 0\leq \hat{U} \leq1
\ee
with $\ds \hat{\Omega}_0=\lim_{\eps \rightarrow 0} \hat{\Omega}_\eps $. As above, we deduce that $\hat{\Omega}_0=\R^N$, which gives that 
\beq
\label{eq2.6}
\lim_{\eps\rightarrow 0} d(z_\eps, \partial\Omega) \ue^\frac{2}{N-2}(z_\eps)\rightarrow +\infty \hskip.1cm.
\eeq
Moreover, thanks to \cite{CGS}, we know that
\be
\hat{U}(x) = \frac{1} { \left(1+ \vert x \vert^{2}  \right)^{\frac{N-2}{2}} } \hskip.1cm.
\ee
Since $\hat{U}$ has a strict local maximum at $0$, there exists $\hat{x}_\eps$, a critical point of $\ue$, such that $\vert z_{\eps} - \hat{x}_\eps \vert  = o(\hat{\mu}_\eps)$ and $\hat{\mu}_\eps \ue (\hat{x}_\eps)^{2}\rightarrow 1$ as $\eps \rightarrow 0$. Thanks to (\ref{eq2.5}) and (\ref{eq2.6}), this contradicts (\ref{eq2.3}) and proves Claim \ref{iso1}.\hfill$\square$

\medskip We define
\be
d_\eps = \min \left\{ d(x_{i, \eps},x_{j, \eps}), d(x_{i, \eps},\partial\Omega) \hbox{ s.t. } 1\leq i < j \leq n_\eps \right\}
\ee
and prove: 

\begin{claim}
\label{iso4}
There exists $d>0$ such that  $\de \geq d$.
\end{claim}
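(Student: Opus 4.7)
The plan is to argue by contradiction: assume that, along a subsequence, $d_\eps \to 0$. After reindexing we may assume the infimum is realized either by $d(x_{1,\eps}, \partial\Omega)$ (boundary case) or by $|x_{1,\eps} - x_{2,\eps}|$ (pair case); in the pair case we additionally arrange $\mu_{1,\eps} \le \mu_{2,\eps}$. The natural dichotomy to treat first is whether the rescaled distance $d_\eps/\mu_{1,\eps}$ stays bounded. If it does, then rescaling $u_\eps$ around $x_{1,\eps}$ at the bubble scale $\mu_{1,\eps}$ produces, exactly as in the proof of Claim~\ref{iso1}, a limit which must be the standard Aubin--Talenti bubble $B$ on $\R^N$. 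This limit then inherits either a second critical point at the non-zero point $\lim(x_{2,\eps}-x_{1,\eps})/\mu_{1,\eps}$ (pair case) or a Dirichlet boundary condition at bounded rescaled distance (boundary case); the first contradicts the fact that $B$ has a unique critical point at the origin, while the second contradicts Dancer's non-existence result \cite{Dancer} on a half-space.

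If instead $d_\eps/\mu_{1,\eps}\to \infty$ --- and by a re-centering argument the same holds for every index $k$ whose concentration point sits at distance $O(d_\eps)$ from $x_{1,\eps}$, since otherwise centering at $x_{k,\eps}$ reduces to the previous case --- I would apply Proposition~\ref{estim} with $\rho_\eps = d_\eps/6$ around $x_{1,\eps}$. Hypothesis \eqref{eq1.4} is automatic and hypothesis \eqref{eq1.5} follows from Claim~\ref{iso1}: the ball $B(x_{1,\eps}, d_\eps/2)$ lies in the Voronoi cell of $x_{1,\eps}$, so $\min_k |x - x_{k,\eps}|$ is attained at $k=1$ throughout it, and the required bound $|x - x_{1,\eps}|^{(N-2)/2} u_\eps(x) \leq D^{(N-2)/2}$ follows. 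Proposition~\ref{estim} then produces
\[
w_\eps(y) := \rho_\eps^{N-2} u_\eps(x_{1,\eps})\, u_\eps(x_{1,\eps}+\rho_\eps y) \longrightarrow |y|^{2-N} + b(y)
\]
in $C^1_{loc}(B(0,2)\setminus\{0\})$, with $b$ harmonic on $B(0,2)$ and subject to the constraints $b(0) \leq 0$ and $\nabla b(0) = 0$.

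The endgame is to contradict these two constraints by identifying $b$ explicitly. Using the upper bound $u_\eps \lesssim \sum_k B_{k,\eps}$ on $\Omega$, which follows from Claim~\ref{iso1} together with a Green-function representation of $u_\eps$, one passes termwise to the limit in $w_\eps(y)$ on $B(0,2)\setminus\{0\}$ and identifies
\[
b(y) = \sum_{\substack{k \neq 1 \\ |z_{k,0}| < \infty}} \frac{\lambda_k^{(N-2)/2}}{|y - 6 z_{k,0}|^{N-2}} + b_\partial(y),
\]
where $z_{k,0} = \lim (x_{k,\eps}-x_{1,\eps})/d_\eps$, $\lambda_k = \lim \mu_{k,\eps}/\mu_{1,\eps} \in [1,\infty]$, and $b_\partial$ is the harmonic function obtained via the method of images from the Dirichlet condition on the rescaled boundary. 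In the pure pair case the rescaled boundary recedes to infinity, so $b_\partial \to 0$, and the $k=2$ term alone (with $|z_{2,0}| = 1$ and $\lambda_2 \geq 1$) yields $b(0) \geq 6^{-(N-2)} > 0$, contradicting $b(0) \leq 0$ (the case $\lambda_2 = \infty$ is even easier, as $w_\eps$ then blows up along the limit). In the pure boundary case the sum is empty, $b \equiv b_\partial$ is the reflection of the $|y|^{2-N}$ singularity across the rescaled boundary, and $\nabla b(0)$ becomes a non-zero multiple of the inward unit normal, contradicting $\nabla b(0) = 0$.

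The main obstacle is the rigorous justification of the formula above for $b$: one has to control the remainder $u_\eps - \sum_k B_{k,\eps}$ sharply enough on $\partial B(x_{1,\eps}, d_\eps/2)$ that the image-method computation survives in the limit inside Proposition~\ref{estim}. This step relies on Claim~\ref{iso1} and the Green-function representation of $u_\eps$, and it is the place where the refined analysis for $N\geq 4$ announced in the text enters, in the spirit of the Pohozaev-type argument already used for Claim~\ref{estim4}. Finally, the mixed case, in which both a boundary point and another concentration point sit at distance $\sim d_\eps$ from $x_{1,\eps}$, is handled by combining both ingredients: the strict positivity of the $k=2$ contribution to $b(0)$ and the non-vanishing of the image contribution to $\nabla b(0)$ prevent the two constraints on $b$ from being simultaneously satisfied.
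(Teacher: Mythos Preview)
Your overall architecture---contradiction, rescale by $d_\eps$, invoke Proposition~\ref{estim}, and derive a contradiction from the sign constraints $b(0)\le 0$, $\nabla b(0)=0$---matches the paper. The substantive gap is in how you propose to identify $b$.

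You write that the bound $u_\eps \lesssim \sum_k B_{k,\eps}$ ``follows from Claim~\ref{iso1} together with a Green-function representation,'' and then use it to compute $b$ termwise. This is circular: that global bubble upper bound is precisely conclusion (vi) of Proposition~\ref{proposition preliminaries multi bis}, and it is derived only \emph{after} Claim~\ref{iso4} is in hand (since Proposition~\ref{estim} applied with $\rho_\eps \sim d_\eps$ gives a strong estimate only on balls of radius $\sim d_\eps$, which shrink to points if $d_\eps\to 0$). Even granting the upper bound, an inequality does not let you ``pass termwise to the limit'' and pin down $b$; you would need two-sided asymptotics. Your mixed-case argument also fails as stated: a positive pair contribution to $b(0)$ and a boundary contribution to $\nabla b(0)$ can cancel against each other, so you cannot rule out $b(0)\le 0$ and $\nabla b(0)=0$ simultaneously just from their separate signs.

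The paper avoids all of this by never computing $b$ explicitly. It rescales by $d_\eps$, applies Proposition~\ref{estim} around \emph{each} nearby concentration point (first ruling out the bounded-height case via an argument like your first dichotomy), and uses elliptic theory and Harnack to obtain $\tilde u_\eps(0)\tilde u_\eps \to H = \sum_{i} \lambda_i |x-\tilde x_i|^{2-N} + \tilde b$ on all of $\Omega_0\setminus S$, where $S$ is the (possibly countable) limit set of rescaled concentration points. In the interior case $\Omega_0=\R^N$ it proves $\tilde b\ge 0$ by a maximum-principle argument on expanding balls that successively absorbs the singularities in $S$; together with $b(0)\le 0$ this forces a contradiction. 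In the half-space case it extends $H$ by odd reflection and uses the explicit Poisson kernel on large balls to show $\partial_1 \hat b(0)\le 0$, while the reflected singular terms force $\partial_1 \check b(0)<0$, contradicting $\nabla b(0)=0$. Both arguments are robust to the presence of infinitely many points in $S$ and require only the positivity and harmonicity of the pieces, not their precise coefficients.
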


\medskip {\bf Proof of Claim \ref{iso4}.} Assume that $\de \rightarrow 0 $ as $\eps \rightarrow 0 $. There are two cases to consider~: either the distance between two critical points goes to $0$, or one of them goes to the boundary.

Up to reordering the concentration points, we can assume that 
\be
\de=d(x_{1,\eps}, x_{2,\eps}) \hbox{ or } d(x_{1,\eps}, \partial\Omega) \hskip.1cm. 
\ee
For $x\in \Omega_\eps =\{x\in\R^3 \hbox{ s.t. } x_{1,\eps} + d_{\eps} x \in \Omega \}$, we set
\be
\tue (x)= \de^{\frac{N-2}{2}}  u_\eps(x_{1,\eps} + \de x) 
\ee
which verifies
\be
-\Delta \tue +\de^2 \tilde{h}_\eps \tue =N(N-2)\tue ^\frac{N+2}{N-2} \hbox{ in }  \Omega_\eps, 
\ee
 where $\tilde{h}_\eps= h(x_{1,\eps} + \de x)$. We have, up to a harmless rotation, 
\be
\lim_{\eps \rightarrow 0} \Omega_\eps =\Omega_0=   \R^N \hbox{ or } ]-\infty;d[ \times \R^{N-1} \hbox{ where } d\geq 1\hskip.1cm.
\ee
We also set
\be
\tilde{x}_{i,\eps} =\frac{x_{i,\eps} - x_{1,\eps}}{\de}\hskip.1cm.
\ee
We claim that, for any sequence $i_\eps \in [1, n_\eps]$ such that
\beq
\label{eq2.7} 
\tue(\tilde{x}_{i_\eps,\eps})= \mathcal O(1) \hskip.1cm,
\eeq
we have that
\beq 
\label{eq2.8}
\sup_{B(\tilde{x}_{i_\eps,\eps},\frac{1}{2})} \tue = \mathcal O(1) \hskip.1cm.
\eeq
Indeed, let $y_{\eps} \in\overline{B(\tilde{x}_{i_\eps,\eps},\frac{1}{2})}$ be such that $\displaystyle \sup_{B(\tilde{x}_{i_\eps,\eps},\frac{1}{2})} \tue = \tue( y_{\eps} ) $ and assume by contradiction that 
\beq
\label{2.9}
\tue( y_{\eps} )^\frac{2}{N-2} \rightarrow +\infty \hbox{ as }\eps \rightarrow 0\hskip.1cm.
\eeq
Thanks to the definitions of $\de$ and $y_\eps$ and to the last assertion of Claim \ref{iso1}, we can  write that
\be
\vert \de (y_\eps-\tilde{x}_{i_\eps,\eps}) \vert \ue(x_{1,\eps} +\de y_\eps)^\frac{2}{N-2} \leq D
\ee
so that 
\beq
\label{2.10}
\vert y_\eps - \tilde{x}_{i_\eps,\eps}\vert = o(1)\hskip.1cm.
\eeq
For $x \in B(0, \frac{1}{3 \hat{\mu}_\eps})$ and $\eps$ small enough, we set 
\be
\hue (x)= \hat{\mu}_\eps^{\frac{N-2}{2}}  \tue (y_\eps + \hat{\mu}_\eps x)\, ,
\ee
where $\hat{\mu}_\eps= \ue(y_\eps)^{-\frac{2}{N-2}}$. It satisfies
\be
-\Delta \hue +(\hat{\mu}_\eps d_\eps)^2 \hat{h}_\eps \hue =\hue ^\frac{N+2}{N-2} \hbox{ in }  B(0, \frac{1}{3 \hat{\mu}_\eps}) \quad \hbox{ and }\hue(0) = \sup_{B(0, \frac{1}{3 \hat{\mu}_\eps})} \hue =1, 
\ee
where $\hat{h}_\eps= \tilde{h}_\eps(y_\eps + \hat{\mu}_\eps x)$. Thanks to (\ref{2.9}), $B(0, \frac{1}{3 \hat{\mu}_\eps}) \rightarrow \R^N$ as $\eps \rightarrow +\infty$. Then $\left(\hue\right)$ is uniformly locally bounded and, by standard elliptic theory, $\hue$ converges to $\hat{U}$ in $C^{1}_{loc}(\R^N)$ where $\hat{U}$ satisfies
\be 
-\Delta \hat{U} = \hat{U}^\frac{N+2}{N-2} \hbox{ in } \R^N \hbox{ and } 0\leq \hat{U} \leq1=\hat{U}(0) \hskip.1cm.
\ee
Thanks to the classification of Caffarelli--Gidas--Spruck \cite{CGS} and to the fact that  $\frac{\tilde{x}_{i_\eps,\eps} - y_{\eps}}{\hat{\mu}_\eps}$ is bounded, we can write that
\be
\liminf_{\eps\to 0} \frac{\tue (x_{i_\eps,\eps})}{\tue(y_\eps)} >0
\ee
which is a contradiction with (\ref{eq2.7}) and (\ref{2.9}), and achieves the proof of (\ref{eq2.8}).

\medskip For $R>0$, we set $S_{R,\eps} = \left\{ \tilde{x}_{i,\eps} \, \vert \,  \tilde{x}_{i,\eps}\in B(0, R)\right\}$. Thanks to the definition of $\de$, up to a subsequence, $S_{R,\eps} \rightarrow S_{R}$ as $\eps \rightarrow 0$, where $S_{R}$ is a non-empty finite set, then up to performing a diagonal extraction, we can define the countable set 
\be 
S=\bigcup_{R>0} S_{R}\hskip.1cm.
\ee 
Thanks to the previous definition, we are ready to prove the following assertion~:
\beq
\label{eq2.11}
\forall\,  i_{\eps} \in [1,n_\eps ]\hbox{ s.t. } d(x_{i_\eps ,\eps}, x_{1,\eps}) = \mathcal O(\de)\, ,\qquad  \tue(\tilde{x}_{i_\eps ,\eps}) \rightarrow +\infty \, \hbox{ as }\eps\to 0\hskip.1cm.
\eeq
Assume that there exists $i_\eps$ such that $d(x_{i_\eps ,\eps}, x_{1,\eps}) = \mathcal O(\de)$ with $\tue(\tilde{x}_{i_\eps ,\eps})$ bounded, then for all sequences $j_\eps$ such that $d(x_{j_\eps ,\eps}, x_{1,\eps}) = \mathcal O(\de)$,  $\tue(\tilde{x}_{j_\eps ,\eps})$ is bounded. Indeed, if  there exists a sequence $j_\eps$ such that $d(x_{j_\eps ,\eps}, x_{1,\eps}) = \mathcal O(\de)$  and $\tue(\tilde{x}_{j_\eps ,\eps})\rightarrow +\infty $ as $\eps\rightarrow 0$, thanks to  Claim \ref{iso1}, we can apply Proposition \ref{estim} with $x_\eps = \tilde{x}_{j_\eps,\eps}$ and $\rho_{\eps} = \frac{\de}{3}$. We obtain that up to a subsequence  $\tue\rightarrow 0$ in $C^{1}_{loc}(B(\tilde{x},\frac{2}{3}))\setminus \{ \tilde{x} \}$, where $\ds \tilde{x}= \lim_{\eps \rightarrow 0} \tilde{x}_{j_\eps,\eps }$. But $\left(\tue\right)$ is uniformly bounded in $B(\tilde{y},\frac{1}{2})$, where  $\ds \tilde{y}= \lim_{\eps \rightarrow 0} \tilde{x}_{i_\eps,\eps }$. We thus obtain thanks to Harnack's inequality that $\tue(\tilde{x}_{i_\eps ,\eps})\rightarrow 0 $ as $\eps\rightarrow 0$,  which is a contradiction with the first or the second  assertion of Claim \ref{iso1}.

Thus we have proved that for every sequence $j_\eps$ such that $d(x_{j_\eps ,\eps}, x_{1,\eps}) = \mathcal O(\de)$, $\tue(\tilde{x}_{j_\eps ,\eps})$ is bounded. This proves that $\left(\tue\right)$ is uniformly bounded in a neighborhood of any finite subset of $S$. But thanks to Claim \ref{iso1}, $\tue$ is bounded in any compact subset of $\Omega_{0}\setminus S$. This clearly proves that $\tue$ is uniformly bounded on any compact of $\Omega_0$. Then, by standard elliptic theory, $\tue \rightarrow U$ in $C^{1}_{loc}(\Omega_0)$ as $\eps \rightarrow 0$, where $U$ is a nonnegative solution of
\be
-\Delta U = U^\frac{N+2}{N-2} \hbox{ in } \Omega_0 \hskip.1cm.
\ee 
But, thanks to the first or second assertion of Claim \ref{iso1}, we know that $U(0) \geq 1 $, hence we have necessarily that  $\Omega_0=\R^N$, and thus  $U$ possesses at least two critical points, namely $0$ and $\ds \check{x}_2 =\lim_{\eps \rightarrow 0}  \check{x}_{2,\eps}$. Thanks to the classification of Caffarelli--Gidas--Spruck \cite{CGS}, this is impossible. This ends the proof of (\ref{eq2.11}).

\medskip We are now going to consider two cases, depending on $\Omega_0$. 

\medskip {\it Case 1 : $\Omega_0= \R^N$.  } In this case, up to a subsequence, $d_\eps=d(x_{1,\eps},x_{2,\eps})$ and $\ds S=\{0,\tilde{x}_2=\lim_{\eps \rightarrow 0} \tilde{x}_{2,\eps}, \dots  \}$ contains at least two points. Applying Proposition \ref{estim} with $x_\eps = \tilde{x}_{i,\eps}$ and $\rho_{\eps} = \frac{\de}{3}$, we obtain that 
\be
\tue(0)\tue(x) \rightarrow H= \frac{1}{\vert x \vert^{N-2}} +\frac{\lambda_{2}}{\vert x-\tilde{x}_2  \vert^{N-2}}+ \tilde{b} \hbox{ in }  C^{1}_{loc} (\R^N \setminus S) \hbox{ as } \eps\rightarrow 0
\ee
where $\tilde{b}$ is a harmonic function in $\Omega_{0}\setminus \{S\setminus \{0, \check{x}_2 \}\}$, and $\lambda_2 >0$. Moreover $\tilde{b}(0)\leq -\lambda_2$. We prove in the following that $\tilde{b}$ is nonnegative, which will give a contradiction and end the study of this case. To check that $\tilde{b}$ is nonnegative, for any positive number $r$, we rewrite $H$ as 
\be
H= \sum_{\tilde{x}_i \in S\cap B(0,r)} \frac{\lambda_{i}}{\vert x-\tilde{x}_i \vert^{N-2}} + \hat{b}_r,
\ee
where $\lambda_i>0$. Then, taking $R>r$ large enough, we get  that  $\hat{b}_r > \frac{-1}{r^{N-2}}$ on $\partial B(0,R)$. Moreover, for any $\tilde{x}_j \in B(0,R)\setminus B(0,r)$, there exist a neighborhood $V_{j,r}$ of $\tilde{x}_j$ such that $ \hat{b}_r >0$ on $V_{j,r} $. Thanks to the maximum principle, $\hat{b}_r > \frac{-1}{r^{N-2}}$ on $B(0,R)$, hence it is decreasing and lower bounded, then $\hat{b}_r \rightarrow \hat{b}$ on every compact set as $r \rightarrow +\infty$, we get that $\ds H=  \sum_{\tilde{x}_i \in S} \frac{\lambda_{i}}{\vert x-\tilde{x}_i \vert^{N-2}} + \hat{b}$ with $\hat{b}\geq 0$, which proves that $\tilde{b}\geq 0$. This is the contradiction we were looking for, and this ends the proof of Claim \ref{iso4} in this first case.

\medskip {\it Case 2 : $\Omega_0= ]-\infty, d[\times \R^{N-1}$.  } We still denote $S=\{0=\tilde{x}_1, \tilde{x}_2, \dots \}$ and we apply Proposition \ref{estim} with $ x_\eps= x_{i,\eps}$ and $\rho_\eps= \frac{d_\eps}{3}$ to get that
\be
\tue(0)\tue(x) \rightarrow H= \sum_{\tilde{x}_i \in S}  \frac{\lambda_i}{\vert x-\tilde{x}_i \vert^{N-2}} + \tilde{b} \hbox{ in } C^{1}_{loc} (\Omega_0 \setminus S)\, ,
\ee
 where $\lambda_i>0$, and $\tilde{b}$ is some harmonic function in $\Omega_0$. We extend $H$ to $\R^N$ by setting
$$
\hat{H}(x) = \left\{
\begin{array}{ll}
H(x) &\hbox{ if } x_1 \leq d, \\
-H(s(x)) &\hbox{ otherwise, }
\end{array}\right.
$$
where $s$ is the reflection with respect to the hyperplane $ \{ d\} \times \R^{N-1} $.  We also extend $\tilde{b}$ by setting  
\be
\hat{H} =  \sum_{\tilde{x}_i \in S}  \left( \frac{\lambda_i}{\vert x-\tilde{x}_i \vert^{N-2}} -  \frac{\lambda_i}{\vert s(x)-\tilde{x}_i\vert^{N-2}} \right) +\hat{b}\hskip.1cm.
\ee
It is clear that $\hat{b}$ is harmonic on $\R^N$ and satisfies $\hat{b} \geq 0$ in $\Omega_0$ and $\hat{b}\le 0$ in $\R^N\setminus \Omega_0$. This can be proved as in Case 1. For $\mathcal{G}_R$ the Green function of the Laplacian on the ball $B(0, R)$ centered in $0$ with radius $R$, we get thanks to the Green representation formula that 
\be
\hat{b} (x) = \int_{\partial B(0,R)} \partial_\nu \mathcal{G}_R (x,y) \hat{b}(y) d\sigma \, .
\ee 
Since 
$$\partial_\nu {\mathcal G}_R\left(x,y\right)= \frac{R^2-\vert x\vert^2}{\omega_{N-1} R \left\vert x-y\right\vert^N} \qquad \text{ on } \partial B(0,R), $$
this gives that 
\be
\partial_1 \hat{b} (0) = \frac{N}{\omega_{N-1} R^{N}} \int_{\partial B(0,R)} y_1 \hat{b}(y) d\sigma \hskip.1cm.
\ee 
Now we decompose $\partial B(0,R)$ into three sets, namely 
\begincal
A&=&\{y\in  \partial B(0,R)\hbox{ s.t. }  y_1 \geq d \}\hskip.1cm,\\ 
B&=&\{ y\in  \partial B(0,R) \hbox{ s.t. } 0 \leq y_1\leq d \}\hskip.1cm,\\
C&=&\{ y\in \partial B(0,R) \hbox{ s.t. } y_1\leq 0 \}\hskip.1cm.
\fincal 
In $A$ and $B$, we have that $y_1 \hat{b}(y) \leq d \hat{b}(y)$, and in $C$, we have that $y_1 \hat{b}(y) \leq 0$. Since $\hat{b}\ge 0$ in $C$, we arrive at 
\be
\partial_1 \hat{b} (0) \leq \frac{Nd}{\omega_{N-1} R^{N}} \int_{A\cup B}  \hat{b}(y) d\sigma \leq  \frac{Nd}{\omega_2 R^{N}} \int_{\partial B(0,R)} \hat{b}(y) d\sigma = \frac{Nd \hat{b}(0)}{R}\hskip.1cm.
\ee 
Passing to the limit $R\rightarrow +\infty$ gives that $\partial_1 \hat{b}(0) \leq 0$. In order to obtain a contradiction,  we rewrite $H$ in a neighborhood of $0$ as
 \be
 H(x) = \frac{1}{\vert x \vert^{N-2}} + \check{b}(x)\, , 
 \ee 
 where  
 \be
 \check{b}(x)= \hat{b}(x) -\frac{1}{\vert s(x) \vert^{N-2} } + \sum_{\check{x}_i \in S \setminus \{0\}} \lambda_i \left( \frac{1}{\vert x-\check{x}_i \vert^{N-2}} -  \frac{1}{\vert s(x) - \check{x}_i \vert^{N-2}} \right) \hskip.1cm. 
 \ee
As is easily checked, $\partial_1  \check{b}(0) <0$, which is a contradiction with Proposition \ref{estim}. This ends the proof of  Claim \ref{iso4} in this second case.\\

\begin{proof}
[Proof of Proposition \ref{proposition preliminaries multi bis}]
It only remains to prove (v) and (vi) of Proposition \ref{proposition preliminaries multi bis}. Assertion (vi) is true locally around each concentration point by applying the first part of  Proposition \ref{estim}, and extending it to the whole domain using Harnack's inequality. Finally (v) follows directly from (vi). Indeed, all the $\mu_{i,\eps}$ are comparable by Harnack's inequality, then multiplying the equation by $\mu_{1,\eps}^{-\frac{N-2}{2}}$ and passing to the limit thanks to (vi) gives the desired result.
\end{proof} 

\section{Necessity of coercivity}
\label{Appcoer}
In this section, we briefly recall why the operator $-\Delta + h$ is necessarily coercive as soon as there exists a blowing-up sequence satisfying \eqref{eq1.1}.

\begin{lemma} If there exists $u \in C^{2,\eta}_0(\Omega)$ such that $u>0$ and $-\Delta u + h u >0$ on $\Omega$, then $-\Delta + h$ is coercive.
\end{lemma}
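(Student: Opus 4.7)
The plan is to reduce coercivity to positivity of the principal Dirichlet eigenvalue of the self-adjoint operator $-\Delta + h$ on $H^1_0(\Omega)$. Since $\Omega$ is bounded and $h$ is continuous on $\overline{\Omega}$, this eigenvalue $\lambda_1$ is well-defined and, by the Krein--Rutman theorem (or equivalently the strong maximum principle combined with the min-max principle), it is simple and admits a principal eigenfunction $\varphi_1 \in H^1_0(\Omega) \cap C^2(\Omega)$ with $\varphi_1 > 0$ in $\Omega$ and $\varphi_1 = 0$ on $\partial \Omega$.

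The first step is to prove that $\lambda_1 > 0$ by comparing the hypothesized supersolution $u$ with $\varphi_1$. Multiplying the strict inequality $-\Delta u + h u > 0$ by $\varphi_1 \geq 0$, integrating over $\Omega$, and integrating by parts (the boundary terms vanish because $u, \varphi_1 \in C^0(\overline{\Omega})$ both vanish on $\partial \Omega$) gives
$$\int_\Omega \bigl(\nabla u \cdot \nabla \varphi_1 + h u \varphi_1\bigr) \, dx > 0.$$
On the other hand, testing the eigenvalue equation $-\Delta \varphi_1 + h \varphi_1 = \lambda_1 \varphi_1$ against $u$ and integrating by parts produces the same left-hand side, now equal to $\lambda_1 \int_\Omega u \varphi_1 \, dx$. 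Since $u \varphi_1 > 0$ on $\Omega$, the integral $\int_\Omega u \varphi_1$ is strictly positive, forcing $\lambda_1 > 0$.

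To upgrade spectral positivity to $H^1_0$-coercivity, I would combine the variational bound $\int_\Omega (|\nabla \varphi|^2 + h \varphi^2) \geq \lambda_1 \|\varphi\|_{L^2}^2$ with the trivial estimate $\int_\Omega h \varphi^2 \geq - \|h^-\|_\infty \|\varphi\|_{L^2}^2$ via a convex combination. Writing, for any $\theta \in (0,1)$,
$$\int_\Omega |\nabla \varphi|^2 + h \varphi^2 \, dx \geq \theta \lambda_1 \|\varphi\|_{L^2}^2 + (1-\theta) \int_\Omega |\nabla \varphi|^2 \, dx - (1-\theta) \|h^-\|_\infty \|\varphi\|_{L^2}^2,$$
and choosing $\theta = \|h^-\|_\infty / (\lambda_1 + \|h^-\|_\infty)$ so that the $L^2$-contribution vanishes, yields
$$\int_\Omega |\nabla \varphi|^2 + h \varphi^2 \, dx \geq \frac{\lambda_1}{\lambda_1 + \|h^-\|_\infty} \int_\Omega |\nabla \varphi|^2 \, dx$$
for every $\varphi \in H^1_0(\Omega)$, which is the required coercivity (if $h \geq 0$ the conclusion is immediate).

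There is no genuine obstacle: the only subtlety lies in the compatibility of $u$ and $\varphi_1$ for the pairing argument, which is automatic here since $u \in C^{2,\eta}_0(\Omega)$ and the principal eigenfunction of $-\Delta + h$ on a bounded domain with continuous potential is smooth inside $\Omega$ and continuous up to $\partial \Omega$ where it vanishes.
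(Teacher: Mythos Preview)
Your proof is correct. The paper does not actually give an argument for this lemma: it simply refers to Appendix~B of \cite{Druet2004} (written for compact manifolds) and asserts that the proof carries over verbatim to a bounded domain with Dirichlet boundary condition. Your route---pairing the strict supersolution $u$ with the principal Dirichlet eigenfunction $\varphi_1$ to force $\lambda_1>0$, then interpolating between the spectral bound and the trivial lower bound on $\int h\varphi^2$ to recover control of $\int|\nabla\varphi|^2$---is the standard self-contained way to prove this and is almost certainly what the cited appendix contains. In short, you have supplied the details the paper chose to outsource.
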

\begin{proof}
See Appendix B of \cite{Druet2004} for the case where $\Omega$ is a compact manifold. The proof applies verbatim for a domain with Dirichlet boundary condition.
\end{proof}

In particular, the operator $-\Delta + h_\eps$ must be coercive for every $\eps > 0$. But in fact, $-\Delta + h$ must also be coercive under our assumption. Indeed, this is proved in Appendix B of \cite{Druet2004}, when $\Omega$ is a compact manifold and under the assumption that
there exists a finite number of sequences $(x_i^\eps)_{1\leq i\leq k}\in \Omega $ and $\mu_i^\eps \rightarrow 0 $
such that
$$\frac{1}{C}\sum_{i=1}^k B_{i,\eps} \leq u_\eps \leq C \sum_{i=1}^k B_{i,\eps} $$
for some $C >0$, where $B_{i,\eps}(x)=B\left( \frac{x-x_\eps^i}{\mu_\eps^i} \right)$. This hypothesis is clearly verified thanks to Proposition \ref{proposition preliminaries multi bis}. Now the proof in the domain case with Dirichlet boundary data follows verbatim the one presented in Appendix B of \cite{Druet2004}.
 
\section{Harnack's inequality}

\begin{lemma}
\label{harn}
Let $u_\eps$  satisfy the hypotheses of Proposition \ref{proposition preliminaries multi bis}. Then there exists $C>0$ depending only on $C_0$ and $\Vert h\Vert_\infty$ such that  
\beq
\frac{1}{C} \max_{\partial B(x_\eps, r)} \left(\ue+r\left\vert \nabla u_\eps\right\vert\right) \leq \frac{1}{\omega_{N-1} r^{N-1}} \int_{\partial B({x_\eps},r)} u_\eps d\sigma \leq C \min_{\partial B(x_\eps, r)}\ue  
\eeq
for all $r\in[0 , \frac{5}{2} \rho_\eps ]$ and all $ \eps>0$.
\end{lemma}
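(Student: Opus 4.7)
The plan is to reduce the claim to the classical De Giorgi--Nash--Moser Harnack inequality by rescaling. For fixed $r \in (0, \tfrac{5}{2}\rho_\eps]$ I would introduce
\[ v(y) := r^{\frac{N-2}{2}} u_\eps(x_\eps + r y), \qquad y \in B(0, 3\rho_\eps/r), \]
and compute that $v$ satisfies
\[ -\Delta v + r^2 h_\eps(x_\eps+ry)\, v = N(N-2)\, v^{\frac{N+2}{N-2}} \]
on its domain, which always contains at least $B(0, 6/5)$ since $3\rho_\eps/r \ge 6/5$. The hypothesis \eqref{eq1.5}, rewritten in the $y$ variable, becomes $|y|^{\frac{N-2}{2}} v(y) \le C_0$, so on the fixed annulus $A := \{1/2 \le |y| \le 3/2\}$ one has the uniform bound $v \le C(C_0)$.

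Next, I would rewrite the equation as the linear equation $-\Delta v + a(y)\, v = 0$ with potential $a(y) := r^2 h_\eps(x_\eps+ry) - N(N-2) v(y)^{\frac{4}{N-2}}$. On the annulus $A$, the quantity $\|a\|_{L^\infty(A)}$ is bounded by a constant depending only on $C_0$, $\|h\|_\infty$ and $\mathrm{diam}(\Omega)$ (since $r \le \tfrac{5}{2}\rho_\eps \le C\,\mathrm{diam}(\Omega)$). Because $v>0$, the standard interior Harnack inequality (e.g.\ Gilbarg--Trudinger, Theorem 8.20) applied on the connected set $\partial B(0,1) \subset A$ yields
\[ \sup_{\partial B(0,1)} v \le C\, \inf_{\partial B(0,1)} v \]
with $C$ depending only on $C_0$ and $\|h\|_\infty$. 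Standard interior elliptic regularity (with the $L^\infty$ bound on $v$ and $a$) then provides $\sup_{\partial B(0,1)} |\nabla v| \le C\, \inf_{\partial B(0,1)} v$ as well.

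Finally I would unscale. From the identities $u_\eps(x_\eps + ry) = r^{-\frac{N-2}{2}} v(y)$, $r|\nabla u_\eps|(x_\eps + ry) = r^{-\frac{N-2}{2}} |\nabla v|(y)$ and
\[ \frac{1}{\omega_{N-1} r^{N-1}} \int_{\partial B(x_\eps,r)} u_\eps\, d\sigma = \frac{r^{-\frac{N-2}{2}}}{\omega_{N-1}} \int_{\partial B(0,1)} v\, d\sigma, \]
the inequality on $\partial B(0,1)$ for $v$ translates verbatim into the claimed inequality for $u_\eps$ on $\partial B(x_\eps, r)$. The only point deserving care is that the Harnack constant stays uniform as $r \to 0$ or as $\eps \to 0$; this is guaranteed because the rescaling was chosen precisely so that $\|a\|_{L^\infty(A)}$ is controlled independently of $r$ and $\eps$ by the universal constants $C_0$, $\|h\|_\infty$ and $\mathrm{diam}(\Omega)$.
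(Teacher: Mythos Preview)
Your proof is correct and follows essentially the same strategy as the paper: rescale by $r$, use the weak estimate \eqref{eq1.5} to get a uniform $L^\infty$ bound on a fixed annulus, apply a standard Harnack inequality there, then add the gradient bound by interior elliptic estimates and unscale. The paper phrases the Harnack step slightly differently (it keeps the nonlinearity as a right-hand side, applies the Moser--Harnack inequality on small balls, absorbs the source term by choosing the ball radius small, and then uses a covering argument), whereas you absorb $N(N-2)v^{4/(N-2)}$ into a bounded zero-order coefficient and invoke Harnack once; both are standard and equivalent.

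One small inconsistency: you note that the rescaled domain contains at least $B(0,6/5)$, but then work on the annulus $A=\{1/2\le |y|\le 3/2\}$, whose outer radius exceeds $6/5$. Simply replace $A$ by a thinner annulus around $\partial B(0,1)$ compactly contained in $B(0,6/5)$ (e.g.\ $\{5/6\le|y|\le 7/6\}$); nothing else changes.
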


The proof follows \cite[Lemma 1.3]{DruetHebey09}.

\begin{proof}  Let $0  <r_\epsilon < \frac{5}{2} \rho_\eps $.
We set  
\be
\tue (x)= r_\eps^{\frac{N-2}{2}}  u_\eps(\tilde{x}_\eps + r_\eps x) 
\ee
which verifies
\beq
-\Delta \tue +r_\eps^2 \tilde{h}_\eps \tue =N(N-2)\tue ^\frac{N+2}{N-2} \hbox{ in } B\left(0, \frac{\rho_\eps}{r_\eps}\right)\hskip.1cm, 
\eeq
where $\tilde{h}_\eps= h\left(\tilde{x}_\eps + r_\eps x\right)$. Thanks to \eqref{eq1.5}, we have 
$$ \tue \leq \frac{C_0}{\vert x\vert^{N-2}} ,$$
in particular $\tue$ is uniformly bounded on $B(0,2)\setminus B(0,\frac{1}{2})$. Hence, applying the Moser--Harnack inequality  \cite[Theorem 4.17]{HanLin}, we have for all $x\in  B(0,3/2)\setminus B(0,\frac{2}{3})$ and $0<r < \frac{1}{6}$ that
$$\max_{B(x,r)} \tue \leq C\left( \min_{B(x,r/2)} \tue + r \Vert \tue \Vert_\infty \Vert -r_\eps^2 \tilde{h}_\eps  +N(N-2)\tue ^\frac{4}{N-2} \Vert_N \right),$$
with $C>0$ depending only on $N$. Then taking $r$ small enough depending only on $C_0$ and $\Vert h_\infty \Vert_\infty$, we have 
$$\max_{B(x,r)} \tue \leq  C \min_{B(x,r/2)} \tue .$$
Then using a covering argument, we get 
$$\max_{B(0,5/4)\setminus B(0,4/5)} \tue \leq  C \min_{B(0,5/4) \setminus B(0,4/5)} \tue .$$
Finally, using standard elliptic theory,
$$\max_{B(0,7/6)\setminus B(0,6/7)} \vert \nabla \tue\vert  \leq C \max_{B(0,7/6)\setminus B(0,6/7)} \tue ,$$
which achieves the proof.
\end{proof}

\section{General Poho\v{z}aev's identities}
\label{Pohozaevidentities}

For the sake of completeness, we derive here several forms of the classical Poho\v{z}aev identity \cite{Pohozaev} we used in this paper. Assume that $u $ is a $C^2$ solution of
 \be
-\Delta u = N(N-2)u^\frac{N+2}{N-2} -hu \hbox{ in } \Omega \hskip.1cm.
\ee
Multiplying this equation by $\langle x,\nabla u \rangle$ and integrating by parts, one easily gets that 
\beq
 \label{pohozaev1}
 \frac{1}{2} \io \left( (N-2) h u^2 +  h \langle x,\nabla u^2 \rangle \right) dx = 
 B_1 +B_2 ,
 \eeq
 where 
 \be
 \begin{split}
 B_1&=  \ibo \left( \langle x, \nabla u \rangle \partial_\nu u  + \frac{N-2}{2} u\partial_\nu u- \langle x,\nu \rangle \frac{\vert \nabla u\vert^2}{2} \right)d\sigma \hbox{ and }\\
B_2 &= \frac{(N-2)^2}{2}\ibo \langle x,\nu \rangle   \frac{u^{2^*}}{2^*}  d\sigma \hskip.1cm.
 \end{split}
 \ee
Hence, if $u=0$ on $\partial\Omega$, we get that 
\beq
\label{pohozaev2}
\io h\left( (N-2)u^2 + \langle x,  \nabla u^2 \rangle \right) dx = 
\ibo  \langle x,\nu \rangle \left(\partial_\nu u \right)^{2} d\sigma \hskip.1cm.
\eeq
Integrating by parts again, we get the Poho\v{z}aev identity in its usual form~: 
\beq
\label{pohozaev3} 
\io \left( h + \frac{\langle x,\nabla h \rangle}{2} \right)u^2 dx = -\frac{1}{2}
\ibo  \langle x,\nu \rangle \left(\partial_\nu u \right)^{2} d\sigma \hskip.1cm.
\eeq 
In a similar way, multiplying the equation by $\nabla u$ and integrating by parts, one can derive the following Poho\v{z}aev's identity~:
\beq
\label{pohozaev4}
\ibo \left( \frac{\vert \nabla u \vert^2}{2} \nu  - \partial_\nu u \nabla u  -   \frac{(N-2)^2}{2}u^{2^*}\nu \right) d\sigma  = -\io h \frac{\nabla u^2 }{2} dx\hskip.1cm.
\eeq 

\section{A general simple lemma on functions}\label{generallemma}


\begin{lemma}
\label{iso0} 
Let $\Omega$ be a smooth bounded domain of $\R^N$ and $u\in C_0^1\left(\Omega\right)$  positive on $\Omega$. Assume that 
$$ K_u := \{x\in \Omega \hbox{ s.t. } \nabla u (x)=0 \hbox{ and } d(x,\partial\Omega) u^\frac{2}{N-2}(x)\geq 1 \} $$
is non-empty. 

Then there exist $n \in \N^{*}$ and $n$ points of $K_u$, denoted by $(x_{1},\dots, x_{n})$, such that 
\be
\begin{split}
&\vert x_{i}-x_{j}\vert u(x_{i})^\frac{2}{N-2}  \geq 1 \hbox{ for all } i \not= j \in  [1,n] 
\end{split}
\ee
and 
\be
\left( \min_{i\in [1,n]} \vert x_{i} -x \vert  \right) u(x)^\frac{2}{N-2} \leq 1 \qquad \text{ for all } x \in K_u. 
\ee
\end{lemma}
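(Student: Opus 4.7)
The plan is a standard greedy selection ordered by decreasing value of $u$. First, I would note that $K_u$ is compact: since $u \in C^1_0(\Omega)$ is continuous on $\overline{\Omega}$ with $u|_{\partial\Omega}=0$, it is bounded, so the defining inequality forces $d(x,\partial\Omega) \geq \|u\|_\infty^{-\frac{2}{N-2}} > 0$, keeping $K_u$ inside a compact subset of $\Omega$; moreover $K_u$ is closed by continuity of $\nabla u$, $u$, and $x \mapsto d(x,\partial\Omega)$.

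I would set $x_1 \in K_u$ to be a maximizer of $u$ (exists by continuity and compactness) and, having chosen $x_1,\dots,x_k$, define
\[
K_u^{(k)} := \Bigl\{ y \in K_u : |y - x_i|\, u(y)^{\frac{2}{N-2}} \geq 1 \text{ for all } i \leq k \Bigr\},
\]
which is compact as a closed subset of $K_u$. If $K_u^{(k)}$ is empty, stop and set $n=k$; otherwise pick $x_{k+1}$ to be a maximizer of $u$ on $K_u^{(k)}$. Because $x_{k+1} \in K_u^{(k)} \subset K_u^{(k-1)}$ while $x_k$ already maximizes $u$ on $K_u^{(k-1)}$, the sequence $(u(x_k))$ is non-increasing, so $u(x_k)^{-\frac{2}{N-2}} \geq u(x_1)^{-\frac{2}{N-2}} =: c > 0$. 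The membership $x_{k+1} \in K_u^{(k)}$ then yields $|x_{k+1} - x_i| \geq u(x_{k+1})^{-\frac{2}{N-2}} \geq c$ for every $i \leq k$, so the selected points are pairwise $c$-separated inside the compact set $K_u$; hence the procedure must terminate in finitely many steps.

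It remains to verify the two claimed properties for the terminating list $(x_1,\dots,x_n)$. For $i<j$, applying the defining condition of $K_u^{(j-1)}$ at $y=x_j$ yields $|x_i - x_j|\, u(x_j)^{\frac{2}{N-2}} \geq 1$, and since monotonicity of $(u(x_k))$ gives $u(x_i) \geq u(x_j)$, the same inequality holds with $u(x_i)$ in place of $u(x_j)$; the case $i>j$ is symmetric. For the second property, fix $x \in K_u$; if $x \neq x_i$ for every $i$, let $m$ be the smallest index with $x \notin K_u^{(m)}$, which exists since $K_u^{(n)} = \emptyset$ and $x \in K_u = K_u^{(0)}$. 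By minimality of $m$, only the $m$-th constraint is newly violated, giving $|x - x_m|\, u(x)^{\frac{2}{N-2}} < 1$, and hence $\min_i |x - x_i|\, u(x)^{\frac{2}{N-2}} \leq 1$.

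No step is a real obstacle; the mild subtlety in the design is the choice to use $u(y)^{\frac{2}{N-2}}$ (the value at the candidate new point) rather than $u(x_i)^{\frac{2}{N-2}}$ in the constraint defining $K_u^{(k)}$. It is precisely this asymmetry which allows the first conclusion to be recovered for both orderings $i<j$ and $i>j$ via monotonicity of $(u(x_k))$.
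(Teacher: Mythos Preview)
Your proof is correct and follows essentially the same approach as the paper: the same greedy selection of maximizers of $u$ over the nested sets $K_u^{(k)}$ (which are the paper's $K_k$), the same monotonicity argument to upgrade the separation inequality from $u(x_j)$ to $u(x_i)$ when $i<j$, and the same ``first failing constraint'' argument for the covering property. Your justification of the compactness of $K_u$ is in fact slightly more careful than the paper's, which simply asserts it.
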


\begin{proof}
Let $K_0 := K_u$. By assumption, $K_0$ is non-empty. Moreover, it is clear that $K_0$ is compact. We let $x_1\in K_0 $ and $K_1 \subset K_0$ be such that 
\be
u(x_1)= \max_{K_0} u
\ee 
and
\be 
K_1=\left\{ x\in K_0 \hbox{ s.t. } \vert x_1 -x\vert u(x)^\frac{2}{N-2} \geq 1 \right\} \hskip.1cm.
\ee
Then we proceed by induction. Assume that we have constructed $K_0 \supset \dots \supset K_p$ and $x_1, \dots, x_p$ such that $x_i \in K_{i-1}$ for all $i\in [1,p]$. If $K_p \neq \emptyset$, we let $x_{p+1}\in K_p$ be such that 
\be
u(x_{p+1})= \max_{K_p} u
\ee 
and we define $K_{p+1} \subset K_{p}$ by
\beq
\label{K p definition}
K_{p+1}=\left\{ x\in K_p \hbox{ s.t. }  \min_{i\in [1,p+1]} \vert x - x_i \vert u(x)^\frac{2}{N-2} \geq 1 \right\} \hskip.1cm.
\eeq
We claim that for any $x_1,...,x_p$ constructed in this way, we have
\beq
\label{iso01}
\vert x_i - x_j \vert u (x_i)^\frac{2}{N-2} \geq 1 \hbox{ for all } i\not=j \in [1,p]. 
\eeq  
 We prove (\ref{iso01}) by induction. For $p = 1$, there is nothing to prove. Suppose now that \eqref{iso01} is true for some $p\geq1$ and that $K_p \neq \emptyset$. Since $x_{p+1} \in K_p$, by definition of $K_p$, we have 
 \begin{equation}
 \label{induction1}
  \vert x_{p+1} - x_i \vert u(x_{p+1})^\frac{2}{N-2} \geq 1 \quad \text{ for all } i \in [1, p]. 
\end{equation} 
 Moreover, for any $i \in [1,p]$, we have $K_{i-1} \supset K_{p}$, and hence $u(x_i) \geq u(x_{p+1})$, since $x_i$ and $x_{p+1}$ are defined to be the maxima of $u$ over these sets. In particular, $u(i) \geq u(x_{p+1})$. Thus \eqref{induction1} implies 
\[
  \vert x_{p+1} - x_i \vert u(x_{i})^\frac{2}{N-2} \geq 1 \quad \text{ for all } i \in [1, p]. 
  \]
By the induction assumption, \eqref{iso01} is already true when both $i$ and $j$ are in $[1,p]$. Thus we have proved \eqref{iso01} for all $i \neq j \in [1, p+1]$.

Next, we observe that \eqref{iso01} implies the lower bound $|x_i - x_j| \geq \frac{1}{\|u\|_{L^\infty(\Omega)}} > 0$. Hence, the construction of the $x_p$ must stop after finitely many steps because  $\Omega$ is bounded.

Thus, there is $n\in \N^{*}$ such that $K_n=\emptyset$. Fix any $x \in K_u$. We claim that
\beq
\label{iso04}
\left( \min_{i\in [1,n]} \vert x_{i} -x \vert  \right) u^\frac{2}{N-2}(x) \leq 1. 
\eeq
Together with (\ref{iso01}), this will end the proof of  the lemma.  Since $K_n=\emptyset$, there exists $p \in [1, n]$ such that $x\in  K_{p-1}$ and $x\not\in K_{p}$. By the definition \eqref{K p definition} of the set $K_p$, we must have
\be
 \min_{i\in [1,p]} \vert x - x_i \vert u(x)^\frac{2}{N-2} <  1\hskip.1cm.
\ee
Since trivially $\min_{i \in [1,n]} |x - x_i| \leq \min_{i \in [1,p]} |x - x_i|$, inequality \eqref{iso04} follows. As already explained, this proves the lemma.
\end{proof}

\bibliographystyle{plain}
\bibliography{brezis-peletier}

\end{document}